\documentclass{article}[a4paper]

\title{Topological Dualities for Modal Algebras}
\author{
Matthew Collinson\\ 
\small{School of Natural and Computing Sciences},
University of Aberdeen,\\
\small{Aberdeen, AB24 3UE, United Kingdom,}
\texttt{\small{matthew.collinson@abdn.ac.uk}}
}

\date{} 

\usepackage{a4wide}
\usepackage{graphicx}
\usepackage[pdftex]{pict2e}
\usepackage{amsmath}
\usepackage{amsfonts}
\usepackage{amssymb}
\usepackage{amsthm}
\usepackage{makecell}
\usepackage{mathrsfs} 
\usepackage{bandd2025}

\newtheorem{theorem}{Theorem}
\newtheorem{lemma}{Lemma}
\newtheorem{proposition}{Proposition}
\newtheorem{corollary}{Corollary}

\theoremstyle{definition}
\newtheorem{definition}{Definition}

\newcommand{\Ccal}{\mathcal{C}} 
\newcommand{\Dcal}{\mathcal{D}} 
\newcommand{\Fcal}{\mathcal{F}} 
\newcommand{\Ocal}{\mathcal{O}} 
\newcommand{\Pcal}{\mathcal{P}} 
\newcommand{\Ofrak}{\mathfrak{O}} 

\newcommand{\topcat}{\mathbf{Top}}

\newcommand{\relsp}{\mathbf{RelSp}}
\newcommand{\relspl}{\mathbf{RelSp^l}}
\newcommand{\relspc}{\mathbf{RelSp^c}}

\newcommand{\relspq}{\mathbf{RelSpq}} 
\newcommand{\relspqu}{\mathbf{RelSpq^u}}
\newcommand{\relspql}{\mathbf{RelSpq^l}}
\newcommand{\relspqc}{\mathbf{RelSpq^c}}
\newcommand{\eqspq}{\mathbf{EqSpq}}
\newcommand{\eqspqc}{\mathbf{EqSpq^c}}

\newcommand{\frm}{\mathbf{Frm}}
\newcommand{\mfrm}{\mathbf{MFrm}} 
\newcommand{\mfrms}{\mathbf{MFrm}^{\scalebox{.5}{\algbox\!\!\!\algdiamond}}}
\newcommand{\mfrmbs}{\mathbf{MFrm}^{\scalebox{.5}{\algbox}}}
\newcommand{\mfrmds}{\mathbf{MFrm}^{\scalebox{.5}{\algdiamond}}}
\newcommand{\eqfrm}{\mathbf{EqFrm}}

\newcommand{\lmfrmds}{\mathbf{LMFrm}^{\scalebox{.5}{\algdiamond}}}
\newcommand{\cmfrms}{\mathbf{CMFrm}^{\scalebox{.5}{\algbox\!\!\!\algdiamond}}}
\newcommand{\ceqfrm}{\mathbf{CEqFrm}}

\newcommand{\mdlat}{\mathbf{MDLat}}

\newcommand{\twobb}{\mathbf{2}}
\DeclareMathOperator{\pt}{\mathrm{pt}}
\DeclareMathOperator{\charop}{\nabla}
\newcommand{\Pbb}{\mathbb{P}}

\newcommand{\invim}{^{\leftarrow}} 
\newcommand{\dirim}{^{\rightarrow\!}}
\newcommand{\interiorop}[1]{{#1}^\circ}
\newcommand{\hash}{{\#}}
\DeclareMathOperator{\downset}{\downarrow}
\DeclareMathOperator{\upset}{\uparrow}
\DeclareMathOperator{\compacts}{K}
\DeclareMathOperator{\ideals}{Idl}
\DeclareMathOperator{\Irs}{\mathscr{I}}

\begin{document}

\maketitle

\begin{abstract}
    We display a family of Stone-type dualities linking categories of frames carrying pairs of modal operators to categories of spaces carrying a binary relation. Different notions of morphism used on the relational side lead to significant variations in the point construction. We show how the situation simplifies in the case of semicontinuous relations, allowing for straightforward correspondences between modal axioms and relational properties. 
\end{abstract}

\section{Introduction}

Heyting algebras carrying unary modal operators arise in the study of intuitionistic propositional logics with modal connectives, generalizing the better known case in which Boolean algebras with operators arise from classical propositional logics. Classical propositional logic has a particularly useful semantics given on sets carrying a binary relation (Kripke frames) while intuitionistic propositional logic has a semantics given on topological spaces. Various authors have suggested giving semantics to intuitionistic modal logics by using topological spaces that carry a binary relation on points (`relational spaces'). Soundness and completeness of intuitionistic propositional calculus in the topological semantics can be understood in the framework of the Stone duality between coherent (spectral) spaces and distributive lattices, underpinned by the adjunction between spaces and frames (locales). In the classical case, Goldblatt showed that Stone duality extended to the modal case, specifically by requiring  semicontinuity of the binary relations on frames \cite{gold76a}. 

Hilken \cite{hil96},\cite{hil00} produced an adjunction in the Heyting case, a corresponding duality, and an existence of points result for a category of spectral frames.  The adjunction operates at an extremely general level, linking algebras satisfying a minimal set of axioms to relational spaces with no constraints on the binary relation carried. 
On the spatial side, a continuous version of the p-morphisms  of modal logic is used. 
However, the result is somewhat fragile. 
It is then difficult to specialize within it to produce results connecting categories of frames satisfying additional axioms to categories of relational spaces satisfying additional conditions. This means both ordinary enforcement of topological separation properties and, critical to logical applications,  modal correspondence theory, linking modal axioms to relational properties. 
Some (but not all) of the difficulty, relates to the point construction, where it is necessary to identify a set of points among a larger set of `pre-points'.


Parts of the territory have been explored by other authors. In most cases, the emphasis was on logic and models, and not on the relationship between categories of algebras and spaces. Wijesekera \cite{wijesekera1990constructive} is a significant precursor containing many parts of the same constructions.  
Several papers explore related ideas in lax logic \cite{FM97PLL,AMPR01}. 
These papers do not make explicit the morphisms on the spatial side. 
 A notion with some similarities to continuous p-morphisms is studied under the name `topo-bisimulation'  \cite{aiello2003reasoning}, but this is designed for interpretation of classical S4. 



In this paper we study a family of results related to Hilken's, in an attempt to provide a more uniform and useful account of the landscape. We give a refinement of the continuous p-morphism notion, which we call continuous pq-morphism, defined by an additional property, the q-morphism property, dual to the continuous p-morphism property. The q-morphism property interacts usefully with upper-semicontinuity of relations in passing from spaces to frames. With such morphisms, the notion of point of a modal frame then gives more control of the properties of the relational space constructed. 
Continuous pq-morphisms give an appropriate notion of functional bisimulation \cite{popkorn}, for which the intended interpretation of a modal propositional language (corrresponding to the modal algebras) is bisimulation invariant.

We show how the constructions simplify in the presence of semicontinuity properties of the relation. Semicontinuity properties can be enforced when a relational space is constructed by placing suitable restrictions on pre-points. Specifically, well-behaved relational spaces exist as subspaces. For modal frames supporting two additional axioms, all points are shown to satisfy these restrictions. Given the semicontinuity on the relational side, it is perhaps unsurprising that these axioms are equivalent, in the presence of the other axioms, to identities used in Johnstone's  construction of the Vietoris powerlocale \cite{joh82b}.  
These additional axioms and semicontinuity assumptions are not universally accepted as appropriate, either where the perspective relates to foundational concerns around constructive modal logics or to specific modelling concerns. Nevertheless, they are appropriate in many applications and have strong computational motivation \cite{abr91b,plo81,rob86a}.
Related restrictions are made by Goldblatt   \cite{gold76a} and by Wijesekera \cite{wijesekera1990constructive} within the topological semantics for additional axioms considered by Ewald \cite{ewald86} and Plotkin and Stirling \cite{plostir}.

With the semicontinuity properties and associated point restrictions, the modal correspondence theory becomes tractable. We show how it works in the case of equivalence relations. Equivalence relations are often used to support classical S5 modal logics and have also been used in the semantics of intuitionistic S5 logics \cite{simpson94}. At the same time, there has been recent interest in topological semantics for modal epistemic logics, where the topology is induced from observable properties of some state space \cite{baltagVanBenthem2025}, giving rise to relational spaces and to topologically valued transition systems \cite{collinson2026tvts}.

Correspondence theory in the more general cases without semicontinuity assumptions remains to be understood. There have been significant recent advances in technique that have been developed and applied to other related modal theories and semantics \cite{conradie2014unified,zhao2023correspondence}. A further challenge is to find more explicit  descriptions of the set of modal frame points as a subset of the set of pre-points. For controlling topological properties of relational spaces constructed, an alternative two-structure approach has been suggested \cite{collinson2026tvts}.

In Section~\ref{sec:adjunction} we introduce the categories of relational spaces of interest and the corresponding categories of modal algebras, and provide the adjunctions between pairs of these categories. In Section~\ref{sec:duality}, we show how the adjunctions restrict to dualities in the images of the functors participating in each of the adjunctions. Section~\ref{sec:existence} provides the `existence of points' results that show that classes of spectral/coherent modal frames lie within the scope of certain of these dualities. 

\section{Adjunction}
\label{sec:adjunction}

\subsection{Overview}

We introduce categories of spaces and categories of algebraic structures. We provide a family of adjunctions, with each member of the family matching a category of spaces with a category of algebraic structures. The functors in each direction are embellished forms of the functors involved in the adjunction between topological spaces and frames. The embellishments involve appropriate detail to handle the relational structure on spaces and modal structure on algebras. For relevant background, the reader may consult the books of Johnstone \cite{stonespaces} and Popkorn \cite{popkorn}.

\subsection{Spaces}

\begin{definition}
A 
\emph{relational space}  
$(X, \Ocal , R)$ consists of a topological space 
$(X, \Ocal)$ with carrier set $X$ and family of open sets $\Ocal$, 
together with a binary relation 
$R \subseteq X \times X$.
\end{definition}

We will write $R$ using infix notation, and also write $R\dirim(x) = \{ x' \in X \mid x R x' \}$ for $x\in X$. The relation $R$ and space $(X, \Ocal, R)$ are \emph{serial} if $R\dirim(x) \neq \emptyset$ for all $x \in X$. Definition~\ref{definition:classicalOperators} introduces the classical box and diamond operators of modal logic that operate on the powerset of the carrier set.

\begin{definition}
\label{definition:classicalOperators}
    Let $(X, \Ocal , \mathcal{R})$ be a relational space. For each $U \subseteq X$, define the sets
    \begin{align*}
        \algboxclass U &= \{ x \in X \mid R\dirim(x) \subseteq U \} \\
        \algdiamondclass U &= \{ x \in X \mid R\dirim(x) \cap U \neq \emptyset \} . 
    \end{align*}

    The relation $R$ \emph{upper-semicontinuous (u.s.c.)} with respect to $\Ocal$ if $\algboxclass U$ is open, for each open $U \in \Ocal$. The relation $R$ is \emph{lower-semicontinuous (l.s.c.)} if $\algdiamondclass U$ is open, for each open $U \in \Ocal$. The relation $R$ is \emph{continuous} if it is both upper semicontinuous and lower-semicontinuous. We will also say that the relational space is lower-semicontinuous, upper-semicontinuous or continuous, respectively, in each of these cases. If the relation $R$ is both continuous and an equivalence relation, then we say that the relational space is an \emph{equivalence space}.
\end{definition}

For a function $f : X \longrightarrow Y$, we write the inverse image of any subset $U \subseteq Y$ as $f\invim (U) = \{ x\in X \mid f(x) \in U \}$. 

\begin{definition} 
Let 
$f:(X,\Ocal, R ) \rightarrow (Y, \Pcal , S )$ be a continuous function between relational spaces. 
Consider the following three conditions:
\begin{eqnarray}
\label{relmorph}
x R y & \Rightarrow & f(x) S f(y)  \\
\label{pmorph} 
 f(x) S y \in U & \Rightarrow &  
            \exists x' \in X. ~ x R x' ~ \& ~ x' \in f \invim (U) \\
\label{qmorph}
f(x)S y \notin U  & \Rightarrow &  
                  \exists x' \in X. \ xR x' \ \& \ x' \notin f \invim (U) .
\end{eqnarray}

The function $f$ is a \emph{continuous relational morphism} if it satisfies (\ref{relmorph}) for all $x,y \in X$.
The function $f$ is a \emph{continuous p-morphism} if it satisfies (\ref{relmorph}) and (\ref{pmorph}) for all $U \in \Pcal$, $x\in X$ and $y \in Y$.  
The function is $f$ a \emph{(continuous) pq-morphism} if it is a continuous p-morphism and it also satisfies (\ref{qmorph}) for all $U \in \Pcal$, $x \in X$ and $y \in Y$. 

The names and definitions of various categories of relational spaces that we use in this paper are collated in Table~\ref{table:spaces}. 
\begin{table}[ht]
\begin{center}
\begin{tabular}{|l|l|l|}
\hline
Name    & Objects  & Morphisms \\ 
\hline
$\relsp$   & relational spaces & continuous p-morphisms \\
$\relspl$   & l.s.c. relational spaces & continuous p-morphisms\\
$\relspc$   & continuous relational spaces & continuous p-morphisms\\
$\relspq$  & relational spaces & continuous pq-morphisms \\ 
$\relspqu$ & u.s.c. relational spaces & continuous pq-morphisms \\ 
$\relspql$ & l.s.c. relational spaces & continuous pq-morphisms \\
$\relspqc$ & continuous relational spaces & continuous pq-morphisms \\
$\eqspq$  & equivalence spaces & continuous pq-morphisms \\
\hline
\end{tabular}
\caption{Categories of relational spaces}
\label{table:spaces}
\end{center}
\end{table}
\end{definition}

\subsection{From Spaces to Frames}

\begin{definition}
A \emph{modal frame} 
$(A , \algbox , \algdiamond )$ 
consists of a frame 
$A = (A, \leq , \bigvee , \wedge , \top , \bot)$ together with unary operators $\algbox:A \longrightarrow A$ and $\algdiamond: A  \longrightarrow A$ that are monotonic with respect to $\leq$ and that satisfy the following, for all $a,b \in A$:
\begin{eqnarray}
\label{m1} 
    \top  & \leq & \algbox \top      \\
\label{m2} 
    \algbox a \wedge \algbox b  & \leq & \algbox (a \wedge b)  \\
\label{m3}
    \algbox a \wedge \algdiamond b & \leq & 
    \algdiamond (a \wedge b)  \\
\label{m4}
    \algdiamond \bot & \leq    & \bot                  .
\end{eqnarray}

A modal frame is \emph{lower} if it additionally satisfies 
\begin{equation}
\label{axiom:diamondpreservesbinaryjoin}
\algdiamond (a \vee b) \leq 
\algdiamond a \vee \algdiamond b  
\end{equation}
for all $a, b \in A$.
%
It is \emph{convex} if it is lower and the following axiom holds
\begin{align}
     \label{axiom:lscboxveemixed}
    \algbox (a \vee b) \leq \algbox a \vee \algdiamond b
    %
\end{align}
for all $a,b \in A$.

A modal frame $A$ is \emph{serial} if 
\begin{align}
    \label{ax:serial}
    \algbox a \leq \algdiamond a 
\end{align}
for all $a \in A$.
\end{definition}

The axioms (\ref{m1}),(\ref{m2}),(\ref{m3}),(\ref{m4}),(\ref{axiom:diamondpreservesbinaryjoin}),(\ref{axiom:lscboxveemixed}) are equivalent to the part of the identities defining the Vietoris powerlocale that use only finitary operations \cite{joh82b}. 
Axiom (\ref{axiom:lscboxveemixed}) has been used in other works on intuitionism and modal logic \cite{ewald86,plostir}.

\begin{definition}
    An \emph{equivalence frame} is a modal frame $(A , \algbox , \algdiamond )$ that additionally satisfies all of the following conditions for all $a \in A$:
    \begin{align}
        \label{reflexivitybox}
        \algbox a 
        & \leq 
        a\\
        \label{reflexivitydiamond}
        a   
        & \leq\algdiamond 
        a\\
        \label{transitivitybox}
        \algbox a 
        & \leq 
        \algbox \algbox a\\
        \label{transitivitydiamond}
        \algdiamond \algdiamond a 
        & \leq 
        \algdiamond a \\
        \label{symmetrydiamondbox}
        \algdiamond\algbox a 
        & \leq 
        a\\
        \label{symmetryboxdiamond}
        a 
        & \leq 
        \algbox\algdiamond a .
    \end{align}
\end{definition}
A system of axioms in logical form equivalent to 
(\ref{ax:serial}), 
(\ref{reflexivitybox}), 
(\ref{reflexivitydiamond}), 
(\ref{transitivitybox}), 
(\ref{transitivitydiamond}), 
(\ref{symmetrydiamondbox}), 
(\ref{symmetryboxdiamond}) appear in Simpson's thesis \cite{simpson94} in studying a version of intuitionistic S5 modal logic.

\begin{definition}
A \emph{modal frame morphism } 
$f:(A , \algbox , \algdiamond ) \longrightarrow (B ,  \algbox , \algdiamond )$ 
is a morphism of frames $f:A \longrightarrow B$, such that
\begin{align}
\label{boxmorphcond}
    f \circ \algbox & \leq \algbox \circ f \\
\label{dimorphcond} 
    f \circ \algdiamond & \leq \algdiamond \circ f . 
\end{align}
Here, the relation $\leq$ is the pointwise extension of the order 
on frames to an order on frame morphisms.
The morphism is \emph{box-strict} if (\ref{boxmorphcond}) is an equality, \emph{diamond-strict} if (\ref{dimorphcond}) is an equality and  
\emph{strict}  
if it is both box-strict and diamond-strict.

The names of various categories of frames with modal operators are displayed in Table~\ref{table:CatsOfFrames}.
\begin{table}[ht]
\begin{center}
\begin{tabular}{|l|l|l|}
\hline
Name    & Objects & Morphisms  \\ 
\hline
$\mfrm$ & modal frames & modal frame morphisms\\
$\mfrmbs$ & modal frames & box-strict modal frame morphisms\\
$\mfrmds$ & modal frames & diamond-strict modal frame morphisms\\
$\lmfrmds$ & lower modal frames & diamond-strict modal frame morphisms\\
$\mfrms$ & modal frames & strict modal frame morphisms \\
$\cmfrms$ & convex modal frames & strict modal frame morphisms\\
$\eqfrm$ & equivalence frames & strict modal frame morphisms\\
$\ceqfrm$ & convex equivalence frames & strict modal frame morphisms\\
\hline
\end{tabular}
\caption{Categories of Frames with Operators}
\label{table:CatsOfFrames}
\end{center}
\end{table}
\end{definition}

\begin{definition}
\label{def:OmegaAssignments}
    Given a relational space $(X , \Ocal , R )$, define 
    \begin{equation*}
        \Omega X = (\Ocal , \algbox , \algdiamond )
    \end{equation*} 
    to consist of the frame $\Ocal$ of open sets  together with the operations defined by 
    \begin{align}
    \label{intBoxInterp}
        \algbox U 
        &= 
        \interiorop{(\algboxclass U)} 
        \\
        \algdiamond U 
        & = 
        \interiorop{(\algdiamondclass U)}  
        \label{intDiaInterp} 
    \end{align}
    for all $U \in \Omega X$, where $\interiorop{(-)}$ denotes the topological interior operation.

    For a continuous function 
    $f:(X, \Ocal , R) \longrightarrow (Y, \Pcal , S)$, define 
    \begin{equation*}
        \Omega f = f\invim : \Pcal \longrightarrow \Ocal .
    \end{equation*} 
\end{definition}

Lemma~\ref{results:Omega} gathers together some important properties of modal frames and their morphisms arising from relational spaces. All parts are straightforward and we omit the proofs.

\begin{lemma}
\label{results:Omega}
Let $(X, \Ocal , R)$ be a relational space and $f:(X, \Ocal , R) \longrightarrow (Y, \Pcal , S)$ be a continuous function between relational spaces. The following then hold: 
\begin{enumerate}
\item $\Omega X$ is a modal frame. 


\item If $(X, \Ocal , R)$ is lower semicontinuous, then $\Omega X$ is a lower modal frame.

\item If $(X, \Ocal , R)$ is continuous, then $\Omega X$ is a convex frame.

\item    
If $(X, \Ocal , R)$ is an equivalence space, then $\Omega X$ is an equivalence frame. 

\item If $(X, \Ocal , R)$ is serial, then $\Omega X$ is serial.

\item If $f$ 
is a continuous relational morphism, then 
$\Omega f$ satisfies (\ref{boxmorphcond}). If $f$ satisfies the continuous p-morphism condition (\ref{pmorph}), then $\Omega f$ satisfies (\ref{dimorphcond}). If $f$ is a continuous p-morphism, then $\Omega f$ is a morphism of modal frames.

\item If $f$ is a continuous relational morphism and $(Y,\Pcal,S)$ is lower-semicontinuous, then $\Omega f$ is diamond-strict.

\item If $f$ satisfies the q-morphism property (\ref{qmorph}) and $(Y,\Pcal,S)$ is upper-semicontinuous, then $\Omega f$ is box-strict.
\end{enumerate}
\end{lemma}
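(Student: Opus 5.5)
The plan is to verify each item directly from the definitions, using three basic facts about the classical operators $\algboxclass$ and $\algdiamondclass$ on the powerset of $X$. First, $\algboxclass$ preserves arbitrary intersections. Second, $\algdiamondclass$ preserves arbitrary unions. Third, $\algboxclass U \cap \algdiamondclass V \subseteq \algdiamondclass(U\cap V)$, since if $R\dirim(x)\subseteq U$ and $x R y\in V$, then $y \in U \cap V$. Taking interiors is monotone, preserves finite intersections and fixes open sets. So $\algbox$ and $\algdiamond$ are monotone maps $\Ocal\to\Ocal$.

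For item 1: (\ref{m1}) holds because $\algboxclass X = X$. For (\ref{m2}) and (\ref{m3}), I intersect interiors: $\interiorop{(\algboxclass U)}\cap\interiorop{(\algboxclass V)} = \interiorop{(\algboxclass U\cap\algboxclass V)} = \interiorop{(\algboxclass(U\cap V))}$, and the mixed case works the same way using the third fact. For (\ref{m4}), note that $\algdiamondclass\emptyset=\emptyset$.

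For item 2: lower semicontinuity makes $\algdiamondclass U$ open, so $\algdiamond U = \algdiamondclass U$. Then (\ref{axiom:diamondpreservesbinaryjoin}) is just preservation of unions by $\algdiamondclass$.

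For item 3: continuity makes $\algbox U=\algboxclass U$ as well. For (\ref{axiom:lscboxveemixed}), take $x$ with $R\dirim(x)\subseteq U\cup V$. Either $R\dirim(x)$ meets $V$, or $R\dirim(x)\subseteq U$.

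For item 4: reflexivity gives $\algboxclass U\subseteq U\subseteq\algdiamondclass U$, which yields (\ref{reflexivitybox}) and (\ref{reflexivitydiamond}), since $U$ is open. For (\ref{transitivitybox}) and (\ref{transitivitydiamond}), continuity removes the interiors, and transitivity gives $\algboxclass U\subseteq\algboxclass\algboxclass U$ and $\algdiamondclass\algdiamondclass U\subseteq\algdiamondclass U$. For (\ref{symmetrydiamondbox}) and (\ref{symmetryboxdiamond}), use symmetry. If $x R y$ and $R\dirim(y)\subseteq U$, then $x\in U$. If $x\in U$ and $xRy$, then $yRx$, so $y\in\algdiamondclass U$.

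For item 5: if $R\dirim(x)\subseteq U$ and $R\dirim(x)\neq\emptyset$, then $x\in\algdiamondclass U$. So $\algboxclass U\subseteq\algdiamondclass U$, and taking interiors preserves this inclusion.

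For item 6: $f\invim$ is a frame morphism because $f$ is continuous. For (\ref{boxmorphcond}), the set $f\invim(\interiorop{(\algboxclass U)})$ is open. I will show it lies inside $\algboxclass f\invim(U)$, which then puts it inside the interior of that set. If $f(x)\in\algboxclass U$ and $xRx'$, then (\ref{relmorph}) gives $f(x)Sf(x')$, so $f(x')\in U$. For (\ref{dimorphcond}), (\ref{pmorph}) gives the analogous inclusion $f\invim(\algdiamondclass U)\subseteq\algdiamondclass f\invim(U)$. The last claim of item 6 then combines the two.

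Items 7 and 8 are the strictness claims, and they are where the hypotheses must be used carefully; I expect item 8 to be the main point to check.

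For item 7: $\algdiamond U=\algdiamondclass U$ in $Y$. By (\ref{relmorph}), $\algdiamondclass f\invim(U)\subseteq f\invim(\algdiamondclass U)$, because $xRx'$ with $f(x')\in U$ gives $f(x)Sf(x')$. Taking interiors, the right side is already open, so $\algdiamond f\invim(U)\subseteq f\invim(\algdiamond U)$. Combined with item 6 this gives equality. Item 6 needs (\ref{pmorph}), which I read as implicitly assumed wherever $\Omega f$ is a modal frame morphism. Otherwise I record only this reverse inequality as the content of strictness.

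For item 8: $\algbox U=\algboxclass U$ in $Y$. Let $x\in\algboxclass f\invim(U)$ and suppose $f(x)\notin\algboxclass U$. Then there is $y$ with $f(x)Sy\notin U$. By (\ref{qmorph}) there is $x'$ with $xRx'$ and $x'\notin f\invim(U)$, which is a contradiction. Hence $\algboxclass f\invim(U)\subseteq f\invim(\algboxclass U)$. Taking interiors and using that the right side is open gives $\algbox f\invim(U)\subseteq f\invim(\algbox U)$. Together with (\ref{boxmorphcond}) from item 6, this gives box-strictness.
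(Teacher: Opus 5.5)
Your proposal is correct. The paper omits this proof as straightforward, and your item-by-item check through $\algboxclass$, $\algdiamondclass$ and interiors is exactly the routine verification it intends. Your caveat on item 7 applies equally to item 8: the inclusion $f\invim(\algbox U)\subseteq\algbox f\invim(U)$ that you import from item 6 needs (\ref{relmorph}), which item 8 does not literally assume. Both strictness claims should therefore be read as the paper uses them, for continuous p-morphisms (pq-morphisms in item 8).
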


In the case of an equivalence frame arising from an equivalence space under the action of the functor $\Omega$, the two reflexivity axioms (\ref{reflexivitybox}) and  (\ref{reflexivitydiamond}) and one of the transitivity axioms (\ref{transitivitybox}) do not depend on continuity properties of the relation. However, the other equivalence frame axioms require topological assumptions 

\begin{lemma}
The assignments under $\Omega$ define functors with source and target categories as listed in Table~\ref{table:OmegaFunctorsNEW}. 
\begin{table}[ht]
\begin{center}
\begin{tabular}{|l|l|}
\hline
Source    & Target  \\ 
\hline
$\relsp$   & $\mfrm$ \\
$\relspl$  & $\lmfrmds$ \\
$\relspq$  & $\mfrm$ \\
$\relspql$ & $\lmfrmds$ \\
$\relspqu$ & $\mfrmbs$ \\ 
$\relspqc$ & $\cmfrms$ \\
$\eqspq$  & $\ceqfrm$ \\
\hline
\end{tabular}
\caption{Functors defined under assignments $\Omega$}
\label{table:OmegaFunctorsNEW}
\end{center}
\end{table}
\end{lemma}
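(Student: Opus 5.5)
Functoriality itself is inherited from the ordinary adjunction between spaces and frames. $\Omega$ sends a continuous map $f$ to the frame morphism $f\invim$, and $(g\circ f)\invim = f\invim\circ g\invim$, $\mathrm{id}\invim=\mathrm{id}$. Hence, once we know that objects land in the stated object class and morphisms land in the stated morphism class, identities and composition are preserved automatically. In each row of Table~\ref{table:OmegaFunctorsNEW} we therefore only check two things, the object condition and the morphism condition, reading each off Lemma~\ref{results:Omega}.

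The object conditions are as follows.
\begin{itemize}
\item For $\relsp$ and $\relspq$, Lemma~\ref{results:Omega}(1) gives that $\Omega X$ is a modal frame.
\item For $\relspl$ and $\relspql$, part (2) gives a lower modal frame.
\item For $\relspqu$, the target $\mfrmbs$ has all modal frames as objects, so part (1) suffices.
\item For $\relspqc$, part (3) gives a convex frame.
\item For $\eqspq$, an equivalence space is continuous, so parts (3) and (4) together give a convex equivalence frame.
\end{itemize}

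The morphism conditions are as follows.
\begin{itemize}
\item Every morphism in each source category is at least a continuous p-morphism, so part (6) makes $\Omega f$ a modal frame morphism. This settles the rows with target $\mfrm$.
\item For $\relspl$ and $\relspql$, the codomain $(Y,\Pcal,S)$ is l.s.c. and $f$ is a continuous relational morphism by (\ref{relmorph}), so part (7) gives diamond-strictness. This yields morphisms of $\lmfrmds$.
\item For $\relspqu$, $f$ has the q-morphism property (\ref{qmorph}) and the codomain is u.s.c., so part (8) gives box-strictness.
\item For $\relspqc$ and $\eqspq$, the codomain is both l.s.c. and u.s.c. and $f$ is a pq-morphism, so parts (7) and (8) together give strictness.
\end{itemize}

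No step is a real obstacle once Lemma~\ref{results:Omega} is granted. The only point needing care is confirming that each hypothesis of parts (7) and (8) concerns the codomain space rather than the domain. Since objects of each category share the relevant semicontinuity property, the codomain of any morphism has it. If one wished to verify part (8) directly, the key step would be the following. Given $x\in \algbox f\invim(U)$, u.s.c. of $S$ makes $\algboxclass U$ open. It then suffices to show $f(x)\in\algboxclass U$, and (\ref{qmorph}) yields this by contraposition: a successor of $f(x)$ outside $U$ would give an $R$-successor of $x$ outside $f\invim(U)$, which is impossible for $x$ in the open set $\algbox f\invim(U)$.
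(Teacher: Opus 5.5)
Your proposal is correct and is essentially the paper's argument: the paper gives no separate proof and treats the lemma as a row-by-row consequence of Lemma~\ref{results:Omega} (parts 1--4 for objects, parts 6--8 for morphisms), with functoriality inherited from $(g\circ f)\invim = f\invim\circ g\invim$. Your observation that the semicontinuity hypotheses in parts 7 and 8 concern the codomain is the one point needing care, and it is automatic because every object of the source category has the property; your direct check of part 8 via (\ref{qmorph}) is also sound.
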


Suppose that a set $A_0$ generates a modal Heyting algebra, in particular so that the system of inequalities (\ref{m1}), (\ref{m2}), (\ref{m3}), (\ref{m4}) are restated as a system of equalities. Suppose the existence of a continuous function 
$f: (X , \Ocal , R) \longrightarrow (Y, \Pcal , S)$ 
between relational spaces, and that there are valuation functions 
$v_\mathcal{I}: A_0 \longrightarrow \Ocal$ and 
$v_ \mathcal{J} : A_0 \longrightarrow \Pcal$. These can be extended homomorphically to interpretation functions 
$\mathcal{I}: A \longrightarrow \Ocal$ and 
$\mathcal{J} : A \longrightarrow \Pcal$, 
in particular using (\ref{intBoxInterp}) and (\ref{intDiaInterp}). 
Suppose also that 
$f \invim \circ v_\mathcal{J} = v_\mathcal{I}$. 
If $f$ is a continuous pq-morphism and an open map, then 
$f \invim \circ \mathcal{J} = \mathcal{I}$.
In modal logician's terms, an open and continuous pq-mporhism between models is a functional bisimulation: using the standard notation for logical satisfaction at a point  
\begin{equation*}
    x \vDash _{\mathcal{I}} \phi \Longleftrightarrow fx \vDash _\mathcal{J} \phi
\end{equation*}
for all $x \in X$ and $\phi$ in the modal language.



\subsection{Constructing Spaces}

Recall the construction of a topological space in the category $\topcat$ from a frame in the category $\frm$ of frames \cite{stonespaces}. For a frame $A$, the space constructed is $\pt{(A)}$. The points of the space can be presented as \emph{frame characters}, that is, as frame morphisms from $A$ to the two-element frame $\twobb$ with carrier set $\{0=\bot , 1 = \top \}$. Frame characters $p:A \longrightarrow \twobb$ are in one-one correspondence with completely prime filters of $A$, and we let 
\begin{equation*}
    \charop(p) = \{ a \in A \mid p(a)=1 \} .
\end{equation*}

We now show how to construct relational spaces from various kinds of modal frames. Different sets of points will be required in different versions of the constructions. 

\subsubsection{Pre-points}

Let $A$ be a modal frame. For any frame character $p:A \longrightarrow \twobb$, define 
\begin{align}
    \label{canonicalFilter}
    F_p & = \{ c \in A \mid p (\algbox c)=1\} \\ 
    \label{canonicalElement}
     a_p & = \bigvee \{ c \in A \mid p(\algdiamond c)=0 \} .
\end{align}

Lemma~\ref{result:FpFilter} is straightforward, by monotonicity of the box operator, and by axioms (\ref{m2}) for modal frames and (\ref{axiom:lscboxveemixed}) for convexity.

\begin{lemma}
\label{result:FpFilter}
If $A$ is a modal frame and $p$ is any frame charater, then the set $F_p$ is a filter. If $A$ is convex, $c \notin F_p$ and $p(\algdiamond b) = 0$, then $b \vee c \notin F_p$.
\end{lemma}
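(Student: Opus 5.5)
The proof is a direct verification from the definition of $F_p$, using that $p$ is a frame character, so it preserves finite meets and the order and takes values in $\twobb$.

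For the first claim I check the three filter conditions in turn.

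\emph{Non-empty (contains $\top$).} By axiom (\ref{m1}) we have $\top \leq \algbox \top$, so $\algbox\top = \top$. Hence $p(\algbox \top) = p(\top) = 1$ and $\top \in F_p$.

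\emph{Upward closed.} If $c \in F_p$ and $c \leq d$, then monotonicity of $\algbox$ gives $\algbox c \leq \algbox d$. Since $p$ is monotone, $1 = p(\algbox c) \leq p(\algbox d)$, so $d \in F_p$.

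\emph{Closed under binary meets.} If $c, d \in F_p$, then $p(\algbox c \wedge \algbox d) = p(\algbox c) \wedge p(\algbox d) = 1$. Axiom (\ref{m2}) gives $\algbox c \wedge \algbox d \leq \algbox(c\wedge d)$, so monotonicity of $p$ yields $p(\algbox(c \wedge d)) = 1$, that is, $c \wedge d \in F_p$.

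For the second claim, assume $A$ is convex, $c \notin F_p$ and $p(\algdiamond b)=0$. Applying axiom (\ref{axiom:lscboxveemixed}) with $a := c$ gives
\begin{equation*}
\algbox(c \vee b) \leq \algbox c \vee \algdiamond b .
\end{equation*}
Since $p$ preserves binary joins,
\begin{equation*}
p(\algbox(b \vee c)) \leq p(\algbox c) \vee p(\algdiamond b) = 0 \vee 0 = 0 .
\end{equation*}
Here $p(\algbox c) = 0$ because $c \notin F_p$ and $p$ takes values in $\twobb$. So $p(\algbox(b\vee c))=0$, and by commutativity of $\vee$ this means $b \vee c \notin F_p$.

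There is no substantive obstacle. The only point needing care is to instantiate (\ref{axiom:lscboxveemixed}) in the right order, with $c$ placed under the box and $b$ under the diamond. Note also that only (\ref{axiom:lscboxveemixed}) itself is used from convexity; the lower axiom (\ref{axiom:diamondpreservesbinaryjoin}) plays no role.
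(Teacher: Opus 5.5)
Your proof is correct and follows the route the paper indicates: monotonicity of $\algbox$ and axiom (\ref{m2}) give the filter property, and instantiating (\ref{axiom:lscboxveemixed}) with $c$ under the box and $b$ under the diamond gives the convex case. Your use of (\ref{m1}) to get $\top \in F_p$ supplies a detail that the paper's one-line justification leaves implicit.
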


For any subset $S$ of a partial order $A$, let 
$\downset S = \{ b \in A \mid \exists a \in S. \ b \leq a \}$ 
and 
$\upset S = \{ b \in A \mid \exists a \in S. \ a \leq b \}$.  Let 
$\downset a = \downset \{ a \}$ and 
$\upset a = \upset \{ a \}$ 
for any $a \in A$.

\begin{definition}
For a frame $A$, a frame character $p:A \longrightarrow \twobb$ and an element $a \in A$, and a filter $F\subseteq A$, we use the following \emph{pre-point conditions}:
\begin{align}
\label{condition:pairs}
p(\algdiamond a) &= 0 \\
\label{condition:triples-lsc}
\{ c \in A \mid p(\algdiamond c) = 0 \} & \subseteq \downset a  \\
\label{condition:triples}
F_p & \subseteq F \\
\label{condition:triples-usc}
F & \subseteq F_p .
\end{align}
For each source and target category, in each version of the construction of a relational space from a modal frame, we display the relevant form of \emph{pre-points} and the applicable pre-point conditions in Table~\ref{table:pre-points}.
\end{definition}
\begin{table}[ht]
    \centering
    \begin{tabular}{|l|l|l|l|l|}
        \hline
       \makecell[lt]{Source\\Category\\ of\\Modal\\Frames,\\$\Ccal$} 
       & 
       \makecell[lt]{Target\\Category\\of\\Relational\\Spaces,\\$\Dcal$}  
       & 
       \makecell[lt]{Pre-point\\form} 
       & 
       \makecell[lt]{Pre-point\\ conditions}  
       &
       \makecell[lt]{Modal\\point\\conditions}
       \\ 
       \hline
       $\mfrm$ & $\relsp$  & $(p,a)$ & (\ref{condition:pairs})
       &
       (\ref{condition:pointsDiamond})
       \\
        $\mfrm$ & $\relspl$  & $(p,a)$ & (\ref{condition:pairs}),(\ref{condition:triples-lsc})
        & 
        (\ref{condition:pointsDiamond})
        \\
       $\mfrm$ & $\relspq$ & $(p,a,F)$ & (\ref{condition:pairs}),(\ref{condition:triples})
       &
       (\ref{condition:pointsDiamond}),
       (\ref{condition:pointsBox})
       \\
       $\mfrmds$ & $\relspql$ & $(p,a,F)$ & (\ref{condition:pairs}),(\ref{condition:triples-lsc}),(\ref{condition:triples})
       &
       (\ref{condition:pointsDiamond}),
       (\ref{condition:pointsBox})
       \\
       $\mfrmbs$ & $\relspqu$ & $(p,a,F)$ & 
       (\ref{condition:pairs}),(\ref{condition:triples}),(\ref{condition:triples-usc})
       &
       (\ref{condition:pointsDiamond}),
       (\ref{condition:pointsBox})
       \\
       $\mfrms$ & $\relspqc$ & $(p,a,F)$ & 
        (\ref{condition:pairs}),(\ref{condition:triples-lsc}),(\ref{condition:triples}),(\ref{condition:triples-usc})
        &
       (\ref{condition:pointsDiamond}),
       (\ref{condition:pointsBox})
        \\
       $\eqfrm$ & $\eqspq$ & $(p,a,F)$ 
       & 
       (\ref{condition:pairs}),(\ref{condition:triples-lsc}),(\ref{condition:triples}),(\ref{condition:triples-usc})
       &
       (\ref{condition:pointsDiamond}),
       (\ref{condition:pointsBox})
       \\ \hline
    \end{tabular}
    \caption{Pre-points}
    \label{table:pre-points}
\end{table}

The components of pre-points are, in some cases, fully-determined by the frame character component. In some cases, pre-points (and subsequent elements of the construction) could be done without mentioning the dependent components. We instead present pre-points as uniformly as possible to show exactly how the constructions in different cases relate to each other and exactly how the simplifications work. When the target category has only continuous p-morphisms, rather than pq-morphisms, the pre-points are not rather presented as triples satisfying both (\ref{condition:triples}) and (\ref{condition:triples-usc}), in particular fixing $F=F_p$: although it makes no difference at this stage, in the uniform development below it would turn out to require box-strictness of modal frame morphisms.
In order to allow us to write propositions regarding pre-points that are either triples or pairs, we use notation $(p,a,[F])$ to indicate that the third component should be ignored, where appropriate.

Conditions~(\ref{condition:pairs}) and (\ref{condition:triples-lsc}) are more obviously dual to each other if we state them, respectively, in the forms 
\begin{align*}
    \forall c \in A . \ & c \leq a \Longrightarrow p(\algdiamond c) = 0\\
    \forall c \in A . \ & p(\algdiamond c) = 0 \Longrightarrow c \leq a . \\
\end{align*}
Define a frame character $p$ of a modal frame to be \emph{replete} if 
\begin{equation*}
    p( \algdiamond a_p) = 0 . 
\end{equation*}
holds, and define a modal frame to be replete if every frame character is replete.
Conditions~(\ref{condition:pairs}) and (\ref{condition:triples-lsc}) do not, without further assumptions, guarantee that any  $p$ is replete.

Condition~(\ref{condition:pairs}) was used by Hilken \cite{hil00} alongside continuous p-morphisms. A related restriction is placed on theories constructed for the completeness results of Wijesekera \cite{wijesekera1990constructive} 
and in later developments \cite{AMPR01,FM97PLL,MendlerPaiva2005} using similar ideas. Condition~(\ref{condition:triples}) was introduced to allow for the use of continuous pq-morphisms. Condition~(\ref{condition:triples-lsc}) will be used to engineer lower-semicontinuity, while condition (\ref{condition:triples-usc}) will be used for upper-semicontinuity. 
Lemma~\ref{result:prepointProperties}  collects together simple consequences of these definitions for later use. 

\begin{lemma}
\label{result:prepointProperties}
Let $A$ be a modal frame. 
\begin{enumerate}
\item For each frame character $p$, there is always $a = \bot$ satisfying (\ref{condition:pairs}). If pre-points are not required to satisfy (\ref{condition:triples-lsc}) then $(p, \bot, [F_p])$ is a pre-point.

\item If pre-points are not required to satisfy (\ref{condition:triples-lsc}), and if $(p,a,[F])$ is a pre-point and $b \leq a$, then $(p,b,[F])$ is a pre-point.

\item   
If  $(p,a,[F])$ is a pre-point, then $a \leq a_p$. If $(p,a,F)$ satisfies (\ref{condition:triples-lsc}) and $p(\algdiamond a_p) = 0$, then $a_p \leq a$. 
If pre-points are required to satisfy both (\ref{condition:pairs}) and (\ref{condition:triples-lsc}), then all pre-points, if they exist, have the form $(p,a_p,[F])$  for some frame character $p$ [and filter $F$]. 


\item If $p$ is a frame character, then 
$F_p$
is a filter satisfying both (\ref{condition:triples}) and (\ref{condition:triples-usc}). If pre-points $(p,a,F)$ are required to satisfy both (\ref{condition:triples}) and (\ref{condition:triples-usc}) then $F=F_p$.

\item If pre-points are not required to satisfy  (\ref{condition:triples-usc}), and if $(p,a,F)$ is a pre-point and $F\subseteq G$ for a filter $G$, then $(p,a,G)$ is a pre-point.

\item If pre-points are not required to satisfy (\ref{condition:triples-usc}), and if $(p,a,F)$ and $(p,a,G)$ are pre-points, then so is $(p,a,F\cap G)$.
\end{enumerate}
\end{lemma}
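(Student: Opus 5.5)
The proof goes item by item, each by unfolding the pre-point conditions (\ref{condition:pairs})--(\ref{condition:triples-usc}) and the definitions (\ref{canonicalFilter}), (\ref{canonicalElement}), using only monotonicity of $\algdiamond$, axiom (\ref{m4}) and Lemma~\ref{result:FpFilter}.

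For item 1, note that $p(\algdiamond\bot)\le p(\bot)=0$ by (\ref{m4}), since $p$ is monotone and preserves $\bot$. So $a=\bot$ satisfies (\ref{condition:pairs}). By Lemma~\ref{result:FpFilter}, $F_p$ is a filter, and trivially $F_p\subseteq F_p$ and $F_p\supseteq F_p$. Hence $(p,\bot,[F_p])$ meets every condition in Table~\ref{table:pre-points} other than (\ref{condition:triples-lsc}).

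For item 2, $b\le a$ gives $\algdiamond b\le\algdiamond a$, so $p(\algdiamond b)\le p(\algdiamond a)=0$. The filter component and conditions (\ref{condition:triples}), (\ref{condition:triples-usc}) do not involve $a$, so they are unchanged.

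For item 3, condition (\ref{condition:pairs}) says that $a$ belongs to the set whose join is $a_p$, so $a\le a_p$. If (\ref{condition:triples-lsc}) holds and $p(\algdiamond a_p)=0$, then $a_p$ lies in $\{c\mid p(\algdiamond c)=0\}\subseteq\downset a$, so $a_p\le a$.

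The last claim of item 3 is the only step needing care. Here I argue as follows. Suppose a pre-point satisfies both (\ref{condition:pairs}) and (\ref{condition:triples-lsc}). Every $c$ with $p(\algdiamond c)=0$ is then below $a$ by (\ref{condition:triples-lsc}), so $a_p\le a$ as a join of elements below $a$. Combined with $a\le a_p$ this gives $a=a_p$. This route avoids assuming repleteness: in fact repleteness follows, since $p(\algdiamond a_p)=p(\algdiamond a)=0$.

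For item 4, $F_p$ is a filter by Lemma~\ref{result:FpFilter}, and both inclusions with itself are trivial. If both (\ref{condition:triples}) and (\ref{condition:triples-usc}) are imposed, antisymmetry of inclusion forces $F=F_p$.

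For item 5, $F_p\subseteq F\subseteq G$ gives (\ref{condition:triples}) for $G$. The other conditions do not mention the filter, and (\ref{condition:triples-usc}) is not required.

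For item 6, $F\cap G$ is a filter, being an intersection of filters. Also $F_p\subseteq F\cap G$ because $F_p$ is contained in each. The remaining conditions are again independent of the filter.

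Nothing here is a real obstacle. The only points needing attention are recording which conditions each item assumes are in force, and, in item 3, deriving $a_p\le a$ directly from the join definition rather than from the repleteness hypothesis.
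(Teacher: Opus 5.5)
Your proposal is correct and takes the route the paper intends: the paper gives no separate proof, calling these ``simple consequences of these definitions'', and your item-by-item unfolding using monotonicity, axiom (\ref{m4}) and Lemma~\ref{result:FpFilter} is exactly that. You derive $a_p \leq a$ directly from (\ref{condition:triples-lsc}), since $a_p$ is a join of elements below $a$. This is the same step the paper uses in the proof of Lemma~\ref{result:relsplPoints}, and you are right that it makes the hypothesis $p(\algdiamond a_p)=0$ in the second sentence of item 3 redundant.
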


We call pre-points of the form $(p,a_p,[F_p])$ \emph{canonical}, where they exist, that is, when $p$ is replete.

\subsubsection{Relating Pre-points}

 A binary relation $R_A$ is defined for each notion of pre-point under consideration. For frame characters $p$ and $q$, we consider the following \emph{relation conditions}
    \begin{align}
    \label{condition:relationTriplesBox}
        F
        & \subseteq \charop (q)
       \\ \label{condition:relationPairsDiamond}
        q(a) &= 0 
    \\
    \label{condition:relationPairsBox}
        p \circ \algbox &\leq q \\
\label{condition:relationPairsLSC}
        q &\leq p \circ \algdiamond .
    \end{align}
 
\begin{definition}
    For pre-points $(p,a)$ and $(q,b)$, define $(p,a) R_A (q,b)$ if and only if (\ref{condition:relationPairsDiamond}) and (\ref{condition:relationPairsBox})  both hold.
    
    For pre-points $(p,a,F)$ and $(q,b,G)$, define  $(p,a,F) R_A (q,b,G)$ if and only if (\ref{condition:relationPairsDiamond}) and (\ref{condition:relationTriplesBox}) both hold.
\end{definition}

In the cases where pre-points are  pairs, the definition is that used by Hilken \cite{hil00} and is, essentially, suggested by the definition given by Wijesekera \cite{wijesekera1990constructive} and later used by others \cite{AMPR01,MendlerPaiva2005}. 

The relation conditions (\ref{condition:relationPairsBox}) and (\ref{condition:relationPairsLSC}) express properties that hold of modal propositions in the usual relational Kripke semantics of classical modal logics. 
Conditions (\ref{condition:relationPairsBox}) and (\ref{condition:relationPairsLSC}) are, respectively, equivalent to 
\begin{align*}
    F_p &\subseteq \charop (q) \\
    q(a_p) &= 0 .
\end{align*}
    
Part~\ref{result:tripleRelationReduced} of Lemma~\ref{result:relationRelationships} shows that representation of pre-points as frame characters in the case where the target category for the construction is $\relspqc$ allows us to also to reconstruct the relation in the way familiar from the treatment from classical modal logic, except that in our case the two conditions (\ref{condition:relationPairsBox}) and (\ref{condition:relationPairsLSC}) are not equivalent to each other. 

\begin{lemma}
\label{result:relationRelationships}
\begin{enumerate}
    \item For a pre-point $(p,a,F)$ and any other frame character $q$, satisfaction of condition (\ref{condition:relationTriplesBox}) implies satisfaction of condition (\ref{condition:relationPairsBox}).
    
    \item For pre-points of the form $(p, a_p,[F])$, condition (\ref{condition:relationPairsDiamond}) is satisfied if and only if condition (\ref{condition:relationPairsLSC}) is satisfied.

    \item 
    \label{result:tripleRelationReduced}
    For canonical pre-points $(p,a_p,F_p)$ and any frame character $q$, 
    condition (\ref{condition:relationPairsDiamond}) is equivalent to condition  (\ref{condition:relationPairsLSC}), and 
    condition (\ref{condition:relationPairsBox}) is equivalent to condition (\ref{condition:relationTriplesBox}). 
    
    \end{enumerate}
\end{lemma}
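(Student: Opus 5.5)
The plan is to unfold every condition into a statement about the values of the frame characters $p$ and $q$ on elements of $A$. Each part then reduces to a one-line argument, using only the definitions (\ref{canonicalFilter}) and (\ref{canonicalElement}) and the fact that frame characters preserve arbitrary joins.

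For the first part, note that in every row of Table~\ref{table:pre-points} where pre-points are triples $(p,a,F)$, condition (\ref{condition:triples}) is imposed, so $F_p \subseteq F$. If $F \subseteq \charop(q)$ then $F_p \subseteq \charop(q)$. By the definition of $F_p$, this says that $p(\algbox c) = 1$ implies $q(c) = 1$ for all $c \in A$. Since $\twobb$ has only the values $0 \le 1$, this is exactly the pointwise inequality $p \circ \algbox \leq q$, i.e.\ condition (\ref{condition:relationPairsBox}).

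For the second part, I will first rewrite (\ref{condition:relationPairsLSC}) pointwise: $q \le p\circ\algdiamond$ holds iff for all $c$, $p(\algdiamond c) = 0$ implies $q(c)=0$.
\begin{itemize}
\item Suppose $q(a_p) = 0$. Every $c$ with $p(\algdiamond c)=0$ satisfies $c \le a_p$ by (\ref{canonicalElement}). Monotonicity of $q$ then gives $q(c)=0$.
\item Conversely, suppose $q(c) = 0$ whenever $p(\algdiamond c) = 0$. Since $q$ is a frame morphism it preserves arbitrary joins, so $q(a_p) = \bigvee \{ q(c) \mid p(\algdiamond c) = 0 \} = 0$.
\end{itemize}
This is the only step needing any care. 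The point is that repleteness of $p$ is not used; only complete join-preservation of $q$ is. The third component $[F]$ plays no role.

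For the third part, the first equivalence is the special case of the second part with $a = a_p$. For the second equivalence, $F = F_p$ for canonical pre-points, so (\ref{condition:relationTriplesBox}) reads $F_p \subseteq \charop(q)$. The computation in the first part shows this is literally equivalent, not merely implying, to $p\circ\algbox \le q$. No step is expected to be a real obstacle; the main thing is to keep track of which pre-point conditions are available in each row of the table.
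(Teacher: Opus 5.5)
Your proof is correct and takes essentially the paper's route: the paper gives no separate proof, relying instead on the remark just before the lemma that (\ref{condition:relationPairsBox}) is equivalent to $F_p \subseteq \charop(q)$ and (\ref{condition:relationPairsLSC}) to $q(a_p)=0$. Combined with the pre-point condition (\ref{condition:triples}) for part 1, those two equivalences are exactly what you establish, including the use of complete join-preservation of $q$ for the converse direction in part 2.
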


%

We avoid giving different names to the different relations $R_A$ in each case of the construction. The obvious projections work: if $(p,a,F) R_A (q,b,G)$ in the case where the target is a relational space in $\relspq$, then $(p,a) R_A (q,b)$ for $\relsp$.

\subsubsection{Modal Frame Points and Topology}

There are usually too many pre-points,  if the intended destination is production of an adjoint to the functor $\Omega$. The set of pre-points is therefore pruned down to a set of points that are better behaved.

\begin{definition}
In each case of the construction, given the modal frame $A = (A , \algbox , \algdiamond )$, we define a set of \emph{(modal frame) points}, $
\Pbb _A$
This involves the application of the following \emph{modal point conditions} to all $(p,a,[F]) \in \Pbb _A$: 
\begin{align}
\label{condition:pointsDiamond}
c \nleq a 
& \Longrightarrow 
\exists (q,b,[G]) \in \Pbb_A \mbox{ with }
(p,a,[F]) R_A (q,b,[G]) 
\mbox{ and } q(c)=1\\ 
\label{condition:pointsBox}
c \notin F
& \Longrightarrow 
\exists (q,b,G) \in \Pbb_A 
\mbox{ with }
(p,A,F) R_A (q,b,G) 
\mbox{ and 
}
q(c)=0.
\end{align}
When the target category of relational spaces has continuous pq-morphisms as arrows, both (\ref{condition:pointsDiamond}) and (\ref{condition:pointsBox}) are required. When the target category has morphisms all the continuous p-morphisms between objects, only (\ref{condition:pointsDiamond}) is required.
This is summarized in Table~\ref{table:pre-points}. The set $\Pbb _A$ is the largest set of pre-points satisfying the applicable modal point conditions.
\end{definition}

Without further assumptions on $A$,  condition (\ref{condition:pointsBox}) is not automatically satisfied in the case $F=F_p$: when the target caregory has all continuous p-morphisms between the objects so that pre-points $(p,a)$ can be identified with pre-points $(p,a,F_p)$, the set of modal frame points would have been smaller if we were also to require (\ref{condition:pointsBox}). 

The following result shows how the modal point conditions simplify in the cases of a target category $\relspl$, $\relspqc$, or $\eqfrm$ where all pre-points can be assumed to be canonical, that is, of the form $(p,a_p,[F_p])$ for some frame character $p$. In these cases, every point simply consists of a replete frame character.

\begin{lemma}
    \begin{enumerate}
    \item For target categories $\relspl$, $\relspqc$ and $\eqfrm$, condition (\ref{condition:pointsDiamond}) is equivalent to: if 
    replete $p \in \Pbb_A$  and $p(\algdiamond c) = 1$, then there is replete $q \in \Pbb_A$ with $p R_A q$ and $q(c)=1$
    \item For target categories $\relspqc$ and $\eqfrm$, condition (\ref{condition:pointsBox}) is equivalent to: if replete $p \in \Pbb _A$ and $p(\algbox c) = 0$, then there is replete $q \in \Pbb_A$ with $p R_A q$ and $q(c)=0$.
\end{enumerate}
\end{lemma}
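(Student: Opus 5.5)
The plan is to first record that in these target categories every pre-point is canonical. Both (\ref{condition:pairs}) and (\ref{condition:triples-lsc}) are required, so by Lemma~\ref{result:prepointProperties}(3) every pre-point has the form $(p,a_p,[F])$. Condition (\ref{condition:pairs}) at $a=a_p$ is exactly repleteness of $p$. For $\relspqc$ and $\eqfrm$ both (\ref{condition:triples}) and (\ref{condition:triples-usc}) are imposed, so Lemma~\ref{result:prepointProperties}(4) gives $F=F_p$. Hence points are determined by replete characters, and we may write $p\in\Pbb_A$ and $pR_Aq$. By Lemma~\ref{result:relationRelationships}(2),(3), the relation $pR_Aq$ amounts to (\ref{condition:relationPairsLSC}) together with (\ref{condition:relationPairsBox}), i.e.\ $F_p\subseteq\charop(q)$ and $q(a_p)=0$. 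For $\relspl$ it is (\ref{condition:relationPairsDiamond}) with (\ref{condition:relationPairsBox}) directly.

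For part 1, the key step is the equivalence, for replete $p$, of $c\nleq a_p$ with $p(\algdiamond c)=1$.

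If $p(\algdiamond c)=0$, then $c$ is one of the joinands defining $a_p$, so $c\leq a_p$.

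Conversely, suppose $c\leq a_p$. Monotonicity of $\algdiamond$ gives $p(\algdiamond c)\leq p(\algdiamond a_p)=0$ by repleteness.

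Substituting this equivalence into the hypothesis of (\ref{condition:pointsDiamond}) yields the stated form, since the conclusions coincide after identifying points with replete characters. Both directions of the equivalence of conditions then follow immediately.

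For part 2, the analogous step is that $c\notin F_p$ iff $p(\algbox c)=0$, which is just the definition (\ref{canonicalFilter}) of $F_p$. Substituting $F=F_p$ into (\ref{condition:pointsBox}) and identifying points with replete characters gives the claim.

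The main obstacle is only bookkeeping: checking that the identification of points with replete characters is legitimate in each of the three cases. In particular, for $\relspl$ there is no third component, and for $\eqfrm$ the pre-point conditions are listed as identical to $\relspqc$. One should also note that the relation $R_A$ transported along this identification is the one given in Lemma~\ref{result:relationRelationships}. Nothing beyond monotonicity of $\algdiamond$ and the definitions is needed.
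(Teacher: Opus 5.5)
Your proposal is correct and follows the same route the paper indicates in the sentence preceding the lemma, which gives no further proof: in these cases every pre-point is canonical, $(p,a_p,[F_p])$ with $p$ replete, so points can be identified with replete characters. You supply the details the paper leaves implicit, namely that for replete $p$ one has $c\nleq a_p$ iff $p(\algdiamond c)=1$ (by monotonicity of $\algdiamond$), and that $c\notin F_p$ iff $p(\algbox c)=0$ by definition of $F_p$.
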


\begin{definition}
For each element $c$ in $A$, define the set 
\begin{equation*}
    \phi_A (c) = \{ ((p,a,F) \in \Pbb_A \mid p(c)=1 \} .
\end{equation*}

Define the topology $\Ofrak _A$ on $\Pbb _A$ to be given by the family of sets in the  image of $\phi_A: c \mapsto \phi_A (c)$. 
\end{definition}

The topology on $\Pbb_A$ is the weak/initial topology making the projection map $\pi: (p,a,[F]) \mapsto p \in \pt(A)$ continuous, and $\phi_A = \pi\invim \circ \eta_A$ where $\eta_A$ is the usual frame map from $A$ to the frame of opens on $\pt{(A)}$.

We now have enough data to present the object parts of the functors $\Fcal$ from categories of modal frames to categories of relational spaces.

\begin{definition}
For a modal frame $A$, let 
$\Fcal(A) = (\Pbb _A , \Ofrak_A , R_A)$ according to the case of the source category of modal frames and the target category of relational spaces (Table~\ref{table:pre-points}). 
\end{definition}

For a given frame character $p$, the component $a$ of a point $(p,a,[F])$ determines the interior of 
its set of $R_A$-unrelated points. The component $F$ determines the 
neighbourhood filter of the set of points $R_A$-related to $(p,a,F)$. 
This is the content of Lemma~\ref{tripmot}, which follows directly from the definition of pre-point, relation $R_A$, and the point conditions (\ref{condition:pointsDiamond}) and (\ref{condition:pointsBox}). 
\begin{lemma}
\label{tripmot}
Let $(p,a,[F]) \in \Pbb _A$. Then for all $c \in A$ 
\begin{enumerate}
\item $c \leq a$ iff $R_A \dirim (p,a,[F])\cap\phi _A (c) 
= \emptyset $ 
\item $c \in  F$ iff 
$R_A \dirim (p,a,F)
\subseteq \phi _A (c)$.
\end{enumerate}
\end{lemma}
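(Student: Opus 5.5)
Both parts are proved by unfolding the definitions of $\phi_A$, of $R_A$ and of pre-points, and then invoking the modal point conditions (\ref{condition:pointsDiamond}) and (\ref{condition:pointsBox}). In each part one direction comes from the pre-point and relation conditions, and the other from the relevant modal point condition applied contrapositively.

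For part 1, assume $c \leq a$. Take any $(q,b,[G]) \in R_A\dirim(p,a,[F])$. The relation condition (\ref{condition:relationPairsDiamond}) gives $q(a)=0$. Since $q$ is a frame character, it is monotone, so $q(c) \leq q(a) = 0$. Hence $(q,b,[G]) \notin \phi_A(c)$, and the intersection is empty.

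Conversely, assume $c \nleq a$. Because $(p,a,[F]) \in \Pbb_A$, condition (\ref{condition:pointsDiamond}) supplies a point $(q,b,[G]) \in \Pbb_A$ with $(p,a,[F]) R_A (q,b,[G])$ and $q(c)=1$. That point lies in $R_A\dirim(p,a,[F]) \cap \phi_A(c)$, so the intersection is nonempty. This uses only the definition of $R_A$, so it is uniform over the pairs and triples cases. Condition (\ref{condition:pairs}) is not actually needed here.

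For part 2, which concerns triples (the pq-morphism targets, where (\ref{condition:pointsBox}) is in force), assume $c \in F$. Any $R_A$-successor $(q,b,G)$ satisfies (\ref{condition:relationTriplesBox}), that is $F \subseteq \charop(q)$. So $q(c)=1$, and the successor lies in $\phi_A(c)$; thus $R_A\dirim(p,a,F) \subseteq \phi_A(c)$.

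Conversely, assume $c \notin F$. Condition (\ref{condition:pointsBox}) yields a point $(q,b,G) \in \Pbb_A$ related to $(p,a,F)$ with $q(c)=0$. This witnesses $R_A\dirim(p,a,F) \not\subseteq \phi_A(c)$.

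There is no real obstacle in either part; the only care needed is bookkeeping. First, the witnesses produced by the point conditions must lie in $\Pbb_A$ itself and not merely among the pre-points; this holds by the way the conditions are stated. Second, (\ref{condition:pointsBox}) contains a typo, $(p,A,F)$, which should be read as $(p,a,F)$. Third, part 2 applies only in the cases where points are triples and (\ref{condition:pointsBox}) is required.
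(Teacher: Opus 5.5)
Your proof is correct and is the same argument the paper intends: the paper only says the lemma follows directly from the definitions of pre-point and $R_A$ together with the point conditions (\ref{condition:pointsDiamond}) and (\ref{condition:pointsBox}), and your proof unfolds exactly that. Restricting part 2 to the triples cases, where (\ref{condition:pointsBox}) is imposed, and reading $(p,A,F)$ as $(p,a,F)$ are both right.
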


The topology $\Ofrak _A$ can be equipped with modal operators as in Definition~\ref{def:OmegaAssignments}, giving a modal frame $\Omega (\Fcal (A))$. The point condition (\ref{condition:pointsDiamond}) is used to ensure that $\phi_A$ is a morphism of modal frames.

\begin{lemma}
Let $A$ be a modal frame. The function $\phi _A : A \longrightarrow \Omega (\Fcal (A))$ is a modal frame morphism. 
\end{lemma}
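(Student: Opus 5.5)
We must show three things: that $\phi_A$ is a frame morphism, and that it satisfies (\ref{boxmorphcond}) and (\ref{dimorphcond}), i.e.\ $\phi_A(\algbox c) \subseteq \algbox \phi_A(c)$ and $\phi_A(\algdiamond c) \subseteq \algdiamond \phi_A(c)$, where the right-hand operators are those of Definition~\ref{def:OmegaAssignments} on $\Ofrak_A$.

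The frame-morphism part is immediate from the remark that $\phi_A = \pi\invim \circ \eta_A$. Here $\eta_A : A \to \Omega(\pt(A))$ is the standard frame morphism, and $\pi\invim$ preserves arbitrary unions and finite intersections. So $\phi_A$ preserves finite meets and arbitrary joins, and its image $\Ofrak_A$ is closed under these operations, so it is a topology.

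For the modal inequalities, note that $\phi_A(\algbox c)$ and $\phi_A(\algdiamond c)$ are themselves open in $\Ofrak_A$. It therefore suffices to prove the inclusions into the classical operators $\algboxclass \phi_A(c)$ and $\algdiamondclass \phi_A(c)$, and then take interiors.

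For the box inclusion, let $(p,a,[F]) \in \Pbb_A$ with $p(\algbox c) = 1$, and let $(p,a,[F]) R_A (q,b,[G])$. In the pair case, (\ref{condition:relationPairsBox}) gives $1 = p(\algbox c) \leq q(c)$. In the triple case, $c \in F_p \subseteq F$ by (\ref{condition:triples}), and (\ref{condition:relationTriplesBox}) then gives $q(c) = 1$. The first part of Lemma~\ref{result:relationRelationships} packages the same fact. Either way $R_A\dirim(p,a,[F]) \subseteq \phi_A(c)$, as required.

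For the diamond inclusion, let $p(\algdiamond c) = 1$. By the pre-point condition (\ref{condition:pairs}), $p(\algdiamond a) = 0$, so monotonicity of $\algdiamond$ forces $c \nleq a$. Now the modal point condition (\ref{condition:pointsDiamond}), which is imposed in every case of Table~\ref{table:pre-points}, supplies $(q,b,[G]) \in \Pbb_A$ with $(p,a,[F]) R_A (q,b,[G])$ and $q(c) = 1$. Hence $R_A\dirim(p,a,[F]) \cap \phi_A(c) \neq \emptyset$, as required. Equivalently, one can cite the first part of Lemma~\ref{tripmot}.

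The only step with real content is the diamond case. It needs both the pre-point condition (\ref{condition:pairs}) and the closure property (\ref{condition:pointsDiamond}) of $\Pbb_A$, which is exactly why the points had to be pruned from the pre-points. The other steps are routine bookkeeping across the cases of the table. I expect the one point to check carefully is that in every case the relation $R_A$ entails $p\circ\algbox \leq q$.
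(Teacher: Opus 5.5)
Your proposal is correct and follows essentially the same route as the paper. Both reduce to the classical operators using openness of $\phi_A(\algbox c)$ and $\phi_A(\algdiamond c)$. Both handle the box case via (\ref{condition:triples}) for triples or (\ref{condition:relationPairsBox}) for pairs, and the diamond case via (\ref{condition:pairs}) together with the point condition (\ref{condition:pointsDiamond}). You are slightly more explicit than the paper in two places: you justify the frame-morphism part through $\phi_A = \pi\invim\circ\eta_A$, and you derive $c \nleq a$ from (\ref{condition:pairs}) by monotonicity of $\algdiamond$, both of which the paper leaves implicit.
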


\begin{proof}
    That $\phi_A$ is a frame morphism is easily seen. 
    
    For lax preservation of the box operator, we require
    \begin{equation}
    \label{equation:phiBoxLaxPres}
        (\phi_A \circ \algbox )(c)\subseteq (\algbox \circ \phi _A)(c) 
    \end{equation}
    for all $c \in A$. 
    Suppse $(p,a,[F]) \in \phi_A ( \algbox c)$. Then $p(\algbox c)=1$ and $c \in F_p \subseteq F$. 
    If points are triples, then $F_p \subseteq F$ and the condition (\ref{condition:triples}) shows that $(p,a,F) \in \algboxclass (\phi_A(c))$. 
    If points are pairs, then condition (\ref{condition:relationPairsBox}) guarantees that $R_A \dirim (p,a) \subseteq \phi_A (c)$, and so  $(p,a) \in \algboxclass (\phi_A(c))$.
    Therefore $\phi_A (\algbox (c))\subseteq \algboxclass (\phi _A (c))$. As $\phi_A (\algbox (c))$ is open, (\ref{equation:phiBoxLaxPres}) holds.

    For lax preservation of the diamond operator, 
    we require
    \begin{equation}
    \label{equation:phiDiamondLaxPres}
        (\phi_A \circ \algdiamond)(c) \subseteq (\algdiamond \circ \phi _A )(c)
    \end{equation}
    for all $c \in A$. 
    Suppose $(p,a,[F]) \in \phi_A (\algdiamond c)$ for some $c \in A$. So $p(\algdiamond c)=1$ and $c\nleq a$. By the point condition (\ref{condition:pointsDiamond}), $(p,a,[F]) \in \algdiamondclass (\phi_A (c))$. Therefore $\phi_A (\algdiamond c) \subseteq \algdiamondclass (\phi_A (c))$. As 
    $\phi_A (\algdiamond c)$ is open, 
    $\phi_A ( \algdiamond c) \subseteq \algdiamond (\phi _A (c))$.
\end{proof}

These results can be strengthened if we take points to be triples and use stronger restrictions on pre-points. 

\begin{lemma}
\label{result:uscTripleImplyBoxStrict}
If all the modal frame points of $A$ satisfy conditions (\ref{condition:triples-usc}) and (\ref{condition:pointsBox}), then the morphism $\phi_A$ is box-strict and the space $\Fcal (A)$ is upper-semicontinuous. 
\end{lemma}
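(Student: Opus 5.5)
The plan is to prove the reverse inclusion $\algboxclass(\phi_A(c)) \subseteq \phi_A(\algbox c)$ for every $c \in A$. This one inclusion gives both conclusions at once. Since $\phi_A(\algbox c)$ is open and the lax inclusion $\phi_A(\algbox c)\subseteq \algboxclass(\phi_A(c))$ is already known from the preceding lemma, we would get
\[
\phi_A(\algbox c) = \algboxclass(\phi_A(c)) .
\]
\emph{Upper-semicontinuity.} Every open set of $\Ofrak_A$ has the form $\phi_A(c)$, so $\algboxclass$ of every open set would itself be open.

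\emph{Box-strictness.} Because $\algboxclass(\phi_A(c))$ is open, its interior is itself. Hence $\algbox(\phi_A(c)) = \phi_A(\algbox c)$, which is box-strictness of $\phi_A$.

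For the inclusion, fix a point $(p,a,F) \in \Pbb_A$ with $R_A\dirim(p,a,F) \subseteq \phi_A(c)$. I will show $p(\algbox c) = 1$ by contraposition, assuming $p(\algbox c)=0$.

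\begin{enumerate}
\item From $p(\algbox c)=0$ we get $c \notin F_p$.
\item Condition (\ref{condition:triples-usc}) gives $F \subseteq F_p$, so $c \notin F$.
\item The modal point condition (\ref{condition:pointsBox}) now supplies $(q,b,G) \in \Pbb_A$ with $(p,a,F) R_A (q,b,G)$ and $q(c) = 0$.
\item Then $(q,b,G) \in R_A\dirim(p,a,F)$ but $(q,b,G) \notin \phi_A(c)$, contradicting the assumption.
\end{enumerate}

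So $p(\algbox c)=1$, that is, $(p,a,F)\in\phi_A(\algbox c)$. Equivalently, this step is part 2 of Lemma~\ref{tripmot} together with the inclusion $F\subseteq F_p$: there $c \in F$ holds iff $R_A\dirim(p,a,F)\subseteq\phi_A(c)$, and $F \subseteq F_p$ turns $c\in F$ into $p(\algbox c)=1$.

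No step looks like a real obstacle. The only care needed is bookkeeping. First, points here must be triples, since (\ref{condition:pointsBox}) refers to $F$. Second, condition (\ref{condition:triples-usc}) is exactly what converts $c\notin F_p$ into $c \notin F$, so that (\ref{condition:pointsBox}) can be applied. Third, the identification of the opens of $\Ofrak_A$ with the image of $\phi_A$ is needed for the semicontinuity claim.
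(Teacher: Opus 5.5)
Your proof is correct and follows the paper's argument. The paper also proves the reverse inclusion $\algboxclass(\phi_A(c)) \subseteq \phi_A(\algbox c)$: it uses part 2 of Lemma~\ref{tripmot} (which rests on (\ref{condition:pointsBox})) to get $c \in F$, then (\ref{condition:triples-usc}) to get $p(\algbox c)=1$, and your contrapositive simply unfolds that lemma inline before reading off upper-semicontinuity and box-strictness from the resulting equality with an open set, as the paper does.
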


For every open set $\phi_A (c) \in \Ofrak_A$
\begin{equation*}
    \algbox \phi_A (c) = \algboxclass \phi_A (c) = \phi_A (\algbox c)
\end{equation*}

\begin{proof} 
We show 
$\algboxclass \circ \phi_A (c) \subseteq \phi_A \circ \algbox (c)$, having already established (\ref{equation:phiBoxLaxPres}).
Take $(p,a,F) \in \algboxclass (\phi_A (c))$ for some $c \in A$. Therefore $R_A \dirim (p,a,F) \subseteq \phi_A(c)$.  By Lemma~\ref{tripmot} we have that $c \in F$ since (\ref{condition:pointsBox}) is assumed. Then $p ( \algbox c) = 1$ by condition (\ref{condition:triples-usc}), and therefore $(p,a,F) \in \phi_A( \algbox c)$. 
\end{proof}

\begin{lemma}
\label{result:lscImpliesDiamoindStrict}
If the modal frame points of $A$ satisfy (\ref{condition:triples-lsc}), then the morphism $\phi_A$ is diamond-strict and the space $\Fcal(A)$ is lower semicontinuous.

For every open set $\phi_A (c) \in \Ofrak_A$:
\begin{equation*}
     \algdiamond \phi_A (c) =
     \algdiamondclass \phi_A (c) = \phi_A (\algdiamond c)
\end{equation*} 
\end{lemma}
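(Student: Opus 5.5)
The plan mirrors the proof of Lemma~\ref{result:uscTripleImplyBoxStrict}, with diamonds in place of boxes. The earlier lemma showing that $\phi_A$ is a modal frame morphism already gives the lax inclusion (\ref{equation:phiDiamondLaxPres}), so it remains to prove the reverse inclusion
\begin{equation*}
  \algdiamondclass (\phi_A (c)) \subseteq \phi_A (\algdiamond c)
\end{equation*}
for every $c \in A$. This inclusion gives $\algdiamondclass \phi_A(c) \subseteq \phi_A(\algdiamond c) \subseteq \algdiamond \phi_A(c) \subseteq \algdiamondclass \phi_A(c)$. So all three sets coincide, which is exactly the displayed chain of equalities.

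To prove the inclusion, take $(p,a,[F]) \in \algdiamondclass(\phi_A(c))$, so that $R_A\dirim(p,a,[F]) \cap \phi_A(c) \neq \emptyset$. By part~1 of Lemma~\ref{tripmot}, which uses the point condition (\ref{condition:pointsDiamond}) together with (\ref{condition:relationPairsDiamond}), this gives $c \nleq a$. Now suppose, for contradiction, that $p(\algdiamond c) = 0$. Then $c$ belongs to $\{ c' \in A \mid p(\algdiamond c') = 0\}$, and condition (\ref{condition:triples-lsc}) puts this set inside $\downset a$, so $c \leq a$, a contradiction. Hence $p(\algdiamond c) = 1$, that is, $(p,a,[F]) \in \phi_A(\algdiamond c)$.

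Diamond-strictness now follows at once: $\phi_A(\algdiamond c) = \algdiamond \phi_A(c)$ for every $c$. For lower semicontinuity of $\Fcal(A)$, note that every open set of $\Ofrak_A$ has the form $\phi_A(c)$, since the topology is the image of $\phi_A$. Then $\algdiamondclass \phi_A(c) = \phi_A(\algdiamond c)$ is open. I would also remark that this step uses only (\ref{condition:triples-lsc}), which is the dual of how (\ref{condition:pairs}) gives the direction already established.

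The main obstacle is the appeal to Lemma~\ref{tripmot}, part~1: specifically, that an $R_A$-successor $(q,b,[G])$ with $q(c)=1$ forces $c \nleq a$. This direction is immediate from (\ref{condition:relationPairsDiamond}): if $c \leq a$ then $q(c) \leq q(a) = 0$. So the argument is short, and the only care needed is to check that it works uniformly whether points are pairs or triples. It does, because neither the $F$ component nor condition (\ref{condition:relationTriplesBox}) plays any role.
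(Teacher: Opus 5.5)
Your proof is correct and follows essentially the paper's route: the paper first uses (\ref{condition:triples-lsc}) together with (\ref{condition:pairs}) to put every point in the form $(p,a_p,[F])$, then reads $q(a_p)=0$ as $q \leq p \circ \algdiamond$ to get $p(\algdiamond c)=1$. You instead apply (\ref{condition:triples-lsc}) directly to $a$ without first normalising to $a_p$, which is the same step in contrapositive form (and, as you note, does not need (\ref{condition:pairs}) for this direction); you also spell out the lower-semicontinuity and the displayed chain of equalities that the paper leaves implicit.
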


\begin{proof}
 We show $\algdiamondclass \phi_A (c) \subseteq \phi _A ( \algdiamond c)$, having already established (\ref{equation:phiDiamondLaxPres}). 
 The condition (\ref{condition:triples-lsc}) guarantees that all points have the form $(p,a_p,[F])$ for some frame character $p$ (and filter $F$). 
    Suppose $(p,a_p,[F]) \in \algdiamondclass \phi _A (c)$ for some $c \in A$. So there is $(q,a_q,[G]) \in \Pbb_A$ with $(p,a_p,F) R_A (q,a_q,[G])$ and $q(c) =1$.  The condition (\ref{condition:relationPairsLSC}) holds because $q(a_p)=0$. Therefore  
    $p(\algdiamond c)=1$ and 
    $(p,a,F) \in \phi _A(\algdiamond c)$.
\end{proof}

Wikjesekera \cite{wijesekera1990constructive}, where $A$ is taken to be a distributive lattice with modalities and the relational space constructed is on the usual prime spectrum of $A$, produces a strict morphism from $A$ to the lattice of opens. In that case, points to be constructed to witness relation instances in the analogous proofs are constructed by direct appeals to the Prime Ideal Theorem. The distinction between points and pre-points is not required. The shadow of this remains in the present work in the treatment of modally spectral frames below.

Proposition~\ref{result:prepointEquivRel} shows that, in the case of $\eqfrm$ and $\eqspq$, the relation $R_A$ is an equivalence relation.
\begin{proposition}
\label{result:prepointEquivRel}
Suppose that points are triples satisfying both (\ref{condition:triples-lsc}) and (\ref{condition:triples-usc}):
    \begin{enumerate}
        \item If $A$ satisfies both (\ref{reflexivitybox}) and (\ref{reflexivitydiamond}) and then $R_A$ is reflexive
        \item If $A$ satisfies both (\ref{symmetrydiamondbox}) and (\ref{symmetryboxdiamond}) then $R_A$ is symmetric
         \item If $A$ satisfies both (\ref{transitivitybox}) and (\ref{transitivitydiamond}) then $R_A$ is transitive.
    \end{enumerate}
\end{proposition}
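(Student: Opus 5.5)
The plan is to reduce everything to conditions on frame characters, after which each of the three clauses becomes a short monotonicity argument using the stated axioms.

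\emph{Step 1: every point is canonical.} Let $(p,a,F)$ be a point. Condition (\ref{condition:triples-lsc}) says that every $c$ with $p(\algdiamond c)=0$ lies below $a$, so $a_p \leq a$. Part 3 of Lemma~\ref{result:prepointProperties} gives $a \leq a_p$. Hence $a=a_p$, and condition (\ref{condition:pairs}) then yields $p(\algdiamond a_p)=0$, so $p$ is replete. Since both (\ref{condition:triples}) and (\ref{condition:triples-usc}) are imposed, part 4 of Lemma~\ref{result:prepointProperties} gives $F=F_p$. So every point has the form $(p,a_p,F_p)$.

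\emph{Step 2: rewrite the relation.} By part~\ref{result:tripleRelationReduced} of Lemma~\ref{result:relationRelationships}, for canonical points we have $(p,a_p,F_p) R_A (q,a_q,F_q)$ if and only if both
\begin{equation*}
p\circ\algbox \leq q \qquad\text{and}\qquad q \leq p \circ \algdiamond .
\end{equation*}
I will write $p R_A q$ for this from now on. I expect Steps 1 and 2 to be the only point needing care: one must check that (\ref{condition:triples-lsc}) together with (\ref{condition:pairs}) really forces repleteness, not merely $a\le a_p$. The remaining steps are pointwise checks.

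\emph{Step 3: reflexivity.} Axiom (\ref{reflexivitybox}) gives $p(\algbox c)\leq p(c)$ by monotonicity of $p$, so $p\circ\algbox\leq p$. Axiom (\ref{reflexivitydiamond}) gives $p(c)\leq p(\algdiamond c)$, so $p\leq p\circ\algdiamond$. Hence $p R_A p$.

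\emph{Step 4: symmetry.} Assume $p\circ\algbox\leq q\leq p\circ\algdiamond$; we must show $q\circ\algbox\leq p\leq q\circ\algdiamond$.
\begin{itemize}
\item If $q(\algbox c)=1$, then $p(\algdiamond\algbox c)=1$ by the second inequality, and (\ref{symmetrydiamondbox}) gives $p(c)=1$.
\item If $p(c)=1$, then (\ref{symmetryboxdiamond}) gives $p(\algbox\algdiamond c)=1$, and the first inequality gives $q(\algdiamond c)=1$.
\end{itemize}

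\emph{Step 5: transitivity.} Assume $p R_A q$ and $q R_A r$.
\begin{itemize}
\item If $p(\algbox c)=1$, then (\ref{transitivitybox}) gives $p(\algbox\algbox c)=1$. Hence $q(\algbox c)=1$, and then $r(c)=1$.
\item If $r(c)=1$, then $q(\algdiamond c)=1$, so $p(\algdiamond\algdiamond c)=1$. Axiom (\ref{transitivitydiamond}) then gives $p(\algdiamond c)=1$.
\end{itemize}
So $p\circ\algbox\leq r\leq p\circ\algdiamond$, that is, $p R_A r$.
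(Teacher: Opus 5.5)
Your proof is correct, but it takes a different route from the paper's. It is in fact the simplification the paper mentions immediately after its proof and chooses not to carry out.

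You first collapse every point to canonical form $(p,a_p,F_p)$. Your derivation of $a_p\leq a$ directly from (\ref{condition:triples-lsc}) is right, and repleteness then follows from (\ref{condition:pairs}). You then use part~\ref{result:tripleRelationReduced} of Lemma~\ref{result:relationRelationships} to rewrite $R_A$ as $p\circ\algbox\leq q\leq p\circ\algdiamond$. After that, each clause is a short monotonicity check, exactly as for the classical canonical model.

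The paper instead works with the raw components of the triples. It uses (\ref{condition:triples-usc}) to turn $c\in F$ into $p(\algbox c)=1$. For the diamond halves of symmetry and transitivity, it invokes the lower-semicontinuity of $R_A$ from Lemma~\ref{result:lscImpliesDiamoindStrict}, namely $\algdiamondclass\phi_A(c)=\phi_A(\algdiamond c)$, rather than the character inequality $q\leq p\circ\algdiamond$.

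Your version is shorter and makes the parallel with classical correspondence theory transparent. The paper's version tracks which pre-point condition each clause actually uses. For instance, its reflexivity argument needs only (\ref{condition:pairs}) and (\ref{condition:triples-usc}), never (\ref{condition:triples-lsc}). Your reduction, by contrast, uses all of (\ref{condition:pairs}), (\ref{condition:triples}), (\ref{condition:triples-lsc}) and (\ref{condition:triples-usc}) up front for every clause, which hides this finer dependence.
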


\begin{proof}
\begin{itemize}
(Reflexivity) We show that the conditions for $(p,a,F) R_A (p,a,F)$ are satisfied for any $(p,a,F) \in \Pbb_A$.  First, $p(a) \leq p(\algdiamond a) = 0$ using (\ref{reflexivitydiamond}). Let $c \in F$. By (\ref{condition:triples-usc}) and (\ref{reflexivitybox}), $1 = p(\algbox c) \leq p(c)$. 
Therefore 
$F \subseteq \{ c \in A \mid p(c)=1 \}$. 

(Symmetry)
    Suppose $(p,a,F)R_A(q,b,G)$. We have $p(b) \leq p(\algbox\algdiamond b) \leq  q (\algdiamond b) = 0$ since (\ref{condition:relationPairsBox}) holds and so does the box-diamond symmetry axiom (\ref{symmetryboxdiamond}). 

    For $(q,b,G)R_A (p,a,F)$, it remains to show that 
    $G \subseteq \{ c \in A \mid p(c) =1 \}$ as in (\ref{condition:relationTriplesBox}). 
    Let $c \in G$. 
    By upper-semicontinuity we have $q(\algbox c) =1$, and so 
    $(q,b,G) \in \phi_A( \algbox c)$. 
    By lower-semicontinuity of $R_A$, we have $(p,a,F) \in \algdiamond (\phi_A( \algbox c)) = \phi_A( \algdiamond \algbox c)$. Therefore $1=p(\algdiamond \algbox c) \leq p(c) $ by (\ref{symmetrydiamondbox}). 

(Transitivity)
    Suppose $(p,a,F) R_A (q,b,G)$ and $(q,b,G) R_A (r,c,H)$. 

    If $r(a)=1$, then diamond-strictness/lower-semicontinuity  applied twice gives 
    $(p,a,F) \in \phi_A (\algdiamond\algdiamond a)$. 
    We then have  
    $1= p(\algdiamond\algdiamond a) \leq p(\algdiamond a)$ using (\ref{transitivitydiamond}), which contradicts condition (\ref{condition:pairs}) for $(p,a,F)$ to be a pre-point. Therefore $r(a)=0$.

    For $(p,a,F)R_A (r,c,H)$, it remains to show that $F\subseteq \{ d \in A \mid r(d) = 1 \}$ as in (\ref{condition:relationTriplesBox}). 
    It suffices to show $F \subseteq G$. Let $d \in F$. Therefore
    $p(\algbox \algbox d) \geq p(\algbox d) =1$ 
    using (\ref{condition:triples-usc}) and (\ref{transitivitybox})
    As (\ref{condition:relationPairsBox}) holds by Proposition~\ref{result:relationRelationships}, we have $q( \algbox d)=1$, and so $d\in G$ by (\ref{condition:triples-usc}).

\end{itemize}
\end{proof}

The proof can be simplified using the canonical representation of pre-points as frame points and part 3 of Proposition~\ref{result:relationRelationships}, but this is perhaps less informative about how the conditions on pre-points, the relation $R_A$, and the equivalence frame axioms work together.
The reflexivity, symmetry and transitivity proofs are independent of one another and use different modal axioms. This is in the spirit of traditional correspondence theory for modal logics. The main difference is that only one axiom for each of the reflexivity, symmetry and transitivity properties is needed in the classical case: the two forms presented here are equivalent in classical modal logic. This mirrors  correspondence results in an alternative semantics of intuitionistic modal logics \cite{simpson94}.
It is possible to produce, without continuity assumptions, a similar result for reflexivity using a category of modal frames satisfying both (\ref{reflexivitybox}) and (\ref{reflexivitydiamond}) and a sub-category of $\relsp$ with only reflexive relations on spaces. 

\begin{proposition}
    Suppose $A$ is serial. If points are all triples $(p,a,F)$ satisfying (\ref{condition:triples-usc}), then 
    $R_A$ is serial. 
\end{proposition}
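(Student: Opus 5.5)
The plan is to produce an $R_A$-successor for an arbitrary point $(p,a,F)\in\Pbb_A$ directly from a modal point condition, using seriality (\ref{ax:serial}) together with (\ref{m1}) or (\ref{m4}) to show that the hypothesis of that condition holds. No new construction of points is needed: the point conditions already supply a witness $(q,b,G)\in\Pbb_A$ with $(p,a,F)R_A(q,b,G)$ once we exhibit a suitable element $c$.

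The main route uses (\ref{condition:pointsDiamond}) with $c=\top$. We must show $\top\nleq a$. Suppose instead $a=\top$. Since $(p,a,F)$ is a pre-point, (\ref{condition:pairs}) gives $p(\algdiamond\top)=0$. On the other hand, (\ref{m1}) and the fact that $p$ is a frame character give $p(\algbox\top)=1$. Seriality then gives $1=p(\algbox\top)\leq p(\algdiamond\top)=0$, a contradiction. Hence $\top\nleq a$, and (\ref{condition:pointsDiamond}) yields a point $(q,b,G)$ with $(p,a,F)R_A(q,b,G)$ and $q(\top)=1$. This witnesses seriality of $R_A$. This argument uses only (\ref{condition:pointsDiamond}), which is imposed in every case of the construction.

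The stated hypothesis (\ref{condition:triples-usc}) suggests the intended argument is the dual one via (\ref{condition:pointsBox}) with $c=\bot$, which I would present as well. If $\bot\in F$, then (\ref{condition:triples-usc}) gives $\bot\in F_p$, that is, $p(\algbox\bot)=1$. Seriality then gives $p(\algdiamond\bot)=1$, whereas (\ref{m4}) forces $\algdiamond\bot=\bot$ and so $p(\algdiamond\bot)=0$. Hence $\bot\notin F$, and (\ref{condition:pointsBox}) supplies a point $(q,b,G)$ with $(p,a,F)R_A(q,b,G)$ (and $q(\bot)=0$, which is automatic).

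The only step needing care is confirming that the relevant point condition applies in the case at hand: (\ref{condition:pointsBox}) is required only when the target category has pq-morphisms. This is why I prefer to lead with the (\ref{condition:pointsDiamond}) argument, which is available in every case and so does not depend on the case distinction. I do not expect a genuine obstacle beyond this bookkeeping.
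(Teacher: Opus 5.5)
Your proof is correct, and your leading argument takes a genuinely different route from the paper's.

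The paper uses (\ref{condition:pointsBox}) with $c=a$. From (\ref{condition:pairs}) and seriality it gets $p(\algbox a)=0$, so $a\notin F_p$. Then (\ref{condition:triples-usc}) gives $a\notin F$, and (\ref{condition:pointsBox}) yields an $R_A$-successor. Your second argument is essentially this one with $c=\bot$, using (\ref{m4}) in place of (\ref{condition:pairs}).

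Your main argument instead applies (\ref{condition:pointsDiamond}) with $c=\top$. You note that (\ref{m1}) and (\ref{ax:serial}) give $\top\leq\algbox\top\leq\algdiamond\top$, so $p(\algdiamond\top)=1$, and then (\ref{condition:pairs}) rules out $a=\top$. Since (\ref{condition:pointsDiamond}) is imposed in every row of Table~\ref{table:pre-points}, this argument needs neither triples, nor (\ref{condition:triples-usc}), nor (\ref{condition:pointsBox}). It shows $R_A$ is serial whenever $A$ is serial, for pairs and triples alike. So the hypothesis of the proposition is redundant. Your argument also covers, with no spectrality assumption, the case the paper defers to modally spectral frames in the remark following the proposition.

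The paper's route fits its pattern of pairing each modal axiom with the filter component and the box point condition. Yours is shorter and strictly more general.
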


\begin{proof}
Since $p(\algdiamond a) = 0$, by seriality (\ref{ax:serial}) we have $p(\algbox a) = 0$ and so $a \notin F=F_p$. As $a \notin F$, by the point condition (\ref{condition:pointsBox}) there is a point $(q,b,G)$ with $(p,a,F) R_A (q,b, G)$. 
\end{proof}

A proof of seriality that applies in the cases of points as pairs and triples without the restriction (\ref{condition:triples-usc}) can be given in the case where $A$ is a modally spectral frame as discussed below.

\subsection{Adjunction}

\begin{theorem}
\label{result:adjunction}
There is a contravariant adjunction between each of the pairs of categories, consisting of a category of relational spaces and a category of modal frames, shown in Table~\ref{table:pre-points}. 
In each case, the assignment
$A \mapsto ( \mathbb{P} _A , \mathfrak{O} _A, \mathcal{R} _A )$ 
is the object part of the contravariant functor $\Fcal$  adjoint to 
$\Omega$. 
The unit on the modal frame side is given by $\phi _A$.
\end{theorem}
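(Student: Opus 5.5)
The plan is to prove the adjunction uniformly, by exhibiting for each modal frame $A$ the universal arrow $\phi_A : A \to \Omega(\Fcal(A))$. We already know from the preceding lemmas that $\phi_A$ is a modal frame morphism. In the rows where the source category has strict morphisms (box-strict, diamond-strict, or both), we also know it has the required strictness: Lemma~\ref{result:uscTripleImplyBoxStrict} gives box-strictness and Lemma~\ref{result:lscImpliesDiamoindStrict} gives diamond-strictness. The same two lemmas, together with Proposition~\ref{result:prepointEquivRel}, show that $\Fcal(A)$ lies in the right category of spaces (l.s.c., u.s.c., continuous, equivalence).

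The universal property reads as follows. Let $(X,\Ocal,R)$ be a relational space in the relevant category and $h : A \to \Omega X$ a morphism in the relevant category of modal frames. We must show there is a unique morphism of relational spaces $\hat h : X \to \Fcal(A)$ with $\Omega(\hat h)\circ\phi_A = h$, i.e.\ $\hat h\invim(\phi_A(c)) = h(c)$ for all $c$.

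\textbf{Construction of $\hat h$.} For $x \in X$, let $p_x(c) = 1$ iff $x \in h(c)$; this is a frame character. Set $a_x = \bigvee\{c \mid h(c)\cap R\dirim(x) = \emptyset\}$ and, for triples, $F_x = \{c \mid R\dirim(x) \subseteq h(c)\}$, and define $\hat h(x) = (p_x,a_x,[F_x])$.

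\begin{enumerate}
\item \textbf{$a_x$ is the largest admissible element.} $h$ preserves joins, and $R\dirim(x)$ misses each $h(c)$ in the join, so $h(a_x)\cap R\dirim(x)=\emptyset$. Hence $a_x$ is the largest $c$ with $h(c)$ disjoint from $R\dirim(x)$.
\item \textbf{Pre-point conditions.}
\begin{enumerate}
\item[(\ref{condition:pairs})] Lax preservation of $\algdiamond$ gives $h(\algdiamond a_x) \subseteq \algdiamondclass h(a_x)$, which does not contain $x$.
\item[(\ref{condition:triples})] Lax preservation of $\algbox$ gives $F_{p_x}\subseteq F_x$.
\item[(\ref{condition:triples-lsc})] In the l.s.c.\ cases, $h$ is diamond-strict and $\algdiamond$ is $\algdiamondclass$ on opens. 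So $p_x(\algdiamond c)=0$ forces $h(c)\cap R\dirim(x)=\emptyset$, hence $c\le a_x$.
\item[(\ref{condition:triples-usc})] In the u.s.c.\ cases this follows dually from box-strictness.
\end{enumerate}
\item \textbf{The image is contained in $\Pbb_A$.} Because $\Pbb_A$ is the largest set of pre-points satisfying the modal point conditions, it suffices to show that $\hat h(X)$ satisfies those conditions relative to itself.
\begin{enumerate}
\item[(\ref{condition:pointsDiamond})] If $c\nleq a_x$, some $x'$ with $xRx'$ has $x'\in h(c)$, so $p_{x'}(c)=1$.
\item[(\ref{condition:pointsBox})] If $c\notin F_x$, some $x'$ with $xRx'$ has $x' \notin h(c)$.
\end{enumerate}
In both cases we then need $\hat h(x) R_A \hat h(x')$ whenever $xRx'$. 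This holds since $x'\notin h(a_x)$ and $F_x\subseteq\charop(p_{x'})$; for pairs, use $p_x\circ\algbox\le p_{x'}$ via lax box preservation. This gives (\ref{relmorph}) at the same time.
\end{enumerate}

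\textbf{Morphism properties of $\hat h$.} Continuity and the equation $\hat h\invim\phi_A(c)=h(c)$ are immediate from the definition of $p_x$. For the p-morphism condition (\ref{pmorph}), suppose $\hat h(x) R_A (q,b,[G])$ with $q(c)=1$. Then $c\nleq a_x$ (otherwise $q(a_x)=0$ would give $q(c)=0$), so by the maximality of $a_x$ some $R$-successor of $x$ lies in $h(c)$. The q-condition (\ref{qmorph}) is dual: $q(c)=0$ together with $F_x\subseteq\charop(q)$ gives $c\notin F_x$.

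\textbf{Uniqueness.} Since $\Omega(g)\circ\phi_A = h$ determines the frame character component, uniqueness reduces to showing that any morphism $g$ with this property has $a$- and $F$-components equal to $a_x$ and $F_x$. This uses the p- and q-conditions on $g$ together with Lemma~\ref{tripmot}: $c\le a$ iff $R_A\dirim(g(x))\cap\phi_A(c)=\emptyset$, and (\ref{pmorph}) with (\ref{relmorph}) transfer this to $h(c)\cap R\dirim(x)=\emptyset$; similarly for $F$ via (\ref{qmorph}).

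\textbf{Functoriality and the main obstacle.} Functoriality of $\Fcal$ on morphisms then follows formally from the universal arrows. I expect the delicate step to be this uniqueness argument, and in particular the pair cases. There $R_A$ is defined via $p\circ\algbox\le q$ rather than $F$, so Lemma~\ref{tripmot} supplies only the $a$-component, which happily is all that pairs carry. The other point needing care is to check, row by row of Table~\ref{table:pre-points}, that exactly the strictness available on $h$ matches the pre-point conditions imposed.
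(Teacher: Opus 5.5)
Your proposal is correct and follows essentially the same route as the paper. Your $\hat h$ is the paper's $f^{\#}$, $\phi_A$ plays the role of the universal arrow, and the pre-point conditions, relation preservation, membership of the image in $\Pbb_A$ (by maximality of $\Pbb_A$), the p-/q-morphism properties and uniqueness are all checked the same way, except that your uniqueness step replaces the four contradiction arguments of Lemma~\ref{strmorphunique} with the two biconditionals of Lemma~\ref{tripmot}, transported along $g$ via (\ref{relmorph}), (\ref{pmorph}) and (\ref{qmorph}).
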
 

Formally, some of the functors $\Omega$ were defined (see Table~\ref{table:OmegaFunctorsNEW}) to have codomains that are subcategories of the corresponding modal frame categories appearing in Table~\ref{table:pre-points}. The theorem should be read taking the necessary expansion of the codomains of $\Omega$. This occurs because the relevant versions of $\Fcal$ enforce semicontinuity conditions (as in Lemmas~\ref{result:uscTripleImplyBoxStrict} and \ref{result:lscImpliesDiamoindStrict}) on relational spaces, without requiring further structure on modal frames. Semicontinuity conditions are achieved by modifying $\Fcal$ to restrict to appropriate subspaces using conditions~(\ref{condition:triples-lsc}) and (\ref{condition:triples-usc}).

Take any relational space relational space $(X,\mathcal{O},R)$ and modal frame $A$. For each of the categories of relational spaces under consideration in Table~\ref{table:pre-points}, consider a morphism $f:A \longrightarrow \Omega (X, \Ocal , R)$ on the frame side.

\begin{definition} 
For each $x \in X$, define  
    \begin{align}
    p_x (b) = 1 &\mbox{ iff }  x \in f(b)\\
    a_x &= \bigvee \{ c \in A \mid R\dirim(x) \cap f(c) =\emptyset \} \\
    F_x &= \{ c \in A \mid R\dirim(x) \subseteq f(c) \}
    \end{align}
and, according to the relevant form of pre-point for each category of relational spaces and modal frames, define 
\begin{equation}
\label{definition:fhashTriple}
f^\hash (x) = (p_x , a_x, [F_x]) .
\end{equation}
\end{definition}

\begin{lemma}
The tuple $f^\hash (x)$ is a pre-point according to the appropriate conditions for each of the pairs of categories in Table~\ref{table:pre-points}.
\end{lemma}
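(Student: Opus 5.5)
The approach is to check the pre-point conditions one at a time. Each condition follows by unwinding the definitions of $p_x$, $a_x$ and $F_x$ and combining the lax preservation inequalities (\ref{boxmorphcond}) and (\ref{dimorphcond}) for $f$ with Definition~\ref{def:OmegaAssignments}. Semicontinuity of $(X,\Ocal,R)$ is brought in only for conditions (\ref{condition:triples-lsc}) and (\ref{condition:triples-usc}).

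\textbf{Basic components.} First, $p_x$ is the composite of the frame morphism $f$ with the frame character of $\Ocal$ given by membership of $x$. Hence $p_x$ is a frame character. Next, $F_x$ is a filter, because $f$ preserves $\top$ and binary meets and is monotone.

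\textbf{Condition (\ref{condition:pairs}).} Since $f$ preserves arbitrary joins, $f(a_x)=\bigcup\{f(c)\mid R\dirim(x)\cap f(c)=\emptyset\}$, and so $R\dirim(x)\cap f(a_x)=\emptyset$. Then
\[
f(\algdiamond a_x)\subseteq \algdiamond f(a_x)\subseteq \algdiamondclass f(a_x),
\]
and $x$ does not belong to the right-hand side. Therefore $p_x(\algdiamond a_x)=0$.

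\textbf{Condition (\ref{condition:triples}).} Suppose $p_x(\algbox c)=1$. Then
\[
x\in f(\algbox c)\subseteq \algbox f(c)\subseteq\algboxclass f(c),
\]
so $R\dirim(x)\subseteq f(c)$, that is, $c\in F_x$. This gives $F_{p_x}\subseteq F_x$.

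\textbf{Condition (\ref{condition:triples-lsc}).} Here $(X,\Ocal,R)$ is lower-semicontinuous, so $\algdiamond f(c)=\algdiamondclass f(c)$. I intend to use diamond-strictness, $f(\algdiamond c)=\algdiamond f(c)$. If $p_x(\algdiamond c)=0$, then $x\notin\algdiamondclass f(c)$, so $R\dirim(x)\cap f(c)=\emptyset$ and $c\leq a_x$.

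\textbf{Condition (\ref{condition:triples-usc}).} Here $(X,\Ocal,R)$ is upper-semicontinuous and $f$ is box-strict. If $c\in F_x$, then
\[
x\in\algboxclass f(c)=\algbox f(c)=f(\algbox c),
\]
so $p_x(\algbox c)=1$. This gives $F_x\subseteq F_{p_x}$.

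\textbf{Expected obstacle.} I expect the main difficulty to be the strictness used for (\ref{condition:triples-lsc}) and (\ref{condition:triples-usc}). A merely lax morphism gives only the inclusion $f(\algdiamond c)\subseteq\algdiamond f(c)$, which points the wrong way. For rows whose frame category has diamond-strict (respectively box-strict) morphisms, the needed equality is available directly. For the row $\mfrm$/$\relspl$, I would first try to obtain diamond-strictness of $f$ from Lemma~\ref{results:Omega} together with the expanded-codomain reading of Theorem~\ref{result:adjunction}. If that does not work, I would restrict the claim for that row to diamond-strict $f$.
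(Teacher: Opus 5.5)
Your proposal is correct and matches the paper's proof essentially step for step: lax preservation of $\algdiamond$ and $\algbox$ yields (\ref{condition:pairs}) and (\ref{condition:triples}), while diamond-strictness with lower-semicontinuity, respectively box-strictness with upper-semicontinuity, yields (\ref{condition:triples-lsc}) and (\ref{condition:triples-usc}). For the $\mfrm$/$\relspl$ row, your fallback of assuming $f$ diamond-strict is exactly what the paper tacitly does (that row appears as $\lmfrmds$ in Tables~\ref{table:OmegaFunctorsNEW} and~\ref{table:spatiality}); your first idea cannot work, since Lemma~\ref{results:Omega} only constrains maps of the form $\Omega g$, and a merely lax $f$ can genuinely violate (\ref{condition:triples-lsc}).
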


\begin{proof}
    If $p_x (\algdiamond a_x) = 1$, then $x \in f(\algdiamond a_x) \subseteq \algdiamond f(a_x)$. Then there is some $y \in X$ such that $xRy$ and $y \in f(a_x)$. However,  $f(a_x) = \bigcup \{f(c) \mid R\dirim (f(c)) = \emptyset \}$. 
    Thus $p_x (\algdiamond a_x) = 0$ as required for the pre-point condition (\ref{condition:pairs}).

    Let $p_x(\algbox c)=1$. Then $x \in f(\algbox c) \subseteq \algbox f(c)$. For all $y$, if $xRy$ then $y \in f(c)$. Hence $c \in F_x$. 
    Therefore condition (\ref{condition:triples}) holds.

    Assuming $f$ to be diamond-strict and $(X,\Ocal, R)$ to be lower-semicontinuous, it is straightforward to verify that for any $c \in A$, that  
    $p_x (\algdiamond c) = 0$ implies $c \leq a_x$, so that condition (\ref{condition:triples-lsc}) is satisfied in this case.

    Assuming $f$ to be box-strict and $(X,\Ocal, R)$ to be upper-semicontinuous, we have that $c \in F_x$ iff $R\dirim (x) \subseteq f(c)$ iff $x \in \algbox f(c)$ iff $x \in f(\algbox c)$ iff $p_x (\algbox c) =1$. Therefore $F_x$ satisfies condition (\ref{condition:triples-usc}) in this case.
\end{proof}

For the cases where $f$ is diamond-strict and $(X, \Ocal ,R)$ is lower-semicontinuous, the component $a_x$ is the canonical element $a_{p_x}$ associated with $p_x$ as in (\ref{canonicalElement}). If $f$ is box-strict and $(X, \Ocal ,R)$ is upper-semicontinuous, then the component $F_x$ is the canonical filter $F_{p_x}$ associated with $p_x$ as in (\ref{canonicalFilter}). 

\begin{lemma}
\label{result:fHashRelPres}
If $xRy$ then $f^{\#}(x) R_A f^{\#}(y)$.
\end{lemma}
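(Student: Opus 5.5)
Assume $xRy$ and write $f^\hash(x) = (p_x,a_x,[F_x])$ and $f^\hash(y) = (p_y,a_y,[G_y])$. The plan is to check, case by case, the relation conditions that define $R_A$ for the relevant form of pre-point:
\begin{itemize}
\item For triples we need (\ref{condition:relationPairsDiamond}) and (\ref{condition:relationTriplesBox}).
\item For pairs we need (\ref{condition:relationPairsDiamond}) and (\ref{condition:relationPairsBox}).
\end{itemize}
Condition (\ref{condition:relationPairsDiamond}) is common to both cases, so we verify it once. The whole argument unfolds the definitions of $p_x$, $a_x$ and $F_x$ and uses only the lax preservation properties (\ref{boxmorphcond}) and (\ref{dimorphcond}) of $f$. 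No strictness or semicontinuity is needed here, so the lemma holds uniformly across all rows of Table~\ref{table:pre-points}.

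First, (\ref{condition:relationPairsDiamond}), i.e.\ $p_y(a_x)=0$, which amounts to $y\notin f(a_x)$. Since $f$ is a frame morphism, it preserves joins, so
\begin{equation*}
f(a_x)=\bigcup\{ f(c) \mid R\dirim(x)\cap f(c)=\emptyset\} .
\end{equation*}
If $y\in f(a_x)$, then $y\in f(c)$ for some $c$ with $R\dirim(x)\cap f(c)=\emptyset$. But $y\in R\dirim(x)$ because $xRy$, which is a contradiction. Hence $p_y(a_x)=0$.

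Second, for triples, (\ref{condition:relationTriplesBox}) asks that $F_x\subseteq\charop(p_y)$. Take $c\in F_x$; then $R\dirim(x)\subseteq f(c)$. Since $y\in R\dirim(x)$, we get $y\in f(c)$, i.e.\ $p_y(c)=1$.

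For pairs, (\ref{condition:relationPairsBox}) asks that $p_x(\algbox c)\le p_y(c)$ for all $c$. Suppose $p_x(\algbox c)=1$. Then
\begin{equation*}
x\in f(\algbox c)\subseteq \algbox f(c)\subseteq \algboxclass f(c)
\end{equation*}
by (\ref{boxmorphcond}) and because the interior is contained in the set. Since $xRy$, this gives $y\in f(c)$, so $p_y(c)=1$. Alternatively, one can deduce this from the triple case together with the inclusion $F_{p_x}\subseteq F_x$ established in the preceding lemma, via part~1 of Lemma~\ref{result:relationRelationships}.

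There is no real obstacle. The only points needing care are using join preservation of $f$ to rewrite $f(a_x)$ as a union, and making sure the correct relation condition is checked in each of the pair and triple cases.
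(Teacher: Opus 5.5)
Your proof is correct and follows the paper's argument essentially step for step: $p_y(a_x)=0$ comes from writing $f(a_x)$ as the union of those $f(c)$ with $R\dirim(x)\cap f(c)=\emptyset$, the triple condition from $y\in R\dirim(x)$, and the pair condition from $f(\algbox c)\subseteq\algbox f(c)$. Your alternative for the pair case, via $F_{p_x}\subseteq F_x$ and part~1 of Lemma~\ref{result:relationRelationships}, is also valid.
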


\begin{proof}
Suppose $xRy$. 
If $p_y(a_x)=1$ then 
$y \in f(a_x) = 
        \bigcup \{ f(c) \mid R\dirim (x) \cap f(c) = \emptyset \}$. 
This is impossible, so $p_y(a_x)=0$ and condition (\ref{condition:relationPairsDiamond}) is satisfied. 

If pre-points are triples, then to  demonstrate condition (\ref{condition:relationTriplesBox}), suppose $c \in F_x$. Since $xR y$, we have $y \in f(c)$ and 
hence $p_y(c)=1$. 

If pre-points are pairs, then condition (\ref{condition:relationPairsBox}) must be shown. If $p_x (\algbox c) =1$, then $x\in f(\algbox c) \subseteq \algbox f(c)$. As $xRy$, it follows that $y \in f(c)$, but then $p_y(c)=1$.
\end{proof}

\begin{lemma} 
\label{result:fHashGivesPoints}
For all $x \in X$, $f^{\#}(x)$ is a point. That is, $f^{\#}(x) \in \mathbb{P}_A$.
\end{lemma}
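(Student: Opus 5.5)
Since $\Pbb_A$ is defined as the largest set of pre-points satisfying the applicable modal point conditions, I will argue by coinduction. The plan is to show that the image $S = \{ f^\hash(x) \mid x \in X\}$ is a set of pre-points that satisfies the modal point conditions (\ref{condition:pointsDiamond}) and, where required, (\ref{condition:pointsBox}), with all witnesses taken inside $S$. The union of $S$ with $\Pbb_A$ again satisfies the point conditions, because each condition only asks for witnesses within the set and enlarging the set keeps witnesses available. By maximality, $S \subseteq \Pbb_A$, and in particular $f^\hash(x) \in \Pbb_A$. The preceding lemma already shows that each $f^\hash(x)$ is a pre-point of the appropriate form, so only the point conditions remain to be checked.

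For (\ref{condition:pointsDiamond}), suppose $c \nleq a_x$. Since $a_x$ is the join of all $c'$ with $R\dirim(x)\cap f(c') = \emptyset$, the element $c$ is not among these, so $R\dirim(x)\cap f(c) \neq \emptyset$. Otherwise $c$ would be one of the joinands and we would have $c \leq a_x$. So there is $y$ with $xRy$ and $y \in f(c)$, that is, $p_y(c) = 1$. By Lemma~\ref{result:fHashRelPres}, $f^\hash(x) R_A f^\hash(y)$, so $f^\hash(y) \in S$ is the required witness.

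For (\ref{condition:pointsBox}), in the cases where points are triples, suppose $c \notin F_x$. By the definition of $F_x$, we have $R\dirim(x) \not\subseteq f(c)$, so there is $y$ with $xRy$ and $y \notin f(c)$, that is, $p_y(c) = 0$. Lemma~\ref{result:fHashRelPres} again gives $f^\hash(x) R_A f^\hash(y)$. (The statement of (\ref{condition:pointsBox}) writes $(p,A,F)$, which I read as the typo it presumably is for $(p,a,F)$.)

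The main obstacle is not these two verifications, which are direct because $a_x$ and $F_x$ are defined exactly as the relational data at $x$. The delicate point is making the ``largest set'' argument rigorous. I need to confirm that the point conditions are monotone in the candidate set, so that the union of all sets satisfying them is itself such a set, and hence $\Pbb_A$ is well defined as a greatest fixed point. I also need each $f^\hash(y)$ to be a pre-point in the relevant case, including (\ref{condition:triples-lsc}) and (\ref{condition:triples-usc}) where these are required; this relies on the hypotheses on $f$ and on $X$ recorded in the preceding lemma for the categories concerned.
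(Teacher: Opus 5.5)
Your proposal is correct and follows the same approach as the paper. The paper also shows that the image of $f^\hash$ is closed under (\ref{condition:pointsDiamond}) and (\ref{condition:pointsBox}), using Lemma~\ref{result:fHashRelPres} with the witnesses $f^\hash(y)$ exactly as you do. Your explicit argument that a union of sets of pre-points satisfying the point conditions again satisfies them, so that maximality of $\Pbb_A$ applies, only spells out what the paper leaves implicit.
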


\begin{proof}
Let $P=\{ f^{\#} (x) \mid x \in X \}$ be the image of $f^{\#}$. We show that $P$ is closed under the conditions defining the set of points. 

For condition  (\ref{condition:pointsDiamond}), 
suppose $(p_x,a_x,[F_x]) \in P$ and $b \nleq a_x$. Then there is $y \in f(b)$ with $xRy$. 
By Lemma~\eqref{result:fHashRelPres}, 
$f^{\#}(x) R_A  f^{\#}(y)$, 
but we also have $f^{\#}(y) = (p_y,a_y,[F_y]) \in P$ and $p_y(b)=1$ as required.

For condition  (\ref{condition:pointsBox}), where points are triples, 
suppose that $(p_x,a_x,F_x) \in P$ and $c \notin F_x$. By definition, there is some $y \in X$ with $xRy \notin f(c)$.  Hence $f^{\#}(y) \in P$ has 
$f^{\#}(x) R_A f^{\#}(y)$ and $p_y(c)=0$.
\end{proof}

The above results show that 
\begin{equation*}
  f^\hash : (X , \Ocal , R) \longrightarrow ( \Pbb _A , \Ofrak _A , R_A) ; x \mapsto f^\hash (x) 
\end{equation*}
is a continuous relational morphism. 

\begin{lemma}
Suppose that $f$ is a morphism of $\mfrm$. Then the map $f^\hash$ is a continuous p-morphism. Moreover, for the cases where pre-points are triples, $f^\hash$ is a continuous pq-morphism.
\end{lemma}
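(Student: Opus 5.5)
Three things have to be checked for $f^\hash$: continuity, the relation-preservation condition (\ref{relmorph}), and the back conditions (\ref{pmorph}) and, when pre-points are triples, (\ref{qmorph}). The key identity, which I will use throughout, is
\begin{equation*}
(f^\hash)\invim(\phi_A(c)) = \{ x \in X \mid p_x(c)=1\} = f(c).
\end{equation*}
The opens of $\Ofrak_A$ are exactly the sets $\phi_A(c)$, and each $f(c)$ is open in $\Ocal$, so $f^\hash$ is continuous. Condition (\ref{relmorph}) is Lemma~\ref{result:fHashRelPres}. Lemma~\ref{result:fHashGivesPoints} guarantees that $f^\hash$ really lands in $\Pbb_A$.

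For the p-morphism condition (\ref{pmorph}), I take $x \in X$ and a point $(q,b,[G]) \in \Pbb_A$ with $f^\hash(x) R_A (q,b,[G])$ and $(q,b,[G]) \in \phi_A(c)$. Then $q(c)=1$ and, by condition (\ref{condition:relationPairsDiamond}), $q(a_x)=0$, so $c \nleq a_x$. By the definition of $a_x$ as a join, $c$ is not among the elements $c'$ with $R\dirim(x)\cap f(c')=\emptyset$. Hence there is $x'$ with $xRx'$ and $x' \in f(c) = (f^\hash)\invim(\phi_A(c))$, which is exactly (\ref{pmorph}). Equivalently, one can argue through Lemma~\ref{tripmot}(1) applied to $f^\hash(x)$: $R_A\dirim(f^\hash(x))\cap\phi_A(c)\neq\emptyset$ gives $c\nleq a_x$. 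Note that this step needs only the definitions, not the modal-morphism inequalities, which were already used to make $f^\hash(x)$ a pre-point.

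For the q-morphism condition (\ref{qmorph}) in the triple case, suppose $f^\hash(x) R_A (q,b,G)$ with $(q,b,G)\notin\phi_A(c)$, so $q(c)=0$. Condition (\ref{condition:relationTriplesBox}) gives $F_x \subseteq \charop(q)$, so $c \notin F_x$. By the definition of $F_x$, $R\dirim(x)\not\subseteq f(c)$, so there is $x'$ with $xRx'$ and $x'\notin f(c) = (f^\hash)\invim(\phi_A(c))$.

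The only delicate point is the bookkeeping over the several cases in Table~\ref{table:pre-points}: pairs versus triples, and whether $a_x$ and $F_x$ coincide with their canonical versions. The argument above uses only the definitions of $a_x$ and $F_x$ and the relation conditions (\ref{condition:relationPairsDiamond}) and (\ref{condition:relationTriplesBox}), which hold uniformly in every case. So I expect no real obstacle beyond verifying that continuity is witnessed by the generating opens $\phi_A(c)$.
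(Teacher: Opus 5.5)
Your proposal is correct and follows the paper's proof essentially step for step. For the p-morphism condition you read off $q(a_x)=0$ and $q(c)=1$ to get $c\nleq a_x$ and hence a witness $x'\in f(c)$ with $xRx'$. For the q-morphism condition you use $F_x\subseteq\charop(q)$ and $q(c)=0$ to get $c\notin F_x$ and hence a witness $x'\notin f(c)$. The only differences are minor: you verify continuity explicitly via $(f^\hash)\invim(\phi_A(c))=f(c)$, which the paper takes from its preceding remark, and you stop at $x'\in f(c)$ without also noting $f^\hash(x) R_A f^\hash(x')$.
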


\begin{proof}
    
    Suppose $f^\hash(x) R_A (q,b,[G]) \in \phi_A (c)$ for some $c \in A$. Then $q(a_x) = 0$ and $q(c)=1$. Since $c \nleq a_x$, there is some $y \in f(c)$ such that $xRy$. But then $f^\hash (x) R_A f^\hash (y) \in \phi _A (c)$. 

    For the proof of the q-morphism property, suppose $f^\hash(x) R_A (q,b,[G]) \notin \phi_A (c)$ for some $c \in A$. Since $q(c) = 0$, we have $c \notin F_x$ by definition of $R_A$.
    There is therefore some $y \in X$ with $xRy$ and $y \notin f(c)$. Then $f^{\#}(x) R_A f^{\#}(y)$ and $f^{\#} (y) \notin \phi _A (c)$.
\end{proof}

The following lemma makes essential use of the point condition (\ref{condition:pointsDiamond}), and also the point condition (\ref{condition:pointsBox}) in the cases using continuous pq-morphisms.

\begin{lemma}
\label{strmorphunique}
If $g:(X, \mathcal{O} ,\mathcal{R} ) \rightarrow 
( \mathbb{P}_A , \mathfrak{O} _A , \mathcal{R} _A )$ is any morphism of the relational space category from Table~\ref{table:pre-points}, such that $g$ satisfies 
$g^{\leftarrow} \circ \phi _A =f$, then $g=f^{\#}$.
\end{lemma}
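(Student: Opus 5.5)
We write $g(x) = (q, b, [G])$ for an arbitrary $x \in X$ and show componentwise that $q = p_x$, $b = a_x$ and, where points are triples, $G = F_x$. Throughout we use $g\invim(\phi_A(c)) = f(c)$ for every $c \in A$.

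\emph{Frame character.} For any $c$, we have $q(c) = 1$ iff $g(x) \in \phi_A(c)$ iff $x \in g\invim(\phi_A(c)) = f(c)$ iff $p_x(c) = 1$. Hence $q = p_x$.

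\emph{Element component.} Both directions go through Lemma~\ref{tripmot}(1), which applies because $g(x) \in \Pbb_A$.
\begin{itemize}
\item To show $b \leq a_x$: by definition of $a_x$ it suffices that $R\dirim(x) \cap f(b) = \emptyset$. Suppose $xRy$ with $y \in f(b)$. Then $g(y) \in \phi_A(b)$. Since $g$ is a continuous relational morphism (\ref{relmorph}), $g(x) R_A g(y)$. Lemma~\ref{tripmot}(1) with $c = b$ gives $R_A\dirim(g(x)) \cap \phi_A(b) = \emptyset$, a contradiction.
\item To show $a_x \leq b$: by Lemma~\ref{tripmot}(1) it is enough that $R_A\dirim(g(x)) \cap \phi_A(a_x) = \emptyset$. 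Suppose instead that $g(x) R_A w \in \phi_A(a_x)$. The p-morphism property (\ref{pmorph}) with $U = \phi_A(a_x)$ yields $x'$ with $xRx'$ and $x' \in g\invim(\phi_A(a_x)) = f(a_x)$. But $f(a_x) = \bigcup\{ f(c) \mid R\dirim(x) \cap f(c) = \emptyset\}$, so $f(a_x)$ is disjoint from $R\dirim(x)$, which is a contradiction.
\end{itemize}
Hence $b = a_x$. This uses only (\ref{relmorph}), (\ref{pmorph}) and the fact that $f$ preserves joins.

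\emph{Filter component} (triples only). The argument is dual and uses Lemma~\ref{tripmot}(2), which is available because points satisfy (\ref{condition:pointsBox}).
\begin{itemize}
\item If $c \in G$, then $R_A\dirim(g(x)) \subseteq \phi_A(c)$. For $xRy$, (\ref{relmorph}) gives $g(y) \in \phi_A(c)$, so $y \in f(c)$. Thus $c \in F_x$.
\item Conversely, suppose $c \in F_x$ but $c \notin G$. Lemma~\ref{tripmot}(2) provides $w$ with $g(x) R_A w \notin \phi_A(c)$. The q-morphism property (\ref{qmorph}) with $U = \phi_A(c)$ then gives $x'$ with $xRx'$ and $x' \notin f(c)$. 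This contradicts $R\dirim(x) \subseteq f(c)$.
\end{itemize}
Hence $G = F_x$, and so $g(x) = f^\hash(x)$.

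The main obstacle is to check that Lemma~\ref{tripmot} really applies to arbitrary points in every row of Table~\ref{table:pre-points}. In particular, its ``if'' directions depend on the point conditions (\ref{condition:pointsDiamond}) and (\ref{condition:pointsBox}) holding in the form used there. We also have to confirm that in the pair cases no $F$-component needs to be matched. If Lemma~\ref{tripmot} were unavailable in some row, the fallback would be to argue directly from (\ref{condition:pointsDiamond}) and (\ref{condition:pointsBox}) together with the pre-point condition (\ref{condition:pairs}).
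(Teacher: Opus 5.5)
Your proof is correct and follows the paper's argument component by component: each inclusion uses the same property of $g$ as the paper's proof, namely (\ref{relmorph}) for $b \leq a_x$ and $G \subseteq F_x$, (\ref{pmorph}) for $a_x \leq b$, and (\ref{qmorph}) for $F_x \subseteq G$. Your appeals to Lemma~\ref{tripmot} just package the paper's direct use of (\ref{condition:pointsDiamond}), (\ref{condition:pointsBox}) and the relation conditions, and the applicability concern you raise is harmless because that lemma uses only conditions imposed on points in every row; the one cosmetic difference is that you apply the p-morphism property to $\phi_A(a_x)$ itself rather than to a witness $c \nleq b_x$ taken from the join defining $a_x$.
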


\begin{proof}
We write the proof for points that are triples. For points that are pairs, simply elide the third, filter components of triples and the corresponding parts of the proof.
For all $x \in X$, let 
$g(x)= (q_x , b_x , G_x)$. 
We will show that 
$p_x = q_x$, $a_x =b_x $ and $F _x = G _x $.

Firstly, $p_x (c)=1$ iff $x \in f(c)$ iff 
$x \in g^{\leftarrow} ( \phi _A (c))$ 
iff $g(x) \in \phi _A (c)$ iff $q_x (c)=1$, so $p_x = q_x$.

If $a_x \nleq b_x$, then there 
is a $c \nleq b_x$ such that 
$R\dirim(x) \cap f(c) = \emptyset$. 
As $(q_x , b_x , G_x) \in \mathbb{P} _A$, there is $(q,b,G) \in \mathbb{P} 
_A$ such that $(q_x , b_x , G_x) R_A  (q,b,G)$ and $q(c)=1$, that is, $(q,b,G)  \in 
\phi _A (c)$. Since $g$ is a topological p-morphism, there is some 
$y$ such that 
$xR y$ and $g(y) \in \phi _A (c)$, and so $y \in f(c)$, which is a 
contradiction. Therefore $a_x \leq b_x$. 

If $b_x \nleq a_x$, then there is a $y \in f(b_x)$ such that 
$xR y$. Then $g(x) R_A g(y)$, so $p_y(b_x)=0$, and then   
$y \notin f(b_x)$, which is a contradiction. Therefore 
$b_x \leq a_x$. 

Suppose $F_x \not \subseteq G_x$. Then there is some $c \in F_x$ such that $c \notin G_x$. As $c \notin G_x$, there is 
$(q,b,G) \in \mathbb{P} _A$ with $(q_x , b_x , G_x) R_A (q,b,G)$ and $q(c)=0$, 
that is $(q,b,G) \notin \phi _A(c)$. Since $g$ satisfies the q-morphism 
property, there is some $y \in X$ with $xR y$ and $g(y) \notin \phi _A 
(c)$, that is $y \notin f(c)$. From $c \in F_x$ and $xRy$ it follows that $y \in f(c)$, a contradiction. 

Suppose $G_x \not \subseteq F_x$. Then there is a $c \in G_x$ with 
$c \notin F_x$. Since $c \notin F_x$, there is $y \in X$ with 
$xR y$ and $y \notin f(c)$. Then $g(x)R_A g(y)$, and so $G_x \subseteq \charop (p_y)$. As $c \in G_x$, we have $p_y(c)=1$, and so $g(y) \in \phi _A(c)$, and therefore and $y \in f(c)$, a contradiction. 
\end{proof}

Explicit presentations of the action of the point constructing functor on arrows and of the topological unit are given in the 
corollary below. 

\begin{corollary}
\label{result:adjointExplicit}
The contravariant functor 
$\Fcal$
adjoint to $\Omega$ is defined on modal frame morphisms 
$f:A \rightarrow B$ by 
\begin{equation}
\label{pointFunctorOnMorphs}
 \mathcal{F} (f)(q,b,[G]) = 
( q \circ f ,                           \bigvee \{ c \in A \mid f(c) \leq b \} ,  
[\{ c \in A \mid f(c) \in G \} ]). 
\end{equation}
For $X = (X, \Ocal , R)$, the unit 
\begin{equation*}
\psi _X :  X \longrightarrow 
( \Pbb _{\Omega X} , \Ofrak _{\Omega X} , R_{\Omega X}) ; 
x \mapsto 
( p_x , a_x , [F_x] ) 
\end{equation*}
on the relational frame side of the adjunction is defined by 
\begin{equation}
\label{psiXp}
p_x (U) = 1  \mbox{ iff }  x \in U 
\end{equation}
for each $U \in \Ocal$, and 
\begin{align} 
\label{psiXa}
a_x  & =  
\interiorop{(
X \setminus R\dirim(x))
}    \\
\label{psiXF}
F_x  & =  \{ U \in \Ocal \mid R\dirim (x) \subseteq U \} . 
\end{align}
\end{corollary}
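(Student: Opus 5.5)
The plan is to extract both formulas from the universal property already proved. Theorem~\ref{result:adjunction}, together with Lemmas~\ref{result:fHashGivesPoints} and \ref{strmorphunique}, says that every frame-side morphism $f:A\to\Omega X$ has a unique transpose $f^\hash$ with $(f^\hash)\invim\circ\phi_A = f$. Two standard facts about adjunctions then do the work. The unit on the space side is $\psi_X=(\mathrm{id}_{\Omega X})^\hash$. The functor on arrows is $\Fcal(f)=(\phi_B\circ f)^\hash$ for $f:A\to B$, since it is the unique morphism $\Fcal(B)\to\Fcal(A)$ with $\Fcal(f)\invim\circ\phi_A=\phi_B\circ f$.

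\textbf{Arrows.} Take the space $X=\Fcal(B)=(\Pbb_B,\Ofrak_B,R_B)$ and the morphism $g=\phi_B\circ f:A\to\Omega(\Fcal(B))$. This is a morphism of the relevant modal frame category, because $\phi_B$ is one (with strictness given by Lemmas~\ref{result:uscTripleImplyBoxStrict} and \ref{result:lscImpliesDiamoindStrict} where needed). Then unfold the definition of $g^\hash$ at a point $(q,b,[G])$ of $\Pbb_B$.
\begin{itemize}
\item For the character: $p_{(q,b,G)}(c)=1$ iff $(q,b,G)\in\phi_B(f(c))$ iff $q(f(c))=1$. So the first component is $q\circ f$.
\item For the filter: $F_{(q,b,G)}$ is the set of $c$ with $R_B\dirim(q,b,G)\subseteq\phi_B(f(c))$. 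By Lemma~\ref{tripmot}(2) this holds iff $f(c)\in G$.
\item For the element: the second component is $\bigvee\{c\mid R_B\dirim(q,b,[G])\cap\phi_B(f(c))=\emptyset\}$. By Lemma~\ref{tripmot}(1) the condition is equivalent to $f(c)\le b$, which gives the stated join.
\end{itemize}
Lemma~\ref{tripmot} applies here because $(q,b,[G])$ is a genuine point of $\Pbb_B$. Lemmas~\ref{result:fHashGivesPoints} and \ref{strmorphunique} then guarantee that this triple is a point of $\Pbb_A$ and that it is the required morphism.

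\textbf{Unit.} Put $A=\Omega X$ and $f=\mathrm{id}$. Then $p_x(U)=1$ iff $x\in U$, and $F_x=\{U\mid R\dirim(x)\subseteq U\}$ directly from the definitions. For the element, $a_x=\bigcup\{U\in\Ocal\mid R\dirim(x)\cap U=\emptyset\}$, which is the union of all open sets contained in $X\setminus R\dirim(x)$, namely $\interiorop{(X\setminus R\dirim(x))}$. Joins in $\Ocal$ are unions, so no further argument is needed.

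\textbf{Main obstacle.} The step needing the most care is justifying that the arrow part really is $(\phi_B\circ f)^\hash$. This depends on $\phi_B\circ f$ lying in the correct category, which is diamond-strict or box-strict as appropriate, so that the lemmas establishing the pre-point conditions (\ref{condition:triples-lsc}) and (\ref{condition:triples-usc}) apply. I would settle this case by case against Table~\ref{table:pre-points}, using the strictness lemmas for $\phi_B$ where they hold. The uniqueness in Lemma~\ref{strmorphunique} then also gives functoriality.
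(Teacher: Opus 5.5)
Your proposal is correct and follows the argument the paper intends. The paper states this corollary without a separate proof, as an unfolding of the universal property: $\Fcal(f)=(\phi_B\circ f)^\hash$ and $\psi_X=(1_{\Omega X})^\hash$, computed componentwise using Lemma~\ref{tripmot}, which is how that lemma is used again in the proof of Lemma~\ref{result:sober}. You are also right to check explicitly, via Lemmas~\ref{result:uscTripleImplyBoxStrict} and \ref{result:lscImpliesDiamoindStrict}, that $\phi_B$, and hence $\phi_B\circ f$, is a morphism of the appropriate strict category.
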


The remainder of this section is an aside looking forward to potential applications of the theory, but playing no part in the rest of the present paper.

A \emph{lens} in a topological space is the intersection of a closed subset and saturated subset (an upper set in the specialization order). 
\begin{proposition}
 In the construction of a relation space $(\Pbb_A , \Ofrak_A , R_A)$ in $\relsp$ from a modal frame $A$, for each point $(p,a) \in \Pbb_A$, the image $R_A \dirim (p,a)$ is a closed set of $(\Pbb_A , \Ofrak_A)$.  

In the construction of a relation space $(\Pbb_A , \Ofrak_A , R_A)$ in $\relspq$ from a modal frame $A$, for each point $(p,a,F) \in \Pbb_A$, the image $R_A \dirim (p,a,F)$ is a lens  of $(\Pbb_A , \Ofrak_A)$. 
\end{proposition}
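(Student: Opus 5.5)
The plan is to write $R_A\dirim(p,a)$ explicitly as a subset of $\Pbb_A$ and show that its complement is open, that is, a union of sets of the form $\phi_A(c)$. By the definition of $R_A$ for pairs,
\begin{equation*}
R_A\dirim(p,a) = \{ (q,b)\in\Pbb_A \mid q(a)=0 \} \cap \{ (q,b)\in\Pbb_A \mid p\circ\algbox \leq q \},
\end{equation*}
and I treat the two factors separately. The first factor is exactly $\Pbb_A\setminus\phi_A(a)$. Since $\phi_A(a)\in\Ofrak_A$, this factor is closed immediately. This is the clause coming from (\ref{condition:relationPairsDiamond}), and it needs nothing beyond the definition of the topology as the image of $\phi_A$.

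The second factor is the clause (\ref{condition:relationPairsBox}), equivalently $F_p\subseteq\charop(q)$. Written out, it is $\bigcap_{c\in F_p}\phi_A(c)$, an intersection of opens. Such a set is saturated (an upper set in the specialization order), not evidently closed. Closed sets of $\Ofrak_A$ are lower sets for the specialization order $(q,b)\sqsubseteq(q',b')$ iff $\charop(q)\subseteq\charop(q')$, so this clause pulls in the opposite direction. The strategy is therefore to show that, \emph{inside the closed set} $\Pbb_A\setminus\phi_A(a)$, the box clause cuts out nothing that would destroy closedness. Concretely, I want to show that for $(q,b)\notin R_A\dirim(p,a)$ there is an open neighbourhood of $(q,b)$ disjoint from $R_A\dirim(p,a)$.
\begin{enumerate}
\item If $q(a)=1$, the neighbourhood $\phi_A(a)$ works.
\item Otherwise $q(a)=0$, and there is $c$ with $p(\algbox c)=1$ but $q(c)=0$. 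In this case I will look for an element $d$ with $q(d)=1$ such that no $R_A$-successor $(r,e)$ of $(p,a)$ satisfies $r(d)=1$. By Lemma~\ref{tripmot}(1), this amounts to $d\leq a$.
\end{enumerate}
To find such a $d$, I would try to use the frame axioms (\ref{m3}) and (\ref{m4}), together with $p(\algdiamond a)=0$, to relate $c$ and $a$. For instance, $\algbox c\wedge\algdiamond d\leq\algdiamond(c\wedge d)$ shows that $p(\algdiamond d)=p(\algdiamond(c\wedge d))$ for every $d$. I would then try to push elements of $\charop(q)$ below $a$ using the point condition (\ref{condition:pointsDiamond}) applied to $(p,a)$.

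I expect this second case to be the main obstacle, and I am genuinely unsure it goes through in full generality. The pre-point conditions for $\relsp$ do not tie $a$ to $F_p$; in particular $a=\bot$ is always admissible by Lemma~\ref{result:prepointProperties}. With $a=\bot$, the first factor is all of $\Pbb_A$, and closedness would then force $\bigcap_{c\in F_p}\phi_A(c)$ itself to be closed. So the crucial check is whether the point condition (\ref{condition:pointsDiamond}) on $\Pbb_A$, applied at $(p,a)$, is strong enough to guarantee this. If it is not, the argument will at least establish that $R_A\dirim(p,a)$ is the intersection of the closed set $\Pbb_A\setminus\phi_A(a)$ with a saturated set.

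For the $\relspq$ case the same decomposition applies, with (\ref{condition:relationTriplesBox}) in place of (\ref{condition:relationPairsBox}):
\begin{equation*}
R_A\dirim(p,a,F) = \bigl(\Pbb_A\setminus\phi_A(a)\bigr)\cap\bigcap_{c\in F}\phi_A(c).
\end{equation*}
The first factor is closed by the same remark as above. The second is an intersection of opens, hence saturated. The image is therefore a lens directly from the definitions, and no point conditions are needed.
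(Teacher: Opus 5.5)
Your argument for the $\relspq$ clause is correct and complete. By the definition of $R_A$ on triples, $R_A\dirim(p,a,F)=(\Pbb_A\setminus\phi_A(a))\cap\bigcap_{c\in F}\phi_A(c)$, which is a closed set intersected with an intersection of opens, hence a lens. The paper states this proposition in an aside and gives no proof, so there is nothing to compare against.

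The $\relsp$ clause is where your proposal has a gap, and the gap cannot be filled. Your case~2 is impossible in principle. Via Lemma~\ref{tripmot}(1) you correctly reduce it to finding $d\leq a$ with $q(d)=1$. But $q$ is monotone, so this gives $q(a)=1$, contradicting the case hypothesis $q(a)=0$. More globally, Lemma~\ref{tripmot}(1) says that the open sets disjoint from $R_A\dirim(p,a)$ are exactly the $\phi_A(c)$ with $c\leq a$. Hence the closure of $R_A\dirim(p,a)$ is exactly $\Pbb_A\setminus\phi_A(a)$. The image is therefore closed iff every point $(q,b)$ with $q(a)=0$ already satisfies $p\circ\algbox\leq q$, and nothing in the definitions forces this.

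The first clause is in fact false as stated. Here is a counterexample.
\begin{itemize}
\item Let $A=\{0<m<1\}$ with $\algbox$ and $\algdiamond$ both the identity. This is $\Omega$ of the Sierpi\'nski space with the identity relation, and it is even modally spectral, convex and an equivalence frame.
\item Its characters are $p_1,p_2$ with $p_1(m)=1$ and $p_2(m)=0$. Here $(p,a)R_A(q,b)$ just means $q(a)=0$ and $p\leq q$.
\item The pre-points for $\relsp$ are $(p_1,0)$, $(p_2,0)$ and $(p_2,m)$. All three satisfy (\ref{condition:pointsDiamond}), with witnesses $(p_1,0)$, $(p_1,0)$ and $(p_2,0)$ respectively. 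So $\Pbb_A$ consists of all three, with topology $\{\emptyset,\{(p_1,0)\},\Pbb_A\}$.
\item Then $R_A\dirim(p_1,0)=\{(p_1,0)\}$, which is open but not closed. Its closure is $\Pbb_A=\Pbb_A\setminus\phi_A(0)$.
\end{itemize}
This is exactly the $a=\bot$ scenario you worried about: the box factor $\bigcap_{c\in F_{p_1}}\phi_A(c)=\phi_A(m)$ is not closed. Note also that $(p_1,0)$ is a canonical point here.

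What your decomposition does prove, and what is true, is weaker. In the $\relsp$ case $R_A\dirim(p,a)$ is also a lens, namely $(\Pbb_A\setminus\phi_A(a))\cap\bigcap_{c\in F_p}\phi_A(c)$. Its closure is $\Pbb_A\setminus\phi_A(a)$, which is determined by $a$.
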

In a multi-modal and multi-relational setting, these properties enable the well-known lower (Hoare) and convex (topological Egli-Milner-Plotkin) powerdomain orders \cite{AbJ} (generalizing to avoid the requirement of compactness of lenses) to be employed to order the indexed family of relations constructed from an indexed family of pairs of modalities. 

\section{Duality}
\label{sec:duality}

The contravariant adjunction between $\topcat$ and $\frm$ restricts to a duality between the categories of sober spaces and spatial locales \cite{stonespaces}. 
We show how similar results work for each of the space/frame pairs of categories in Table~\ref{table:pre-points}, extending results from \cite{hil00}

\begin{definition}
For each category $\Ccal$ of modal frames in Table~\ref{table:pre-points}, the modal frame $A$ is \emph{$\Ccal$-spatial} if $\phi _A$ is an 
isomorphism.

For each category $\Dcal$ of relational spaces in Table~\ref{table:pre-points}, a relational space $X= (X,\Ocal ,R)$ is \emph{$\Dcal$-sober} if $\psi _X$ is an isomorphism.
\end{definition}

Let $\Ccal$, $\Dcal$ be a categories appearing as a pair in Table~\ref{table:pre-points}, where $\Ccal$ is the category of modal frames and $\Dcal$ is the matched category of relational spaces in the adjunction. 

\begin{theorem}
\label{pqdual}
There is a duality between the full subcategories of $\Ccal$ and 
$\Dcal$ whose objects lie in the images of the functors 
$\Omega      : \Dcal \longrightarrow \Ccal$ and
$\mathcal{F} : \Ccal \longrightarrow \Dcal$, respectively.
\end{theorem}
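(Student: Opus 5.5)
The plan is to use the standard categorical fact: for any (contravariant) adjunction, the unit and counit are isomorphisms precisely on the objects fixed by the adjunction. The adjunction restricts to an equivalence between the full subcategories of $\Ccal$-spatial modal frames and $\Dcal$-sober relational spaces. These subcategories coincide with the images of $\Omega$ and $\Fcal$, up to isomorphism, as soon as the triangle identities give the idempotency statements below. So the proof reduces to two claims: $\phi_{\Omega X}$ is an isomorphism for every $X$ in $\Dcal$, and $\psi_{\Fcal A}$ is an isomorphism for every $A$ in $\Ccal$.

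First I invoke Theorem~\ref{result:adjunction}, with unit $\phi_A$ and the unit $\psi_X$ of Corollary~\ref{result:adjointExplicit}. The triangle identities give
\begin{equation*}
\Omega(\psi_X)\circ\phi_{\Omega X}=\mathrm{id}_{\Omega X}
\qquad\text{and}\qquad
\Fcal(\phi_A)\circ\psi_{\Fcal A}=\mathrm{id}_{\Fcal A}.
\end{equation*}
Hence $\phi_{\Omega X}$ is a split mono and $\psi_{\Fcal A}$ is a split mono. It therefore suffices to show that each is surjective, or equivalently that the retraction is injective.

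For $\phi_{\Omega X}$, the map $\Omega(\psi_X)=\psi_X\invim$ sends $\phi_{\Omega X}(U)$ to $U$. Surjectivity of $\phi_{\Omega X}$ holds because every open set of $\Pbb_{\Omega X}$ has the form $\phi_{\Omega X}(U)$ by the definition of $\Ofrak_{\Omega X}$. Thus $\phi_{\Omega X}$ is a bijective modal frame morphism. Its inverse $\psi_X\invim$ is a frame map, and it is a modal frame morphism because $\Omega(\psi_X)$ lies in $\Ccal$. Since the inverse is two-sided, the lax inequalities in both directions become equalities, so $\phi_{\Omega X}$ is strict and an isomorphism in $\Ccal$.

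For $\psi_{\Fcal A}$, the retraction $\Fcal(\phi_A)$ must be shown injective on $\Pbb_{\Omega\Fcal A}$. Given points $(p,a,[F])$ of $\Omega\Fcal A$, formula (\ref{pointFunctorOnMorphs}) shows that $\Fcal(\phi_A)$ composes $p$ with $\phi_A$ and pulls $a$ and $F$ back along $\phi_A$. Because $\phi_A$ is surjective onto $\Ofrak_A$, $p$ is determined by $p\circ\phi_A$. The filter $F$ is determined as $\{\phi_A(c)\mid c\in\{c\mid \phi_A(c)\in F\}\}$. Since $\phi_A$ is order-reflecting up to its image, $a$ is recovered as $\phi_A$ of the join $\bigvee\{c\mid \phi_A(c)\le a\}$, using that $\phi_A$ preserves joins and is onto. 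Thus $\Fcal(\phi_A)$ is injective, so $\psi_{\Fcal A}$ is a bijection with inverse $\Fcal(\phi_A)$.

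Both $\psi_{\Fcal A}$ and $\Fcal(\phi_A)$ are continuous pq- (or p-) morphisms, so the bijection is an isomorphism in $\Dcal$; in particular it is a homeomorphism preserving and reflecting the relation. I expect the main obstacle to be the recovery of the $a$ component. One must check that $\phi_A(\bigvee\{c\mid\phi_A(c)\le a\})=a$ in $\Ofrak_A$, and that this is compatible with the pre-point conditions (\ref{condition:triples-lsc}) in the lower-semicontinuous cases. I would handle this by working directly with surjectivity of $\phi_A$ and Lemma~\ref{tripmot}, which characterises $a$ through $R_A$. Finally, a standard argument shows that the full subcategories of fixed objects are dually equivalent via the restricted $\Omega$ and $\Fcal$, and that they coincide with the images up to isomorphism.
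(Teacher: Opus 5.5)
Your proposal is correct. Its overall structure matches the paper's: the theorem is reduced to $\phi_{\Omega X}$ being an isomorphism (Lemma~\ref{result:spatial}) and $\psi_{\Fcal A}$ being an isomorphism (Lemma~\ref{result:sober}). Your treatment of the first half is exactly the paper's: the triangle identity, surjectivity of $\phi_{\Omega X}$, and the inverse $\Omega(\psi_X)$.

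The second half takes a different route. The paper proves $\psi_{\Fcal A}\circ\Fcal(\phi_A)=1$ by computing components. For $x=\Fcal(\phi_A)(p,U,F)=(p\circ\phi_A,e,K)$ it first checks $p_x=p$. It then uses Lemma~\ref{tripmot}, and hence the modal point conditions on $x$, to show that $\interiorop{(\Pbb_A\setminus R_A\dirim(x))}=U$ and $\{V \mid R_A\dirim(x)\subseteq V\}=F$. You instead show that $\Fcal(\phi_A)$ is injective, using only formula (\ref{pointFunctorOnMorphs}) and the fact that $\phi_A$ is a surjective frame map. The triangle identity $\Fcal(\phi_A)\circ\psi_{\Fcal A}=1$ then does the rest.

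Your argument is shorter and uniform across all the cases in the table. It never touches the point conditions directly; they are absorbed into the adjunction. The paper's computation is more informative, because it shows how the $a$- and $F$-components of a point of $\Fcal\Omega\Fcal A$ are read off from the relation $R_A$.

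The obstacle you anticipate is not a real one. First, $\phi_A(e)=\bigcup\{\phi_A(c)\mid \phi_A(c)\subseteq U\}=U$ follows at once, because $\phi_A$ preserves joins and $U$ is itself $\phi_A(c_0)$ for some $c_0$. Second, condition (\ref{condition:triples-lsc}) plays no role, since you are only distinguishing points that are already given, not constructing new ones. So Lemma~\ref{tripmot} is not needed anywhere in your version.
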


The theorem follows from Lemmas \ref{result:spatial} and \ref{result:sober}.

\begin{lemma}
\label{result:spatial}
If $X = (X , \Ocal , R)$ is a relational space considered as an object of $\Dcal$, then $A=\Omega (X)$ is $\Ccal$-spatial.
\end{lemma}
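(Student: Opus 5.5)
To prove that $A=\Omega(X)$ is $\Ccal$-spatial, I will show that $\phi_A : A \longrightarrow \Omega(\Fcal(A))$ is bijective. For this I use the unit $\psi_X$ of Corollary~\ref{result:adjointExplicit} together with the triangle identity of the adjunction in Theorem~\ref{result:adjunction}. The triangle identity says that
\begin{equation*}
\Omega(\psi_X)\circ\phi_{\Omega X} = \mathrm{id}_{\Omega X}.
\end{equation*}
Concretely, for $U\in\Ocal$ we have $\psi_X\invim(\phi_A(U)) = \{x \mid p_x(U)=1\} = U$ by (\ref{psiXp}). I would check this equation directly rather than rely on the abstract adjunction. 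It needs only that $\psi_X(x)$ is a point, which is Lemma~\ref{result:fHashGivesPoints} applied with $f=\mathrm{id}_{\Omega X}$. The identity $f=\mathrm{id}$ is a morphism of the relevant category $\Ccal$, since identities are strict.

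From this equation, $\phi_A$ is injective: if $\phi_A(U)=\phi_A(V)$, then applying $\psi_X\invim$ gives $U=V$. For surjectivity, I note that the opens of $\Ofrak_A$ are by definition exactly the sets $\phi_A(c)$. Hence $\phi_A$ is onto the underlying frame of $\Omega(\Fcal(A))$, and is therefore a frame isomorphism. The same conclusion also follows from $\phi_A$ preserving joins and finite meets, together with the topology being the image of $\phi_A$.

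It remains to show that the inverse respects the modal operators. Equivalently, $\phi_A$ must be strict: $\phi_A(\algbox U)=\algbox\phi_A(U)$ and $\phi_A(\algdiamond U)=\algdiamond\phi_A(U)$. This is the main obstacle, because for the plain categories $\mfrm$ only lax preservation was proved. My approach is to use the lax inequalities already established, and get the reverse inclusions from $\Omega(\psi_X)$, which is a modal frame morphism since $\psi_X$ is a continuous p(q)-morphism. Apply $\psi_X\invim$, which is injective on the opens $\phi_A(c)$. Then $\psi_X\invim(\algbox\phi_A(U)) \subseteq \algbox\psi_X\invim(\phi_A(U)) = \algbox U = \psi_X\invim(\phi_A(\algbox U))$. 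Because $\psi_X\invim$ restricted to $\Ofrak_A$ is an order isomorphism (inverse to $\phi_A$), this yields $\algbox\phi_A(U)\subseteq\phi_A(\algbox U)$. Together with the lax direction this gives equality, and the diamond case is identical.

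For the categories with strict morphisms, this strictness is exactly what membership in $\Ccal$-isomorphisms requires. In the semicontinuous cases it is also already supplied by Lemmas~\ref{result:uscTripleImplyBoxStrict} and \ref{result:lscImpliesDiamoindStrict}. The delicate point is ensuring that $\algbox\phi_A(U)$ is indeed in the image of $\phi_A$, so that the order-isomorphism argument applies. This holds because $\algbox$ on $\Omega(\Fcal(A))$ returns an open set, and every open set has the form $\phi_A(c)$.
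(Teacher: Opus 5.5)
Your proof is correct and follows the paper's route. The triangle identity $\Omega(\psi_X)\circ\phi_A = 1_A$ gives injectivity, surjectivity follows because $\Ofrak_A$ is by definition the image of $\phi_A$, and your explicit derivation of strictness simply unpacks what the paper leaves implicit: the two-sided inverse $\Omega(\psi_X)$ is itself a morphism of $\Ccal$ by functoriality of $\Omega$.
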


This result follows from $\Omega({\psi_X} ) \circ \phi _{A} = 1_{A}$, the triangle identity from the adjunction, and the surjectivity of $\phi _A$. It is exactly the same as in the non-modal case, except that it calls upon the relevant  adjunction (Theorem~\ref{result:adjunction}) for the pair $\Ccal$, $\Dcal$.


\begin{lemma}
\label{result:sober}
If $A$ is a frame in the category $\Ccal$, then $\mathcal{F} (A)$ is $\Dcal$-sober.
\end{lemma}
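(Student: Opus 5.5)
The plan is to show that for $Y = \Fcal(A) = (\Pbb_A, \Ofrak_A, R_A)$ the unit $\psi_Y : Y \longrightarrow \Fcal(\Omega Y)$ is a bijection that is continuous and open, and that it preserves and reflects the relation. Using the triangle identity $\Fcal(\phi_A)\circ \psi_{\Fcal A} = 1_{\Fcal A}$ from Theorem~\ref{result:adjunction}, it suffices to show that $\Fcal(\phi_A)$ is injective, or equivalently that $\psi_Y$ is surjective. Then $\psi_Y$ and $\Fcal(\phi_A)$ are mutually inverse morphisms of $\Dcal$, and $\psi_Y$ is an isomorphism. This is the modal analogue of the classical argument that $\pt(A)$ is sober, and I would arrange it the same way.

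\textbf{Step 1.} Compute $\Fcal(\phi_A)$ explicitly using (\ref{pointFunctorOnMorphs}). A point of $\Fcal(\Omega Y)$ has the form $(Q, B, [\mathcal{G}])$, where $Q$ is a frame character on $\Ofrak_A$, $B \in \Ofrak_A$, and $\mathcal{G}$ is a filter of opens. It is sent to
\[
(Q\circ\phi_A,\ \textstyle\bigvee\{c \mid \phi_A(c)\subseteq B\},\ [\{c \mid \phi_A(c)\in\mathcal{G}\}]).
\]
Since $\phi_A : A \to \Ofrak_A$ is surjective, $Q$ is determined by $Q\circ\phi_A$. Because $\phi_A$ is a surjective frame map, $B = \phi_A(b_0)$ where $b_0 = \bigvee\{c \mid \phi_A(c)\subseteq B\}$, so $B$ is determined by the second component. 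Likewise $\mathcal{G} = \phi_A\dirim\{c \mid \phi_A(c)\in\mathcal{G}\}$. Hence $\Fcal(\phi_A)$ is injective.

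\textbf{Step 2.} Deduce from the triangle identity that $\Fcal(\phi_A)$ is a bijection with inverse $\psi_Y$. Every morphism involved is a morphism of $\Dcal$, so both maps are continuous and preserve $R$ in the relevant sense. To conclude that they are mutually inverse isomorphisms of $\Dcal$, I still need that the inverse of a bijective continuous p(q)-morphism is again one. For the topology this is immediate, because the opens of $\Fcal(\Omega Y)$ are exactly the sets $\phi_{\Omega Y}(U)$, and these pull back under $\psi_Y$ to $U$ by the definition of $p_x$ in (\ref{psiXp}).

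\textbf{Step 3.} Check that the relation is reflected. Suppose $\psi_Y(x) R_{\Omega Y} \psi_Y(y)$; this is the main obstacle. The condition unpacks as $p_y(a_x) = 0$ together with $F_x \subseteq \charop(p_y)$ (or (\ref{condition:relationPairsBox}) for pairs), where $a_x = \interiorop{(\Pbb_A\setminus R_A\dirim(x))}$ and $F_x$ is the neighbourhood filter of $R_A\dirim(x)$ from (\ref{psiXa}) and (\ref{psiXF}). For $x = (p,a,[F])$ and $y = (q,b,[G])$, Lemma~\ref{tripmot} translates these into $q(a) = 0$ and $F \subseteq \charop(q)$, which is exactly $x R_A y$.

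\textbf{Step 4.} Transport the p-morphism and q-morphism conditions for the inverse map across the bijection $\psi_Y$, since these conditions are phrased purely in terms of opens and the relation. The delicate point is Step 3. There I would first verify that $\psi_Y$ agrees with the identification $(p,a,[F]) \mapsto (\tilde p, \phi_A(a), [\phi_A\dirim F])$, and then apply both parts of Lemma~\ref{tripmot}.
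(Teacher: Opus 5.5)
Your proposal is correct, and its core takes a different and shorter route than the paper's proof.

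The paper also starts from the triangle identity $\Fcal(\phi_A)\circ\psi_{\Fcal(A)}=1_{\Fcal(A)}$, but then verifies the other composite by direct computation. It takes $(p,U,F)\in\Fcal\Omega\Fcal(A)$ and forms $x=\Fcal(\phi_A)(p,U,F)=(p\circ\phi_A,e,K)$ via (\ref{pointFunctorOnMorphs}). It then checks componentwise that $\psi_{\Fcal(A)}(x)=(p,U,F)$, using surjectivity of $\phi_A$ for the character and Lemma~\ref{tripmot} to get $a_x=U$ and $F_x=F$.

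You instead prove that $\Fcal(\phi_A)$ is injective. This needs only surjectivity of $\phi_A$ and the formula (\ref{pointFunctorOnMorphs}), with no appeal to Lemma~\ref{tripmot}. Cancelling $\Fcal(\phi_A)$ from $\Fcal(\phi_A)\circ\psi_Y\circ\Fcal(\phi_A)=\Fcal(\phi_A)$ then gives $\psi_Y\circ\Fcal(\phi_A)=1$. This is the ``split epimorphism that is monic'' argument, and it isolates exactly what is needed. The paper's computation buys something different: an explicit description of $\psi_{\Fcal(A)}$ on components, namely second component $\phi_A(a)$ and third component $\phi_A\dirim F$.

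Your Steps~3 and~4, and the worry in Step~2, are superfluous. You do not need the inverse of a bijective p(q)-morphism to be a morphism. The inverse of $\psi_Y$ has been exhibited as $\Fcal(\phi_A)$, which is already a $\Dcal$-morphism by functoriality, since $\phi_A$ being a $\Ccal$-morphism is part of Theorem~\ref{result:adjunction}. So $\psi_Y$ is an isomorphism of $\Dcal$ as soon as both composites are identities, exactly as you said in your opening paragraph.

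Steps~3--4 essentially redo the paper's componentwise computation. They are sound for triples. In the pairs case, however, Lemma~\ref{tripmot} does not by itself recover condition (\ref{condition:relationPairsBox}); you would need the lax inclusion $\phi_A(\algbox c)\subseteq\algbox\phi_A(c)$. It is simplest to drop those steps.
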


\begin{proof}
We give the proof in the case where points are triples. For the proof where points are pairs, elide the filter component of the triples.

We have 
$\mathcal{F}(\phi _A) \psi _{ \mathcal{F}(A)} = 1 _{\mathcal{F}(A)}$ 
by the triangle identities; we will show 
$\psi _{ \mathcal{F}(A) } \mathcal{F} (\phi _A) = 1 _{\mathcal{F}\Omega\mathcal{F}(A)}$.
Let $(p, U , F ) \in \mathcal{F} 
\Omega \mathcal{F} (A)$.

Let
\begin{equation*}
x = \mathcal{F}(\phi _A)(p, U, F ) 
 =  (p \circ \phi _A, e , K ) 
\end{equation*}
where 
\begin{align*}
    e &= \bigvee \{ c \in A | \phi _A (c) \subseteq U\} \\
    K &= \{ c \in A | \phi _A (c) \in F \} 
\end{align*}
as in (\ref{pointFunctorOnMorphs}).
Let $\psi _{\mathcal{F} A} (x) = (p_x , a_x , F_x)$ as in (\ref{psiXp}), (\ref{psiXa}), (\ref{psiXF}), where now $X = \Fcal(A)$.
We show $(p,U,F) = (p_x , a_x , F_x)$. 

For any $c \in A$, we have $p_x ( \phi _A (c))=1$ iff $x \in \phi _A(c)$ iff 
$p ( \phi _A (c))=1$, so $p_x = p$ as $\phi_A$ is surjective.

For any $c \in A$, it is the case that $\phi _A (c) \subseteq U$ iff 
$c \leq e$, by definition of $e$ and because $\phi_A$ is a frame morphism. 
However, 
$\phi _A(c) \subseteq a_x$ 
iff (by definition of $a_x$)
$\phi _A (c) \subseteq  X \setminus R_A\dirim (x)$ 
iff $c \leq e$, by definition of $x$ and Lemma~\ref{tripmot}.
Therefore $U = a_x$. 

For all $c \in A$ we have 
$\phi _A (c) \in F_x$ iff   
$R_A \dirim (x)  \subseteq \phi_A (c)$
iff 
$c \in K$ 
iff 
 $\phi _A (c) \in F$,                             
using Lemma~\ref{tripmot}. Therefore $F_x  = F$.
\end{proof}

\section{Existence of Points}
\label{sec:existence}

\subsection{Background}

Let $S$ be a subset of a frame $A$. The set $S$ said to be (upwards) \emph{directed} whenever any two elements of $S$ have an upper bound in $S$. A \emph{directed join} is a join of a directed set.

An element $a\in A$ is said to be \emph{compact} (or \emph{finite}) if, whenever $S$ is directed and $a \leq \bigvee S$, there is some $s \in S$ with $a \leq s$. The set of compact elements of $A$ is written $\compacts(A)$.

A frame $A$ is \emph{spectral} (or \emph{coherent}) if finite meets (including the empty meet, top element) of compacts, are compact, and every element of $A$ is the join of finite elements. In this case, $\compacts(A)$ is a distributive lattice.
A frame morphism $f: A \longrightarrow B$ is \emph{spectral} if it maps compact elements of $A$ to compact elements of $B$.

Let $\ideals(A)$ be the poset of ideals of a join-semilattice $A$, ordered by inclusion of ideals. The following results are standard. If $A$ is a distributive lattice, then $\ideals (A)$ is a frame. 
A frame $A$ is spectral if and only if there is a distributive lattice $B$ such that $A$ is isomorphic to the frame of ideals of $B$.  
If $A$ is a frame, then $B = \compacts (A)$, and the isomorphism is 
\begin{equation}
\label{def:mapToCompacts}
a \mapsto \compacts(A) \cap \downset a . 
\end{equation}If $B$ is a distributive lattice, then the compact elements of $\ideals(B)$ are the principal ideals. 
The category of distributive lattices is equivalent to the category of spectral frames. 
For a lattice $B$, the prime elements of $\ideals(B)$ are the prime ideals of $B$. 
Any spectral frame is spatial --- it has enough points. 
%

A subset $F$ of a partial order $A$ is said to be \emph{Scott-open}, if, whenever $S$ is a directed set and $\bigvee S \in F$, there is some $s \in S\cap F$.
Any Scott-open filter of a spectral frame $A$ corresponds to a filter of $K(A)$. 
If $F$ is a Scott-open filter of $A$ and $a \in \compacts(A)$, then 
$F \wedge a = \{ b\wedge a \mid b \in F \}$ is Scott-open.

If $F$ is a Scott-open filter, and $a\notin F$ is an element of $A$, then, using (\ref{def:mapToCompacts}) and the Prime Ideal Theorem, there is maximal and prime ideal $I$ containing $\compacts(A) \cap \downset a$ but disjoint from the filter $\{ \compacts (A) \cap \downset b \mid b \in F \}$.
There is therefore a completely prime filter of $A$ containing $F$ but not $a$.

\subsection{Modal Distributive Lattices and Spectral Frames}

\begin{definition}
    A \emph{modal distributive lattice} $(A,\algbox, \algdiamond)$ is a distributive lattice $A$ with a pair of monotone operators $\algbox, \algdiamond A  \longrightarrow A$ satisfying the axioms (\ref{m1}), (\ref{m2}), (\ref{m3}), (\ref{m4}).  
\end{definition}
We say $(A,\algbox, \algdiamond)$ is \emph{lower} if it satisfies axiom (\ref{axiom:diamondpreservesbinaryjoin}). It is \emph{convex} if it satisfies both (\ref{axiom:diamondpreservesbinaryjoin}) and (\ref{axiom:lscboxveemixed}).
It is \emph{serial} if it satisfies (\ref{ax:serial}).

\begin{definition}
A \emph{modal distributive lattice morphism} 
$f:(A, \algbox, \algdiamond) \longrightarrow (B, \algbox, \algdiamond)$ is a distributive lattice morphism $f:A \longrightarrow B$ such that (\ref{boxmorphcond}) and (\ref{dimorphcond}) both hold. It is \emph{box-strict} when (\ref{boxmorphcond}) is required to be an equality, \emph{diamond-strict} when (\ref{dimorphcond}) is required to be an equality and \emph{strict} when it is both box-strict and diamond-strict. 
Let $\mdlat$ be the category of modal distributive lattices and modal distributive lattice morphisms.
\end{definition}
The special case of a modal distributive lattice in which the lattice is a Heyting algebra is of interest in logical applications.

 Propositions~\ref{result:idealCompAdjunct}, \ref{result:distrib2spec} and \ref{result:spec2distrib} are already known \cite{hil00} and show that the ideal completion technique relating distributive lattices to frames extends to the modal setting.
\begin{proposition}
\label{result:idealCompAdjunct}
    The forgetful functor $\mfrm \longrightarrow \mdlat$ has a left adjoint
    $\Irs{}: \mdlat \longrightarrow \mfrm$ with 
    $\Irs(A) = (\ideals (A) , \algbox , \algdiamond)$, where
    \begin{align*}
        \algbox I &= \downset \{ \algbox a \mid a \in I \}\\
        \algdiamond I &= \downset \{ \algdiamond a \mid a \in I \}
    \end{align*}
    for $I \in \ideals (A)$. The unit of the adjunction is $a \mapsto \downset a$ and is strict.
\end{proposition}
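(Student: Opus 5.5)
The plan has four steps: check that $\Irs(A)$ is a modal frame, check that the unit is a strict modal lattice morphism, build the extension of a given morphism, and show that extension is unique.

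\emph{Step 1: $\Irs(A)$ is a modal frame.} It is standard that $\ideals(A)$ is a frame when $A$ is a distributive lattice. In it, binary meet is intersection, and joins are generated ideals: the join of a family consists of elements below finite joins of members of the union. First I check that $\algbox I$ and $\algdiamond I$ are ideals. Each is a down-set by construction. For closure under binary joins:
\begin{itemize}
\item for $\algdiamond I$ I would like to use $\algdiamond a \vee \algdiamond b \leq \algdiamond(a \vee b)$, which follows from monotonicity, with $a \vee b \in I$;
\item for $\algbox I$ the same holds, since $\algbox a \vee \algbox b \leq \algbox(a\vee b)$ by monotonicity.
\end{itemize}
Both sets are nonempty because $\bot \in I$. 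Monotonicity of the two operators on ideals is immediate. The axioms then transfer elementwise:
\begin{itemize}
\item (\ref{m1}): $\top \leq \algbox\top$ in $A$ gives $\top \in \algbox(\downset\top)$.
\item (\ref{m4}): $\algdiamond\{\bot\}=\downset\algdiamond\bot=\{\bot\}$.
\item (\ref{m2}): an element of $\algbox I \cap \algbox J$ lies below some $\algbox a$ and some $\algbox b$ with $a\in I$, $b\in J$. Hence it lies below $\algbox a \wedge \algbox b \leq \algbox(a \wedge b)$, and $a\wedge b \in I\cap J$.
\item (\ref{m3}): the same argument with $\algbox a\wedge\algdiamond b\leq \algdiamond(a\wedge b)$.
\end{itemize}

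\emph{Step 2: the unit is strict.} The map $a\mapsto\downset a$ is a lattice morphism into $\ideals(A)$. Strictness amounts to $\algbox(\downset a)=\downset\algbox a$ and $\algdiamond(\downset a)=\downset\algdiamond a$. Both follow from monotonicity, since the largest generator of each set is $\algbox a$, respectively $\algdiamond a$.

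\emph{Step 3: construct the extension.} Given a modal frame $B$ and a modal lattice morphism $f:A\to B$, define $\bar f(I)=\bigvee\{f(a)\mid a\in I\}$. This is the usual unique frame extension of $f$ along the ideal completion. Each ideal is the directed union of the principal ideals it contains, so frame maps out of $\ideals(A)$ are determined by their values on principal ideals; this gives uniqueness of $\bar f$ as a frame map with $\bar f(\downset a)=f(a)$.

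\emph{Step 4: $\bar f$ is a modal morphism.} It remains to check $\bar f\circ\algbox\leq\algbox\circ\bar f$ and the corresponding inequality for $\algdiamond$. We have
\[
\bar f(\algbox I)=\bigvee\{f(\algbox a)\mid a\in I\}\leq\bigvee\{\algbox f(a)\mid a\in I\}\leq \algbox\bar f(I),
\]
where the last step uses monotonicity of $\algbox$ on $B$. The diamond case is identical.

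The main obstacle I expect is in Step 4. We only get lax preservation, because $\algbox$ in $B$ need not preserve directed joins; that is harmless here since $\mfrm$ morphisms are lax. A second point needing care is that the forgetful functor $\mfrm\to\mdlat$ is well defined: a frame is a distributive lattice, and frame morphisms satisfying (\ref{boxmorphcond}) and (\ref{dimorphcond}) are $\mdlat$ morphisms. Naturality then follows from uniqueness, which completes the adjunction.
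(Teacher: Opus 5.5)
Your proposal is correct. The four steps together establish the universal property of $a \mapsto \downset a$:
\begin{itemize}
\item $\algbox I$ and $\algdiamond I$ are ideals satisfying (\ref{m1})--(\ref{m4});
\item the unit is strict;
\item $\bar f(I)=\bigvee\{f(a)\mid a\in I\}$ is the unique frame map extending $f$, since it is determined on principal ideals;
\item $\bar f$ laxly preserves both operators, by monotonicity in $B$.
\end{itemize}
The paper gives no proof of its own and defers to Hilken \cite{hil00}; your argument is the standard ideal-completion verification that this citation stands for.
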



\begin{definition}
A modal operator, $\algbox$ or $\algdiamond$, is said to be \emph{continuous} if it preserves directed joins. A modal operator is \emph{compact} if it takes compact elements to compact elements.
\end{definition}

\begin{definition}
A modal frame $(A, \algbox, \algdiamond)$ is \emph{modally  spectral} if it is spectral as a frame and the operators $\algbox$ and $\algdiamond$ are both compact and continuous.
\end{definition}

\begin{proposition}
\label{result:distrib2spec}
    If $B$ is a distributive lattice, then $\Irs(B)$ is modally spectral.
\end{proposition}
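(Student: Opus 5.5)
Throughout, $B$ is read as a modal distributive lattice, so that $\Irs(B)$ is defined by Proposition~\ref{result:idealCompAdjunct}. We must show three things: $\ideals(B)$ is a spectral frame, and $\algbox$ and $\algdiamond$ on $\ideals(B)$ are both compact and continuous.

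Spectrality of $\ideals(B)$ is the standard fact recalled in the background. Its compact elements are exactly the principal ideals $\downset a$ for $a \in B$. Finite meets of principal ideals are principal: $\downset a \cap \downset b = \downset(a\wedge b)$, and the empty meet is $\downset \top$. Every ideal is the directed union of the principal ideals it contains.

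For compactness of the operators, take a compact element $\downset a$. By definition,
\begin{equation*}
  \algbox(\downset a) = \downset\{\algbox b \mid b \leq a\}.
\end{equation*}
By monotonicity of $\algbox$ on $B$, every $\algbox b$ with $b \leq a$ lies below $\algbox a$. Hence this set equals $\downset \algbox a$, which is principal and so compact. The same argument gives $\algdiamond(\downset a) = \downset \algdiamond a$.

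For continuity, let $\{I_s\}$ be a directed family of ideals. Its join in $\ideals(B)$ is the union $I = \bigcup_s I_s$, since a directed union of ideals is again an ideal. Then
\begin{equation*}
  \algbox I = \downset\{\algbox a \mid a \in \textstyle\bigcup_s I_s\} = \bigcup_s \downset\{\algbox a \mid a \in I_s\} = \bigcup_s \algbox I_s.
\end{equation*}
The family $\{\algbox I_s\}$ is directed, because $\algbox$ is monotone on ideals. So its union is an ideal and coincides with the join $\bigvee_s \algbox I_s$. The same computation applies to $\algdiamond$.

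A preliminary point, in effect part of Proposition~\ref{result:idealCompAdjunct}, is that $\algbox I$ is actually an ideal. Downward closure is automatic. Closure under binary joins needs $\algbox a \vee \algbox b \leq \algbox(a\vee b)$, which follows from monotonicity. Nonemptiness holds since $\bot \in I$.

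There is no substantial obstacle. The only step needing care is confirming that directed joins in $\ideals(B)$ are unions, since arbitrary joins are not; this is what makes the continuity computation valid.
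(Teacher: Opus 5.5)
Your argument is correct. The paper gives no proof of this proposition. It lists it, with Propositions~\ref{result:idealCompAdjunct} and \ref{result:spec2distrib}, as already known from Hilken \cite{hil00}. Your proposal therefore supplies a self-contained verification where the paper relies on a citation.

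The two observations carrying your proof are the standard ones, and your computations are accurate:
\begin{itemize}
\item For compactness, $\algbox(\downset a)=\downset \algbox a$ and $\algdiamond(\downset a)=\downset\algdiamond a$.
\item For continuity, directed joins in $\ideals(B)$ are unions, so $I\mapsto \downset\{\algbox a \mid a\in I\}$ commutes with them, and likewise for $\algdiamond$.
\end{itemize}

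Your direct route also makes visible something the citation hides. Compactness and continuity of the extended operators use nothing beyond monotonicity of $\algbox$ and $\algdiamond$ on $B$. The axioms (\ref{m1})--(\ref{m4}) are needed only to know that $\Irs(B)$ is a modal frame at all, which you rightly take from Proposition~\ref{result:idealCompAdjunct}.

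Your reading of ``distributive lattice'' in the statement as ``modal distributive lattice'' is the intended one.
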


\begin{proposition}
\label{result:spec2distrib}
    If $A$ is modally spectral, then $a \mapsto  \compacts(A) \cap \downset a$  defines an isomorphism of modal frames with $\Irs(\compacts(A))$.
\end{proposition}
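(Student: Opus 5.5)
The plan is to combine the non-modal fact that $\theta: a \mapsto \compacts(A) \cap \downset a$ is a frame isomorphism $A \cong \ideals(\compacts(A))$ with a check that $\theta$ commutes with both modal operators. First I will verify that $\compacts(A)$, with the restricted operators, is a modal distributive lattice, so that $\Irs(\compacts(A))$ makes sense by Proposition~\ref{result:idealCompAdjunct}. Since $A$ is spectral, $\compacts(A)$ is a sublattice containing $\top$ and $\bot$. Since $\algbox$ and $\algdiamond$ are compact operators, they restrict to monotone maps on $\compacts(A)$. The axioms (\ref{m1})--(\ref{m4}) are inequalities that hold in $A$, so they hold in the sublattice. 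This also covers the inequality $\top \leq \algbox\top$, since $\top$ is compact.

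An isomorphism of frames is an isomorphism in $\mfrm$ exactly when it commutes with the operators, because the inverse then also commutes. So it remains to show, for each $a \in A$, that
\begin{equation*}
\compacts(A) \cap \downset(\algbox a) = \downset\{ \algbox k \mid k \in \compacts(A),\ k \leq a\},
\end{equation*}
where the right-hand downset is taken in $\compacts(A)$. The analogous equality for $\algdiamond$ is proved identically.

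The inclusion $\supseteq$ is immediate. If $k \in \compacts(A)$ and $k \leq a$, then $\algbox k$ is compact by compactness of $\algbox$, and $\algbox k \leq \algbox a$ by monotonicity. So every compact element below some such $\algbox k$ is a compact element below $\algbox a$.

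For $\subseteq$, I will use continuity of $\algbox$. Write $a = \bigvee S$ with $S = \compacts(A) \cap \downset a$. This set $S$ is directed, since compacts are closed under finite joins; that closure holds because $\compacts(A)$ is a lattice, and the empty join $\bot$ is also compact. By continuity, $\algbox a = \bigvee_{k \in S} \algbox k$, and $\{\algbox k \mid k \in S\}$ is again directed by monotonicity. Now take a compact $j \leq \algbox a$. By compactness of $j$ there is some $k \in S$ with $j \leq \algbox k$, which places $j$ in the right-hand side.

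The only delicate point is this use of directedness. Compactness in this paper is defined relative to directed joins, so I must check explicitly that $S$ and its image under $\algbox$ (respectively $\algdiamond$) are directed, and that the empty case is handled. The latter holds because $\bot \in S$, so $S$ is never empty. Everything else is routine.
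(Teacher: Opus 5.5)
Your proof is correct. The paper does not prove this proposition itself: it attributes it, together with the neighbouring ideal-completion results, to Hilken \cite{hil00}. So the comparison is with a citation rather than an argument. Your route is the natural direct verification: start from the standard frame isomorphism $A \cong \ideals(\compacts(A))$, note that a frame isomorphism commuting with the operators is automatically an isomorphism in $\mfrm$, and then check commutation. For the inclusion $\supseteq$ you use compactness of the operators, and for $\subseteq$ you use continuity of the operators together with compactness of the element $j$.

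One point deserves an explicit sentence in a written proof. You rely on $\bot \in \compacts(A)$, both to get $S \neq \emptyset$ and to make $\compacts(A)$ a bounded sublattice. This needs the usual convention that directed sets are nonempty. The paper's definition of directedness does not state this, and taken literally it would make the empty set directed and $\bot$ non-compact. Your closure of $\compacts(A)$ under binary joins is likewise a general fact about frames, and is better justified directly than by appeal to $\compacts(A)$ being a lattice.
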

Note that an isomorphism of modal frames is necessarily strict, so this isomorphism lives in $\mfrms$.

The passage between distributive lattices and frames preserves many properties that are useful in correspondence theory.
\begin{lemma}
Let $A$ be modally spectral. Then for each of the inequalities 
(\ref{axiom:diamondpreservesbinaryjoin}),
(\ref{axiom:lscboxveemixed}), 
(\ref{ax:serial}), 
(\ref{reflexivitybox}),
(\ref{reflexivitydiamond}),
(\ref{transitivitybox}),
(\ref{transitivitydiamond}),
(\ref{symmetrydiamondbox}),
(\ref{symmetryboxdiamond}), 
$A$ satisfies the inequality for all elements if and only if $K(A)$ does.
\end{lemma}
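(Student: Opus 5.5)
The plan is to reduce every inequality to a statement about compact elements, using two facts. First, in a spectral frame every $a$ is the directed join of $\compacts(A)\cap\downset a$. Second, because $\algbox$ and $\algdiamond$ are continuous, each operator, and each composite $\algbox\algbox$, $\algdiamond\algdiamond$, $\algbox\algdiamond$, $\algdiamond\algbox$, preserves directed joins. Because the operators are compact, they restrict to operations on $\compacts(A)$, so $\compacts(A)$ is a modal distributive lattice and the inequalities make sense there. The direction ``$A$ satisfies it $\Rightarrow$ $\compacts(A)$ satisfies it'' is then immediate. Each inequality is a universal statement built from $\wedge$, $\vee$, $\algbox$, $\algdiamond$. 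Compact elements are closed under these operations: binary joins of compacts are compact, binary meets of compacts are compact by spectrality, and compactness of the operators handles the rest. The operations on $\compacts(A)$ are the restrictions of those on $A$, so an inequality valid for all elements of $A$ holds in particular for compact ones.

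For the converse I would handle the inequalities term by term. For a unary inequality $s(a)\le t(a)$, with $s,t$ among $\mathrm{id},\algbox,\algdiamond$ and the composites, write $a=\bigvee D$ where $D=\compacts(A)\cap\downset a$ is directed. Then $s(a)=\bigvee_{d\in D}s(d)$, since $s$ preserves directed joins (and $\algbox\algbox$ etc.\ do too, because monotone maps send directed sets to directed sets). Each $s(d)\le t(d)\le t(a)$ by the compact case and monotonicity of $t$, which gives $s(a)\le t(a)$. This covers (\ref{ax:serial}), (\ref{reflexivitybox}), (\ref{reflexivitydiamond}), (\ref{transitivitybox}), (\ref{transitivitydiamond}), (\ref{symmetrydiamondbox}) and (\ref{symmetryboxdiamond}). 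Only the left-hand side needs continuity; the right-hand side needs only monotonicity.

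For the binary inequalities (\ref{axiom:diamondpreservesbinaryjoin}) and (\ref{axiom:lscboxveemixed}), write $a=\bigvee D$ and $b=\bigvee E$ with $D,E$ the directed sets of compacts below them. Then $a\vee b=\bigvee\{d\vee e\mid d\in D,e\in E\}$, and this family is directed. By continuity, $\algdiamond(a\vee b)=\bigvee_{d,e}\algdiamond(d\vee e)$. Each term satisfies $\algdiamond(d\vee e)\le\algdiamond d\vee\algdiamond e\le\algdiamond a\vee\algdiamond b$ by the compact case and monotonicity. Similarly, $\algbox(a\vee b)=\bigvee_{d,e}\algbox(d\vee e)\le\bigvee_{d,e}(\algbox d\vee\algdiamond e)\le\algbox a\vee\algdiamond b$.

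The only point needing care is the iterated operators. I need to check that a composite of directed-join-preserving monotone maps preserves directed joins, which it does because the image of a directed set under a monotone map is directed. I also need that $d\vee e$ is compact, which holds because a finite join of compacts is compact. An alternative route goes through Proposition~\ref{result:spec2distrib}: one would verify that $\Irs$ transfers each inequality from $\compacts(A)$ to ideals, but the direct argument above avoids computing with ideals.
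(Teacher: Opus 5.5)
The paper states this lemma without proof. The remark before it and Propositions~\ref{result:distrib2spec} and~\ref{result:spec2distrib} suggest the intended route: transport along the strict isomorphism $A\cong\Irs(\compacts(A))$, and check that each inequality passes between a modal distributive lattice $B$ and its ideal completion $\Irs(B)$.

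Your direct argument is correct and complete, and it takes a different route.
\begin{itemize}
\item The set $D=\compacts(A)\cap\downset a$ is a non-empty directed set with join $a$, since it contains $\bot$ and is closed under binary joins. Non-emptiness matters: continuity only concerns non-empty directed joins, and $\algbox\bot\neq\bot$ in general.
\item Composites of monotone directed-join-preserving maps preserve directed joins.
\item Indexing over pairs $(d,e)\in D\times E$ gives a directed family with join $a\vee b$. So for (\ref{axiom:diamondpreservesbinaryjoin}) and (\ref{axiom:lscboxveemixed}) you never have to split a compact element lying below $a\vee b$.
\end{itemize}

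The ideal-completion route is the same computation in different clothing. The operators on $\Irs(B)$ are by definition extended from principal ideals. Checking, say, (\ref{axiom:lscboxveemixed}) on ideals amounts to noting that every element of $I\vee J$ lies below some $i\vee j$ with $i\in I$, $j\in J$, and applying the lattice inequality there.

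Your version does not depend on Proposition~\ref{result:spec2distrib}, which itself rests on compactness and continuity of the operators. It also isolates exactly where continuity enters: only on the left-hand sides. The ideal version is phrased directly for an arbitrary modal distributive lattice, which is the form needed for correspondence applications. Your statement gives that too, since $b\mapsto\downset b$ strictly identifies $B$ with $\compacts(\Irs(B))$ and $\Irs(B)$ is modally spectral.
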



Below, the parameter $\Ccal$ is used to run over categories of modal frames, and $\Dcal$ over ccategories of relational spaces, as in Table~\ref{table:pre-points}.
We will consider cases of the functor 
$\Fcal: \Ccal \longrightarrow \Dcal$ 
that constructs a relational space from a frame with operators, as described in Section~\ref{sec:adjunction}. 

\begin{lemma}
\label{result:relatedCharacterExists}
    Let $A$ be a modal frame. 
    If $A$ is modally spectral, $p(\algdiamond a) = 0$ and $p(\algdiamond b) = 1$, then there is a a frame character $q$ satisfying $q(a) = 0$, $q(b)=1$ and $p \circ \algbox \leq q$.
\end{lemma}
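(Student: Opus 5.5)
We need a completely prime filter $\charop(q)$ that contains $F_p$ and $b$ but not $a$. The natural plan is to apply the prime filter separation result for spectral frames quoted in the background: if $F$ is a Scott-open filter and $a \notin F$, there is a completely prime filter containing $F$ but not $a$.

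\emph{Step 1: the candidate filter.} Take $G = \{ c \in A \mid \exists d \in F_p.\ d \wedge b \leq c\}$, the filter generated by $F_p \cup \{b\}$. By Lemma~\ref{result:FpFilter}, $F_p$ is a filter, so $G$ is a filter.

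\emph{Step 2: $G$ is Scott-open.} First, $F_p$ is Scott-open. If $S$ is directed and $\bigvee S \in F_p$, then $p(\algbox \bigvee S)=1$. Continuity of $\algbox$ gives $\algbox\bigvee S = \bigvee_{s\in S}\algbox s$, and since $p$ preserves joins, $p(\algbox s)=1$ for some $s \in S$. For $G$ we need the compactness machinery. Since $1 = p(\algdiamond b)$ and $b$ is the directed join of compacts below it, continuity of $\algdiamond$ and complete primeness of $p$ give a compact $k \leq b$ with $p(\algdiamond k)=1$. Replace $b$ by $k$ and let $G$ be generated by $F_p \cup\{k\}$. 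The background fact gives that $F_p \wedge k$ is Scott-open, so its upward closure $G$ is Scott-open. Achieving $q(k)=1$ then yields $q(b)=1$ by monotonicity.

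\emph{Step 3: $a \notin G$.} This is the main obstacle. Suppose $d \wedge k \leq a$ with $d \in F_p$, so $p(\algbox d)=1$. By axiom (\ref{m3}),
\[
\algbox d \wedge \algdiamond k \leq \algdiamond (d \wedge k) \leq \algdiamond a .
\]
Applying $p$ gives $1 = p(\algbox d)\wedge p(\algdiamond k) \leq p(\algdiamond a) = 0$, a contradiction. So $a \notin G$.

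\emph{Step 4: conclusion.} The separation result gives a completely prime filter containing $G$ but not $a$. Its character $q$ satisfies $q(a)=0$ and $q(k)=1$, hence $q(b)=1$. Since $F_p \subseteq G \subseteq \charop(q)$, we get $p\circ\algbox \leq q$, as required.

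The delicate points are the Scott-openness of $G$, which needs the reduction to a compact $k$ via continuity of $\algdiamond$, and the use of (\ref{m3}) in Step 3. Compactness of the operators does not appear to be needed; continuity suffices.
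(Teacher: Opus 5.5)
Your proof is correct and takes the same route as the paper's. Both arguments get Scott-openness of $F_p$ from continuity of $\algbox$, choose a compact $k \leq b$ with $p(\algdiamond k)=1$ using continuity of $\algdiamond$, use axiom (\ref{m3}) to keep $a$ out of $\upset(F_p \wedge k)$, and finish with prime-filter separation for Scott-open filters in spectral frames; as you note, only continuity of the operators is needed, which is also true of the paper's proof.
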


\begin{proof}
We know that the set $F_p$ is a filter by Lemma~\ref{result:FpFilter}. It is Scott-open, because $\algbox$ is continuous. 
Because $p(\algdiamond b)=1$ and $c$ is expressible as the join of compact elements below it, there is some compact element $c \leq b$ such that $p(\algdiamond c)=1$
The set $F_p \wedge c$ is Scott-open. Axiom (\ref{m3}) guarantees that $a \notin F_P \wedge c$. There is thus a completely prime filter $F$ containing $\upset(F_p \wedge c)$ but not $a$. The filter $F$ determines the required frame character $q$ via $F = \charop(q)$.
\end{proof}

\begin{lemma}
\label{result:pairprepoint2Point}
If $A$ is modally spectral, then for every frame character $p$ and element $a \in A$ such that $p(\algdiamond a)=0$, there exists $b\geq a$ such that $(p,b)$ is a modal frame point for $\Dcal = \relsp$.
Additionally, there is an $F$ such that $(p,b,F)$ is a point for $\Dcal = \relspq$.
\end{lemma}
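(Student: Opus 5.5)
The plan is to show that a suitable set of pre-points satisfies the modal point conditions. Since $\Pbb_A$ is the largest set of pre-points satisfying those conditions, any such set lies inside $\Pbb_A$.

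For $\Dcal=\relsp$, let $Q$ be the set of all pre-points $(q,c)$ with $q(\algdiamond c)=0$ and $c=a_q^{\mathrm{fin}}$, where
\[
a_q^{\mathrm{fin}} = \bigvee\{ d \in \compacts(A) \mid q(\algdiamond d)=0\}.
\]
The first task is to check that these are pre-points. By continuity of $\algdiamond$ the set $\{d\in\compacts(A)\mid q(\algdiamond d)=0\}$ is directed. Indeed, if $q(\algdiamond d_1)=q(\algdiamond d_2)=0$ we need $q(\algdiamond(d_1\vee d_2))=0$. That would follow from lowerness, which we do not have, so this choice is risky.

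A safer choice is to take $Q$ to be the set of all pairs $(q,c)$ with $q(\algdiamond c)=0$, i.e. all pre-points. We then verify condition (\ref{condition:pointsDiamond}) for every pre-point:
\begin{enumerate}
\item Suppose $(q,c)$ is a pre-point and $d\nleq c$.
\item By spectrality, choose a compact $d'\leq d$ with $d'\nleq c$.
\item If $q(\algdiamond d')=1$, Lemma~\ref{result:relatedCharacterExists} applied with $a:=c$, $b:=d'$ gives a character $r$ with $r(c)=0$, $r(d')=1$ and $q\circ\algbox\leq r$.
\item Then $(r,\bot)$ is a pre-point, by Lemma~\ref{result:prepointProperties}, it is $R_A$-related to $(q,c)$, and $r(d)=1$.
\end{enumerate}
The obstacle is the case $q(\algdiamond d')=0$. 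There the condition fails for $(q,c)$, so arbitrary pre-points are not points. This is exactly why the statement only asserts some $b\geq a$.

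This suggests the construction: enlarge $a$ maximally. Let $b=\bigvee S$ where
\[
S=\{d\mid q(\algdiamond d)=0\}\cap\downset(\ldots),
\]
or, more concretely, apply Zorn's lemma to the set of $b\geq a$ with $p(\algdiamond b)=0$. Chains have joins with $p(\algdiamond\bigvee)=0$ by continuity of $\algdiamond$, since $p$ preserves directed joins. Zorn then gives a maximal such $b$.

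The points set is then $Q=\{(q,c)\mid q(\algdiamond c)=0,\ c \text{ maximal}\}$. To check (\ref{condition:pointsDiamond}) for $(q,c)\in Q$, take $d\nleq c$ and a compact $d'\leq d$ with $d'\nleq c$. Then $c\vee d'>c$, so by maximality $q(\algdiamond(c\vee d'))=1$. By monotonicity alone this gives nothing about $\algdiamond d'$. So instead we apply Lemma~\ref{result:relatedCharacterExists} directly with $b:=c\vee d'$, which yields $r$ with $r(c)=0$, $r(c\vee d')=1$ and $q\circ\algbox\leq r$. Hence $r(d')=1$, so $r(d)=1$.

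The witness must itself lie in $Q$. Apply Zorn again to $r$ to get a maximal $c_r\geq\bot$, giving $(r,c_r)\in Q$. The relation needs only $r(c)=0$ and $q\circ\algbox\leq r$, which concern $c$ and not $c_r$. So $(q,c)R_A(r,c_r)$. This closes $Q$ under the condition, and $(p,b)\in Q$ with $b\geq a$.

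For $\relspq$, take triples $(q,c,F_q)$ with $c$ maximal as above and $F_q$ the canonical filter; (\ref{condition:triples}) then holds. The relation $R_A$ now requires $F_q\subseteq\charop(r)$, which is exactly $q\circ\algbox\leq r$, so the diamond argument carries over unchanged.

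For (\ref{condition:pointsBox}), suppose $e\notin F_q$, i.e. $q(\algbox e)=0$. We need a character $r$ with $F_q\subseteq\charop(r)$, $r(c)=0$ and $r(e)=0$. Take an $r$ with $\charop(r)\supseteq F_q$ and $c\vee e\notin\charop(r)$, using the Scott-open filter separation from the Background (Scott-openness of $F_q$ comes from continuity of $\algbox$). This requires $c\vee e\notin F_q$, i.e. $q(\algbox(c\vee e))=0$.

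This is the main obstacle. Without convexity it need not hold, so I expect to replace $F_q$ by a larger filter. By Lemma~\ref{result:prepointProperties} (parts 5 and 6), enlarging is permitted when (\ref{condition:triples-usc}) is not required. I would first try the filter generated by $F_q$ together with all $e$ satisfying $q(\algbox(c\vee e))=1$, and if that fails, pick $F$ maximal by Zorn subject to the box witnesses existing.
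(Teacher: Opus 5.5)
Your $\relsp$ half is sound and takes a different route from the paper. The paper fixes a witness $q_{p,a,c}$ for every $c$ with $p(\algdiamond c)=1$ and lets $b$ be the largest element killed by all of them. You instead take $b$ maximal in $\{x \mid p(\algdiamond x)=0\}$ via Zorn, so that $d\nleq c$ forces $q(\algdiamond(c\vee d))=1$ and Lemma~\ref{result:relatedCharacterExists} supplies the witness. (The compact $d'$ is unnecessary.)

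The gap is the $\relspq$ half, which you leave at ``I would first try\ldots''. Your remark that the diamond argument ``carries over unchanged'' holds only for $F=F_q$. For triples, $R_A$ requires $F\subseteq\charop(r)$, so enlarging the filter shrinks the pool of admissible diamond witnesses, and this must be re-checked for whatever filter you choose. This is also why your fallback fails as stated. A filter maximal subject to box witnesses existing is $F=A$. It satisfies (\ref{condition:pointsBox}) vacuously but has no $R_A$-successors at all, since no character contains $\bot$. Hence (\ref{condition:pointsDiamond}) fails as soon as $c\neq\top$.

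Your first candidate is the right one, and the missing verification is short. Let $G_{q,c}=\{e\in A\mid q(\algbox(c\vee e))=1\}$. By (\ref{m1}), (\ref{m2}), monotonicity and distributivity it is a filter. It contains $F_q$, so $(q,c,G_{q,c})$ is a pre-point. The key observation is that $G_{q,c}\subseteq\charop(r)$ for every character $r$ with $F_q\subseteq\charop(r)$ and $r(c)=0$: indeed $r(c\vee e)=1$ and $r(c)=0$ force $r(e)=1$. This gives both point conditions:
\begin{itemize}
\item The diamond witnesses from Lemma~\ref{result:relatedCharacterExists} remain $R_A$-related to $(q,c,G_{q,c})$.
\item If $e\notin G_{q,c}$, then $c\vee e\notin F_q$. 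Since $F_q$ is Scott-open, separation yields $r$ with $F_q\subseteq\charop(r)$ and $r(c\vee e)=0$, and by the same observation this $r$ is a box witness.
\end{itemize}
So the triples $(q,c,G_{q,c})$ with $c$ maximal form a set closed under both point conditions; the witnesses $(r,c_r,G_{r,c_r})$ are taken with $c_r$ maximal above $\bot$, as you do. This set contains $(p,b,G_{p,b})$.

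In fact $G_{q,c}$ equals the intersection of $\charop(r)$ over all such admissible $r$. It is the canonical analogue of the paper's $T_{p,a}=\bigcap\charop(q_{p,a,c})$. The paper gets (\ref{condition:pointsBox}) for free because every element outside $T_{p,a}$ is already refuted by one of the chosen witnesses. Your route replaces that choice of witnesses with Zorn plus the separation theorem, at the cost of this one extra check.
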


\begin{proof}
For each $p$, $a$, $c$ with $p(\algdiamond a) = 0$ and $p(\algdiamond c)=1$, select a $q_{p, a, c}$ such that $q_{p, a, c} (a) = 0$, $q_{p, a, c}(c)=1$ and $p \circ \algbox \leq q_{p, a, c}$, noting that at least one exists by Lemma~\ref{result:relatedCharacterExists}.  

For each $p$, $a$ with $p(\algdiamond a) = 0$, define 
\begin{align*}
    S_{p,a} &= \{ d \in A \mid \forall c \in A. \ p( \algdiamond c)=1 \Rightarrow q_{p,a,c} (d)=0 \}\\ 
    b_{p,a} &= \bigvee S_{p,a} \\
    T_{p,a} &= \bigcap \{ \charop(q_{p,a,c}) \mid c\in A , p(\algdiamond c)=1 \} . 
\end{align*}

The set $S_{p,a}$ is easily verified to be directed, and continuity of $\algdiamond$ then ensures that $p(\algdiamond b_{p,a}) = 0$: if $p(\algdiamond d)=1$ then $q_{p,a,d}(d)=1$ and $d \notin S_{p,a}$. 
The set $T_{p,a}$ is a filter. 

We consider the set of pre-points constructed in this way. 
Define
\begin{equation*}
    P = \{ (p, b_{p,a}, [T_{p,a}] ) \mid a \in A, p:A \longrightarrow \twobb , p(\algdiamond a) = 0 \}
\end{equation*}
and consider $(p, b_{p,a}, [T_{p,a}] ) \in P$.

For any $p$, $a$ and $c$ with $p(\algdiamond a)=0$ and $p(\algdiamond c)=1$, the frame character $q_{p,a,c}$ is such that $q_{p,a,c} (b_{p,a}) = 0$ and $F_p \subseteq \charop (q_{p,a,c})$. Therefore, for any pre-point of the form $(q_{p,a,c},e,G)$, the relation $(p, b_{p,a}, [T_{p,a}] ) R_A (q_{p,a,c},e,[G])$ holds.

For any $p$, $a$ and $c$ with $p(\algdiamond a)=0$ and $p(\algdiamond c)=1$, there are $e=b_{q_{p,a,c},\bot}$ and $G=T_{q_{p,a,c},\bot}$ such that $(q_{p,a,c}, e,[G]) \in P$  because $q_{p,a,c} (\algdiamond \bot) = 0$. 

If $d\nleq b_{p,a}$, then $d \notin S_{p,a}$, so there is some $c$ such that $p(\algdiamond c)=1$  and $q_{p,a,c}(d)=1$. There is therefore $(q_{p,a,c},e,[G]) \in P$ such that the relation $(p, b_{p,a}, [T_{p,a}] ) R_A (q_{p,a,c},e,[G])$ holds, and so the set $P$ is closed under condition (\ref{condition:pointsDiamond}). 

If $d \notin T_{p,a}$, there is a frame character $c\in A$ with $p(\algdiamond c)=1$ and $q_{p,a,c}(d)=0$. This gives $(q_{p,a,c},e,G) \in P$ with $(p, b_{p,a}, T_{p,a} ) R_A (q_{p,a,c},e,G)$, and so closure under condition (\ref{condition:pointsBox}).
\end{proof}

Lemmas~\ref{result:relsplPoints} and \ref{result:canonicalTripleIsAPoint} show that the axioms making a modal frame lower or convex guarantee the existence of canonical modal frame points, and show that the relevant constructions (for lower-semicontinuous and convex spaces, respectively) simplify to constructions with modal points that are simply frame characters. In particular, all lower modal frames that are modally spectral are replete. 

\begin{lemma}
\label{result:relsplPoints}
    If $A$ is modally spectral and lower, then for every frame character $p$, 
    \begin{equation*}
        \label{equation:pdap0}
        p(\algdiamond a_p) = 0 .
    \end{equation*}
    If $\Fcal$ is the functor constructing objects of $\Dcal = \relspl$, then 
    $(p,a_p)$ is a point. Moreover, the set of points constructed from $A$ is in one-one correspondence with the set of frame characters: if $(p,a)$ is a pre-point then $a=a_p$. Representing points in this way, the relation $p R_A q$ holds if and only  (\ref{condition:relationPairsDiamond}) 
and 
(\ref{condition:relationPairsBox}) 
both hold.  
\end{lemma}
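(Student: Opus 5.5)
The argument splits into two parts. First I show that every frame character is replete, using the lower axiom together with continuity of $\algdiamond$. Then I check that the set of all canonical pre-points $(p,a_p)$ already satisfies the modal point condition (\ref{condition:pointsDiamond}), so it is contained in the largest such set $\Pbb_A$.

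\emph{Repleteness.} Consider the set $D = \{ c \in A \mid p(\algdiamond c) = 0\}$. It contains $\bot$ by (\ref{m4}), and it is downward closed because $\algdiamond$ is monotone. It is also closed under binary joins: if $c,d \in D$, then the lower axiom (\ref{axiom:diamondpreservesbinaryjoin}) gives $p(\algdiamond(c\vee d)) \leq p(\algdiamond c) \vee p(\algdiamond d) = 0$. So $D$ is directed with $\bigvee D = a_p$. Since $\algdiamond$ is continuous, $\algdiamond a_p = \bigvee_{c\in D} \algdiamond c$. Since $p$ is a frame character, $p(\algdiamond a_p) = \bigvee_{c \in D} p(\algdiamond c) = 0$. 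This is the main step, and it is exactly where the lower axiom is needed.

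\emph{$(p,a_p)$ is a point.} Repleteness means $(p,a_p)$ satisfies (\ref{condition:pairs}), and (\ref{condition:triples-lsc}) holds trivially since every $c\in D$ lies below $a_p$. Conversely, by Lemma~\ref{result:prepointProperties} part 3, any pre-point $(p,a)$ satisfying both conditions has $a \le a_p$ and $a_p \le a$, so $a = a_p$. This gives the one-one correspondence with frame characters, provided every canonical pre-point is a point.

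To see that it is, let $P$ be the set of all $(p,a_p)$ and verify (\ref{condition:pointsDiamond}) on $P$. Suppose $c \nleq a_p$. Then $c\notin D$, so $p(\algdiamond c)=1$. Lemma~\ref{result:relatedCharacterExists}, applied with $a=a_p$ and $b=c$, gives a frame character $q$ with $q(a_p)=0$, $q(c)=1$ and $p\circ\algbox \le q$. Since $q$ is replete, $(q,a_q)\in P$. Moreover $(p,a_p) R_A (q,a_q)$, because (\ref{condition:relationPairsDiamond}) and (\ref{condition:relationPairsBox}) hold by construction. Thus $P$ is closed under the point condition, so $P \subseteq \Pbb_A$, and combined with the uniqueness above, $\Pbb_A = P$.

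\emph{The relation.} Writing points as $p$, the relation $p R_A q$ is by definition the conjunction of (\ref{condition:relationPairsDiamond}) and (\ref{condition:relationPairsBox}) for the pre-points $(p,a_p)$ and $(q,a_q)$. Lemma~\ref{result:relationRelationships} part 2 lets us restate (\ref{condition:relationPairsDiamond}) equivalently as (\ref{condition:relationPairsLSC}) if desired.
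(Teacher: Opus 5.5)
Your proposal is correct and follows essentially the same route as the paper. Repleteness comes from $\algdiamond$ preserving the join that defines $a_p$, using continuity together with the lower axiom (\ref{axiom:diamondpreservesbinaryjoin}) and (\ref{m4}), which you package as directedness of $D$. Uniqueness $a=a_p$ follows from the two pre-point conditions, and the set of canonical pairs is closed under (\ref{condition:pointsDiamond}) via Lemma~\ref{result:relatedCharacterExists}, with the witness $(q,a_q)$ lying in $P$ because $q$ is itself replete.
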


\begin{proof}
To see (\ref{equation:pdap0}), note that the operator $\algdiamond$ preserves all joins: it preserves directed joins because it is continuous, it preserves binary joins because is lower (\ref{axiom:diamondpreservesbinaryjoin}), and it preserves the empty join (\ref{m4}). If $(p,a)$ is a pre-point, then by definition of $a_p$, we have $a \leq a_p$, and by (\ref{condition:triples-lsc}), $a_p \leq a$.

We consider the set $P$ of all pairs $(q,a_q)$ where $q$ is any frame character. 
We need to check that the point condition (\ref{condition:pointsDiamond}) holds. Suppose $b \nleq a_p$. Then $p(\algdiamond a_p) = 0$ and $p (\algdiamond b) = 1$. Applying Lemma~\ref{result:relatedCharacterExists} gives $q$ such that $q(a_p) = 0$, $p \circ \algbox \leq q$ and $q(b)=1$. We then have $(p,a) R_A (q,a_q) \in P$. 

\end{proof}

\begin{lemma}
\label{result:canonicalTripleIsAPoint}
If $A$ is a convex modally spectral frame the functor $\Fcal$ is the one constructing objects in $\Dcal = \relspqc$, and $p$ is a frame character, then 
$(p,a_p,F_p)$ is a point (satisfying (\ref{condition:pairs}), (\ref{condition:triples}), (\ref{condition:triples-lsc}), (\ref{condition:triples-usc}), (\ref{condition:pointsDiamond}) and (\ref{condition:pointsBox})).

All points have the form $(p, a_p, F_p)$ for some frame character $p$, and for any other character $q$,
$p R_A q$ if and only if (\ref{condition:relationPairsDiamond}) and (\ref{condition:relationPairsBox}) both hold.
\end{lemma}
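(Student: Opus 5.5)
The plan is to take $P$ to be the set of all canonical triples $(p,a_p,F_p)$, with $p$ ranging over frame characters of $A$, and to show that $P$ is closed under the point conditions. The pre-point conditions come first. Since a convex frame is lower, Lemma~\ref{result:relsplPoints} gives $p(\algdiamond a_p)=0$, which is (\ref{condition:pairs}). Condition (\ref{condition:triples-lsc}) holds by the definition of $a_p$, and (\ref{condition:triples}) and (\ref{condition:triples-usc}) hold trivially with $F=F_p$ (Lemma~\ref{result:prepointProperties}, part 4). Conversely, Lemma~\ref{result:prepointProperties} (parts 3 and 4), together with repleteness, shows that every pre-point is of this form. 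Hence $\Pbb_A\subseteq P$, and once $P$ is shown closed under the point conditions, maximality of $\Pbb_A$ gives $\Pbb_A=P$. For canonical triples, part~\ref{result:tripleRelationReduced} of Lemma~\ref{result:relationRelationships} gives that (\ref{condition:relationTriplesBox}) is equivalent to (\ref{condition:relationPairsBox}). The stated description of $R_A$ therefore follows once we know that points are canonical.

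Condition (\ref{condition:pointsDiamond}) is handled exactly as in Lemma~\ref{result:relsplPoints}. Given $c\nleq a_p$, we have $p(\algdiamond c)=1$ and $p(\algdiamond a_p)=0$. Lemma~\ref{result:relatedCharacterExists} then provides $q$ with $q(a_p)=0$, $q(c)=1$ and $p\circ\algbox\leq q$. This means $F_p\subseteq\charop(q)$, so $(p,a_p,F_p)R_A(q,a_q,F_q)$.

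The main obstacle is condition (\ref{condition:pointsBox}). Given $c\notin F_p$, that is $p(\algbox c)=0$, we need a character $q$ with $F_p\subseteq\charop(q)$, $q(a_p)=0$ and $q(c)=0$. The plan is to separate the Scott-open filter $F_p$ (Scott-open because $\algbox$ is continuous) from the element $a_p\vee c$ by a completely prime filter, using the Prime Ideal Theorem argument from the Background subsection. For this it suffices that $a_p\vee c\notin F_p$. Convexity enters here through Lemma~\ref{result:FpFilter}: $c\notin F_p$ and $p(\algdiamond a_p)=0$ imply $a_p\vee c\notin F_p$. This is exactly the point where axiom (\ref{axiom:lscboxveemixed}) is needed, applied to $a_p$, which is a possibly infinite join. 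That is legitimate because axiom (\ref{axiom:lscboxveemixed}) holds for all elements of the frame, and $p(\algdiamond a_p)=0$ is already established by repleteness. The resulting completely prime filter $\charop(q)$ contains $F_p$ and omits $a_p\vee c$, hence omits both $a_p$ and $c$. So $(p,a_p,F_p)R_A(q,a_q,F_q)$ with $q(c)=0$, and the witness is again canonical, hence lies in $P$.

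Finally, the argument confirms that $P$ is closed under both point conditions. So every canonical triple is a point, all points are canonical, and the relation reduces to (\ref{condition:relationPairsDiamond}) together with (\ref{condition:relationPairsBox}).
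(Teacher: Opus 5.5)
Your proposal is correct and follows the paper's proof essentially step by step. You take the same set $P$ of canonical triples, get repleteness from lowerness as in Lemma~\ref{result:relsplPoints}, and obtain (\ref{condition:pointsDiamond}) from Lemma~\ref{result:relatedCharacterExists}. For (\ref{condition:pointsBox}) you use convexity to get $a_p\vee c\notin F_p$ and then separate this element from the Scott-open filter $F_p$ by a completely prime filter, just as the paper does; citing Lemma~\ref{result:FpFilter} only packages the same use of axiom (\ref{axiom:lscboxveemixed}).
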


\begin{proof}
Since $A$ is lower, we have $p(\algdiamond a_p) = 0$, arguing as in Lemma~\ref{result:relsplPoints}. All pre-points $(p,a,F)$ satisfying (\ref{condition:pairs}) and condition (\ref{condition:triples-lsc}) must have $a=a_p$, and all satisfying (\ref{condition:triples}) and (\ref{condition:triples-usc}) must have $F = F_p$. 

Let 
\begin{equation*}
P = \{ (p,a_p,F_p) \mid p \mbox{ is a frame character of } A \} .
\end{equation*}
If $c\nleq a_p$ for any frame character $p$, then Lemma~\ref{result:relatedCharacterExists} gives the character $q$ required, so that $(p,a_p,F_p) R_A (q,a_q,F_q)$ and $q(c)=1$. Therefore $P$ is closed under (\ref{condition:pointsDiamond}).

Suppose $c \notin F_P$. 
Let $I = \{ a \in A \mid p(\algdiamond a) = 0 \}$. 
Then $I = \downset a_p$. 
We have 
$\algbox( c \vee a_p) \leq \algbox c \vee \algdiamond a_p$ using axiom (\ref{axiom:lscboxveemixed}) since the frame is convex. We therefore have $p(\algbox( c \vee a_p)) = 0$ as $p(\algbox c)=0$ and 
$p(\algdiamond a_p)=0$. 
So $c \vee a_p$ not contained in the Scott-open filter $F_p$. Therefore there is a completely prime filter $Q$ that contains $F_p$ but not $c \vee a_p$. The frame character $q = \charop(Q)$ has $q(c) = 0$, $q(a_p)=0$ and $F_p \subseteq Q$, so that $(p,a_p,F_p) R_A (q,a_q,F_q)$. Therefore $P$ is closed under (\ref{condition:pointsBox}). 
\end{proof}

Combining the results above, we see that the functor $\Fcal$ constructing lower-semicontinuous relational spaces becomes a functor that constructs continuous relational spaces upon restriction of the domain to convex modally spectral frames. Moreover, the functor maps morphisms between such frames to morphisms that satisfy the q-morphism property. 

\begin{lemma}
\label{result:convexImpliesContinuous}
    Suppose the point construction is $\Fcal: \lmfrmds \longrightarrow \relspl$. If $A$ is modally spectral and convex, then $\Fcal(A)$ is continuous. If $f:A \longrightarrow B$ is a morphism between convex modally spectral frames, then $\Fcal (f)$ is a continuous pq-morphism.
\end{lemma}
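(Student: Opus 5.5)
The plan has two parts: upper-semicontinuity of $\Fcal(A)$, and the q-morphism property of $\Fcal(f)$.

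\textbf{Upper-semicontinuity of $\Fcal(A)$.} By Lemma~\ref{result:relsplPoints}, the points of $\Fcal(A)$ are the pairs $(p,a_p)$, with $p$ ranging over all frame characters. Under this identification, $p R_A q$ holds iff $q(a_p)=0$ and $p\circ\algbox \leq q$. By Lemma~\ref{result:lscImpliesDiamoindStrict} the space is already lower-semicontinuous. Since the $\phi_A(c)$ are exactly the opens, it suffices to show $\algboxclass\phi_A(c)=\phi_A(\algbox c)$ for each $c\in A$.
\begin{itemize}
\item For $\supseteq$: if $p(\algbox c)=1$ and $pR_Aq$, then $q(c)\geq p(\algbox c)=1$.
\item For $\subseteq$: suppose $p(\algbox c)=0$. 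Reuse the argument from the proof of Lemma~\ref{result:canonicalTripleIsAPoint}. Axiom (\ref{axiom:lscboxveemixed}) gives $\algbox(c\vee a_p)\leq\algbox c\vee\algdiamond a_p$, so $p(\algbox(c\vee a_p))=0$. Hence $c\vee a_p\notin F_p$.
\item The filter $F_p$ is Scott-open by continuity of $\algbox$. So the prime-ideal argument from the Background yields a completely prime filter $Q\supseteq F_p$ with $c\vee a_p\notin Q$.
\item Its character $q$ satisfies $p\circ\algbox\leq q$, $q(a_p)=0$ and $q(c)=0$. Thus $pR_Aq$ and $q\notin\phi_A(c)$, so $p\notin\algboxclass\phi_A(c)$.
\end{itemize}
This gives equality. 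In particular $\algboxclass$ of every open set is open, so $\Fcal(A)$ is continuous.

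\textbf{The q-morphism property of $\Fcal(f)$.} Here $\Fcal(f)(q)=q\circ f$ by Corollary~\ref{result:adjointExplicit}, using canonical pairs. It is already a continuous p-morphism by the adjunction, so only (\ref{qmorph}) needs proof. Reformulate it via $\Omega$: for a relational morphism into an upper-semicontinuous space with $U=\phi_A(c)$, (\ref{qmorph}) amounts to
\[
  \algboxclass\bigl(\Fcal(f)\invim\phi_A(c)\bigr)\subseteq\Fcal(f)\invim\bigl(\algboxclass\phi_A(c)\bigr).
\]
By naturality $\Fcal(f)\invim\circ\phi_A=\phi_B\circ f$, and by the first part applied to $B$ and to $A$, this becomes
\[
  \phi_B(\algbox f(c))\subseteq\phi_B(f(\algbox c)).
\]
Concretely: given a character $q$ of $B$ and $r$ of $A$ with $(q\circ f)R_A r$ and $r(c)=0$, we get $q(f(\algbox c))=0$. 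We want $q(\algbox f(c))=0$, since then the first part applied to $B$ supplies $q'$ with $qR_Bq'$ and $q'(f(c))=0$, as required.

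\textbf{Main obstacle.} Everything reduces to the inequality $\algbox\circ f\leq f\circ\algbox$ evaluated at points, i.e.\ box-strictness of $f$ up to spatiality. Morphisms of $\lmfrmds$ are only assumed diamond-strict, so this is the hard step.

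My first attempt would exploit convexity together with diamond-strictness. Write $b=a_{q\circ f}$ and note that $f(b)\leq a_q$, because $q(\algdiamond f(b))=q(f(\algdiamond b))=0$. Then apply (\ref{axiom:lscboxveemixed}) in $B$ to $f(c)\vee f(b)=f(c\vee b)$, trying to transport the failure of $\algbox(c\vee b)$ at $q\circ f$ across $f$. This failure holds because $r$ witnesses $c\vee b\notin F_{q\circ f}$. To transport it I would work with compact elements via Proposition~\ref{result:spec2distrib}.

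If this transport does not go through, the fallback is to observe that for the morphisms actually arising (strict ones, as in $\cmfrms$) box-strictness holds by hypothesis, and the displayed inclusion is then immediate.
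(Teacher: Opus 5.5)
Your argument for the first claim is correct, and it is in substance what the paper's one-line justification ("combining the results above") amounts to. The $\relspl$-points $(p,a_p)$ are the $\relspqc$-points $(p,a_p,F_p)$ of Lemma~\ref{result:canonicalTripleIsAPoint}, with the same relation, and your convexity step is exactly the one used there for (\ref{condition:pointsBox}). Your reduction for morphisms is also right, and it is in fact an equivalence. Since $B$ is spatial and every character of $B$ is a point, $\Fcal(f)$ satisfies (\ref{qmorph}) if and only if $\algbox\circ f\leq f\circ\algbox$, i.e.\ if and only if $f$ is box-strict.

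The gap is that your ``first attempt'' cannot succeed. Convexity plus diamond-strictness does not force box-strictness, so the second claim is false for general morphisms of $\lmfrmds$. Here is a counterexample.
\begin{itemize}
\item Let $A$ be the chain $0<m<1$ with $\algdiamond=\mathrm{id}$, $\algbox 0=\algbox m=0$ and $\algbox 1=1$. This is $\Omega$ of the Sierpi\'nski space $\{0,1\}$, with $\{1\}$ open, carrying the continuous relation $1S1$, $1S0$, $0S0$.
\item Let $B=\twobb$ with $\algbox=\algdiamond=\mathrm{id}$, and let $f(0)=0$, $f(m)=f(1)=1$.
\item Both frames are finite, hence modally spectral, and both are convex. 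The map $f$ is a diamond-strict modal frame morphism, but $f(\algbox m)=0<1=\algbox f(m)$.
\item The characters of $A$ are $p_1$ (with $p_1(m)=1$) and $p_0$. We have $a_{p_1}=0$ and $p_1(\algbox m)=0$, so $p_1R_Ap_0$.
\item The unique character $r$ of $B$ is related only to itself, and $\Fcal(f)(r)=r\circ f=p_1$.
\item Thus $\Fcal(f)(r)\,R_A\,p_0\notin\phi_A(m)$, while the only $R_B$-successor of $r$ is mapped into $\phi_A(m)$. So (\ref{qmorph}) fails.
\end{itemize}

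Your ``fallback'' is therefore not a fallback but the only correct version of the statement. The second claim holds exactly when $f$ is box-strict (hence strict, as in $\cmfrms$), and under that hypothesis your displayed inclusion finishes the proof immediately. You should state this restriction explicitly, together with the counterexample showing it is necessary, rather than leaving box-strictness as an open ``hard step''. The paper's own justification gives no argument that would cover merely diamond-strict morphisms either.
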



\begin{lemma}
\label{result:spatialSep}
Suppose that $A$ is modally spectral. 
Consider cases of the functor $\Fcal: \Ccal \longrightarrow \Dcal$ that constructs a relational space from a frame with operators as in Table~\ref{table:spatiality}. In these cases, the unit $\phi_A$ is one-one.
\begin{table}[ht]
\begin{center}
\begin{tabular}{|l|l|l|}
\hline
Spaces,  $\Dcal$    & 
Frames, $\Ccal$  &
Points \\ 
\hline
$\relsp$   & $\mfrm$ & $(p,a)$ \\
$\relspl$   & $\lmfrmds$ & $(p,a_p)$\\
$\relspq$  & $\mfrm$ & $(p,a,F)$ \\ 
$\relspqc$ & $\cmfrms$ & $(p,a_p,F_p)$\\
$\eqspqc$  & $\ceqfrm$ & $(p,a_p,F_p)$ \\
\hline
\end{tabular}
\caption{Cases making $\phi_A$ one-one}
\label{table:spatiality}
\end{center}
\end{table} 
\end{lemma}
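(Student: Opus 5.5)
To show $\phi_A$ is one-one it suffices to show that if $c \nleq d$ in $A$ then $\phi_A(c) \neq \phi_A(d)$. That is, we want a modal frame point whose frame-character component $p$ has $p(c)=1$ and $p(d)=0$. Since $\phi_A = \pi\invim \circ \eta_A$, this reduces to showing that the projection $\pi$ from $\Pbb_A$ to $\pt(A)$ is surjective, i.e.\ that every frame character $p$ of $A$ is the first component of some point. Indeed, $A$ is spectral, hence spatial, so $\eta_A$ is injective and there is a character $p$ with $p(c)=1$ and $p(d)=0$. If $\pi$ is surjective, then some point over $p$ lies in $\phi_A(c)\setminus\phi_A(d)$.

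Surjectivity of $\pi$ will then be checked case by case along Table~\ref{table:spatiality}.

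\begin{itemize}
\item For $\relsp$ and $\relspq$: given $p$, take $a=\bot$. Condition (\ref{condition:pairs}) holds because $p(\algdiamond\bot)=0$ by (\ref{m4}). Lemma~\ref{result:pairprepoint2Point} then supplies $b \geq \bot$, and in the triple case also a filter $F$, such that $(p,b,[F]) \in \Pbb_A$.
\item For $\relspl$ with $A$ lower: Lemma~\ref{result:relsplPoints} states directly that $(p,a_p)$ is a point for every frame character $p$.
\item For $\relspqc$ with $A$ convex: Lemma~\ref{result:canonicalTripleIsAPoint} states that $(p,a_p,F_p)$ is a point for every $p$.
\item For the equivalence case with $A$ a convex equivalence frame: the point conditions are the same as for $\relspqc$ (Table~\ref{table:pre-points}), since pre-points and point conditions in the $\eqfrm$/$\eqspq$ row coincide with the $\relspqc$ row. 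So Lemma~\ref{result:canonicalTripleIsAPoint} applies verbatim and again gives $(p,a_p,F_p)\in\Pbb_A$.
\end{itemize}

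In each case we still need to observe that $\Pbb_A$, defined as the largest set of pre-points closed under the modal point conditions, contains the points produced. This holds because each cited lemma exhibits a set $P$ of pre-points closed under the relevant conditions. The union of all such closed sets is closed, so $P\subseteq\Pbb_A$.

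The main obstacle is bookkeeping rather than mathematics. We must check that each row of Table~\ref{table:spatiality} matches the hypotheses of an existence-of-points lemma. The most delicate row is the equivalence row: there we must confirm that the extra equivalence axioms impose no additional pre-point or point conditions, so that the $\relspqc$ argument carries over. Once surjectivity of $\pi$ is in hand, injectivity of $\phi_A$ follows immediately from spatiality of spectral frames.
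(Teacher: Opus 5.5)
Your proposal is correct and follows the paper's proof essentially step for step. Spatiality of the spectral frame supplies a character $p$ with $p(c)=1$ and $p(d)=0$. The same case-by-case appeal to Lemmas~\ref{result:pairprepoint2Point}, \ref{result:relsplPoints} and \ref{result:canonicalTripleIsAPoint} then shows that $p$ is the first component of some point, with the equivalence row inherited from the $\relspqc$ row.

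Your remark that a union of sets closed under the modal point conditions is again closed, so that each exhibited $P$ lies in $\Pbb_A$, makes precise a step the paper leaves implicit.
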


\begin{proof}
We show that if $b \nleq a$ for any $a,b \in A$, then there is a point $(p,[a],[F]) \in \Pbb_A$ such that $p(b)=1$ and $p(a) =0$, and so $\phi_A (b) \nsubseteq \phi_A (a)$. 

In all cases from the table, spatiality of the frame $A$ gives a suitable frame character $p$.

    In the case $\Ccal = \mfrm$ and $\Dcal = \relsp$, the result follows from the fact that $(p,\bot$) is a pre-point (Lemma~\ref{result:prepointProperties}) and from the extension of $(p,\bot)$ to a point $(p,c)$ (Lemma~\ref{result:pairprepoint2Point}). 

In the case $\Ccal = \lmfrmds$, so that  $A$ is lower, and $\Dcal = \relspl$, by Lemma~\ref{result:relsplPoints} we have that  $(p,a_p)$ is a point. 

If $\Ccal = \mfrm$ and 
$\Dcal = \relspq$ then the result follows from  Lemma~\ref{result:pairprepoint2Point}.

If $\Ccal = \cmfrms$, so $A$ is convex, and $\Dcal = \relspqc$. By Lemma~\ref{result:canonicalTripleIsAPoint}, $(p,a_p,F_p)$ is a point as required. The special case of convex equivalence frames and continuous equivalence spaces follows from this.
\end{proof}

\begin{lemma}
    Let $A$ be modally spectral and from one of the categories $\Ccal$ from Table~\ref{table:spatiality}, and let $\Fcal$ be the matching instance of the modal point construction.  If $b\nleq \algdiamond a$, then $\phi_A(b) \nsubseteq \algdiamond \phi_A(a)$. Consequently, $\phi_A$ is diamond-strict. 
\end{lemma}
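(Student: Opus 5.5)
To prove the statement, I will find a point lying in $\phi_A(b)$ but outside $\algdiamond \phi_A(a)$. Once that is done, the inclusion $\phi_A(\algdiamond a) \subseteq \algdiamond\phi_A(a)$ already holds because $\phi_A$ is a modal frame morphism. So if equality failed, there would be some $b$ with $\phi_A(b) \subseteq \algdiamond \phi_A(a)$ but $b \nleq \algdiamond a$; taking $b$ to be a suitable element (ultimately using $\phi_A$ one-one, Lemma~\ref{result:spatialSep}, and that $\algdiamond\phi_A(a)$ is open, hence equal to $\phi_A(b)$ for some $b$) gives a contradiction. Concretely: $\algdiamond \phi_A(a)$ is open, so it equals $\phi_A(b)$ for some $b$, and $\phi_A(\algdiamond a)\subseteq \phi_A(b)$ gives $\algdiamond a\leq b$ by injectivity. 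If $b\nleq \algdiamond a$, the first claim contradicts $\phi_A(b)=\algdiamond\phi_A(a)$. Hence $b=\algdiamond a$ and diamond-strictness follows.

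For the main claim, suppose $b \nleq \algdiamond a$. Since $A$ is spectral, hence spatial, there is a frame character $p$ with $p(b)=1$ and $p(\algdiamond a)=0$. Then $a$ is an admissible second component: condition (\ref{condition:pairs}) holds. I extend $p$ to a point case by case, as in the proof of Lemma~\ref{result:spatialSep}.

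In the cases $\relsp$ and $\relspq$, Lemma~\ref{result:pairprepoint2Point} gives a point $(p,b',[F])$ with $b'\geq a$. In the cases $\relspl$, $\relspqc$ and $\eqspqc$, Lemmas~\ref{result:relsplPoints} and \ref{result:canonicalTripleIsAPoint} give the canonical point $(p,a_p,[F_p])$. Here $a\leq a_p$ by definition of $a_p$, since $p(\algdiamond a)=0$. In every case the point $x$ has second component $a'\geq a$ and lies in $\phi_A(b)$.

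Next I must show $x\notin\algdiamond\phi_A(a)$. It suffices to show $x\notin\algdiamondclass\phi_A(a)$, since the interior is smaller. By Lemma~\ref{tripmot}(1), $a\leq a'$ gives $R_A\dirim(x)\cap\phi_A(a)=\emptyset$. Directly: any $y=(q,\dots)$ with $x R_A y$ has $q(a')=0$, so $q(a)=0$.

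The main obstacle is ensuring the extension step really yields a second component above $a$ in each table case. The pair and triple cases rely on the $b_{p,a}\ge a$ property in Lemma~\ref{result:pairprepoint2Point}, which I should verify from the definition of $S_{p,a}$: every $q_{p,a,c}$ kills $a$, so $a\in S_{p,a}$. The rest is routine.
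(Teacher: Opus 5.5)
Your proposal is correct and follows the paper's proof essentially step for step. You separate $b$ from $\algdiamond a$ by a character $p$, extend it to a point whose second component lies above $a$ (via Lemma~\ref{result:pairprepoint2Point}, or the canonical point $(p,a_p,[F_p])$), and then use condition (\ref{condition:relationPairsDiamond}) to exclude that point from $\algdiamondclass \phi_A(a)$. The only difference is in the strictness step, where your appeal to injectivity of $\phi_A$ is harmless but unnecessary: the paper notes that $\phi_A(b)\subseteq\algdiamond\phi_A(a)$ forces $b\leq\algdiamond a$, hence $\phi_A(b)\subseteq\phi_A(\algdiamond a)$ by monotonicity alone, and then takes $b$ with $\phi_A(b)=\algdiamond\phi_A(a)$.
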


\begin{proof}
First, the diamond strictness consequence.   If $\phi_A (b) \subseteq \algdiamond \phi_A(a)$, then $b \leq \algdiamond a$, and so $\phi_A (b) \subseteq \phi_A (\algdiamond a)$. Since, for any $a$, it is the case that $\phi_A (b) = \algdiamond \phi_A(a)$ for some $b$, it is the also the case that $\algdiamond \phi_A (a) \subseteq \phi_A (\algdiamond a)$. 

For the initial claim: 
\begin{itemize}
    \item 
    Suppose $\Ccal = \mfrm$ and $\Dcal = \relsp$. Supposing $b \nleq \algdiamond a$, Lemma~\ref{result:spatialSep} gives a point $(p,d)$ such that $p(b) = 1$ and $p(\algdiamond a)=0$. 
 Lemma~\ref{result:pairprepoint2Point}  then gives a point $(p,c)$ with $a \leq c$. If $(p,c) R_A (q,d)$ for another point $(q,d)$, then $q(c) = 0$ and so $q(a)=0$. Therefore $(p,c) \notin \algdiamond (\phi_A (a))$ but $(p,c) \in \phi_A (b)$. 

    \item Suppose $\Ccal = \mfrm$ and $\Dcal = \relspq$. If $b\nleq \algdiamond a$, then by Lemma~\ref{result:spatialSep}, there is a point $(p,d,H)$ with $p(b)=1$ and $p(\algdiamond a)=0$. 
    By the final part of Lemma~\ref{result:pairprepoint2Point}, there is a filter $F$ and an element $c \geq a$ such that $(p,c,F)$ is a point. Then $(p,c,F)$ is in $\phi_A (b)$ but not $\phi_A (\algdiamond a)$, arguing as in the first case above.

    \item In cases where $\Fcal$ is such that points are all required to satisfy (\ref{condition:triples-lsc}), $\phi_A$ is already diamond-strict, by Lemma~\ref{result:lscImpliesDiamoindStrict}.
    Supposing $b\nleq \algdiamond a$, Lemma~\ref{result:spatialSep} combined with Lemma~\ref{result:relsplPoints} or Lemma~\ref{result:canonicalTripleIsAPoint}, as appropriate, gives a point $(p,a_p,[F_p])$ with $p(b) = 1$ and then $p(\algdiamond a)=0$. If it is the case that $(p,a_p,[F_p]) R_A (q,a_q,[F_q])$ for another point $(q,a_q,[F_q])$, then $q(a_p) = 0$ and so $q \leq p \circ \algdiamond$. Therefore $q(a)=0$, and so $(p,a_p,[F_p]) \notin \algdiamond (\phi_A (a))$ but $(p,a_p,[F_p]) \in \phi_A (b)$.
\end{itemize}
\end{proof}


\begin{lemma}
Let $\Fcal$ be one of the point constructions associated with Table~\ref{table:modalSpatiality}. 
Let $A$ be modally spectral and from one of the categories of frames in the table. 
If $b\nleq \algbox a$, then $\phi_A(b) \nsubseteq \algbox \phi_A(a)$.
As a consequence, $\phi_A$ is box-strict. 
\end{lemma}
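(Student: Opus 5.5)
The plan mirrors the preceding lemma for $\algdiamond$. First the consequence. Suppose the separation claim holds. Then $\phi_A(b) \subseteq \algbox \phi_A(a)$ forces $b \leq \algbox a$, hence $\phi_A(b) \subseteq \phi_A(\algbox a)$. Every open of $\Ofrak_A$ has the form $\phi_A(b)$, and $\algbox\phi_A(a)$ is open, so taking $b$ with $\phi_A(b) = \algbox\phi_A(a)$ gives $\algbox \phi_A(a) \subseteq \phi_A(\algbox a)$. Together with the lax preservation (\ref{equation:phiBoxLaxPres}) already established, this yields box-strictness.

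For the separation claim, assume $b \nleq \algbox a$. Lemma~\ref{result:spatialSep} (really just spatiality of the spectral frame $A$) gives a frame character $p$ with $p(b)=1$ and $p(\algbox a)=0$, so $a \notin F_p$. It then suffices to produce a point $(p,c,F) \in \Pbb_A$ with $a \notin F$. Such a point lies in $\phi_A(b)$. By the point condition (\ref{condition:pointsBox}) it is $R_A$-related to some point $(q,e,G)$ with $q(a)=0$. So it is not in $\algboxclass\phi_A(a)$, and a fortiori not in its interior $\algbox\phi_A(a)$. The proof therefore splits by case according to how points are formed; all cases in the table are presumably those where points are triples.

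In the canonical cases ($\relspqc$, and equivalence spaces as a special case), Lemma~\ref{result:canonicalTripleIsAPoint} gives that $(p,a_p,F_p)$ is a point, and $a \notin F_p$ already. Alternatively, this case follows at once from Lemma~\ref{result:uscTripleImplyBoxStrict}, since condition (\ref{condition:triples-usc}) holds.

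The real work, and the main obstacle, is the case $\Dcal = \relspq$. Here the point $(p, b_{p,\bot}, T_{p,\bot})$ supplied by Lemma~\ref{result:pairprepoint2Point} may have $a \in T_{p,\bot}$, so the construction must be enlarged. The steps I would follow are these.

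First, $F_p$ is a Scott-open filter: it is a filter by Lemma~\ref{result:FpFilter} and Scott-open by continuity of $\algbox$. Since $a \notin F_p$, the Prime Ideal Theorem gives a character $q_0$ with $F_p \subseteq \charop(q_0)$ and $q_0(a)=0$.

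Second, add $q_0$ to the family $\{q_{p,\bot,c}\}$ and recompute the two components. Let $e$ be the join of all $d$ killed by $q_0$ and by every $q_{p,\bot,c}$ with $p(\algdiamond c)=1$. This set is closed under binary joins, hence directed.

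Third, check that $p(\algdiamond e)=0$. If $p(\algdiamond e)=1$, continuity of $\algdiamond$ gives some such $d$ with $p(\algdiamond d)=1$. Then $q_{p,\bot,d}(d)=1$, contradicting that $d$ is killed by $q_{p,\bot,d}$.

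Fourth, set $F = T_{p,\bot} \cap \charop(q_0)$. This satisfies $F_p \subseteq F$, which is condition (\ref{condition:triples}), and $a \notin F$.

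Finally, let $P' = P \cup \{(p,e,F)\}$, with $P$ as in Lemma~\ref{result:pairprepoint2Point}, and verify that $P'$ is closed under (\ref{condition:pointsDiamond}) and (\ref{condition:pointsBox}). The required witnesses are the points $(q_{p,\bot,c}, b_{q_{p,\bot,c},\bot}, T_{q_{p,\bot,c},\bot})$ and $(q_0, b_{q_0,\bot}, T_{q_0,\bot})$, all of which lie in $P$. They are $R_A$-related to $(p,e,F)$ because each of these characters vanishes on $e$ and contains $F$. Since $\Pbb_A$ is the largest set closed under the point conditions, $(p,e,F) \in \Pbb_A$, which is the point needed.
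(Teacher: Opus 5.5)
For the triple-based rows your argument is correct and is essentially the paper's. The derivation of box-strictness from the separation claim matches, as does the canonical case $\relspqc$ (with $\eqspqc$ included). The $\relspq$ case also matches: you choose $q_0$ with $F_p\subseteq\charop(q_0)$ and $q_0(a)=0$, using Scott-openness of $F_p$, then shrink the second component to what $q_0$ kills and intersect the filter with $\charop(q_0)$. Your $(p,e,F)$ is in fact exactly the paper's $(p,c',F')$ for the starting point $(p,b_{p,\bot},T_{p,\bot})$, because $S_{p,\bot}=\downset b_{p,\bot}$. The paper instead starts from an arbitrary point $(p,c,F)$ with $p(b)=1$. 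That gives $p(\algdiamond c')\leq p(\algdiamond c)=0$ by monotonicity and lets it reuse that point's own witnesses, rather than re-running the selection $q_{p,\bot,c}$ as you do.

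The gap is the first row of Table~\ref{table:modalSpatiality}: $\Dcal=\relsp$ with $\Ccal=\mfrm$, whose points are pairs $(p,a)$. Your remark that all rows presumably use triples is false, and your reduction fails for this row. A pair has no filter component, and (\ref{condition:pointsBox}) is not a point condition for $\relsp$. So you cannot get an $R_A$-successor killing $a$ from ``$a\notin F$''; you must exhibit one explicitly.

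Your own construction repairs this. Take the pair $(p,e)$. It is a pre-point since $p(\algdiamond e)=0$. It satisfies (\ref{condition:pointsDiamond}) using the pair witnesses $(q_{p,\bot,c},b_{q_{p,\bot,c},\bot})$ and $(q_0,b_{q_0,\bot})$, which are points by Lemma~\ref{result:pairprepoint2Point}. Each is $R_A$-related to $(p,e)$ because its character kills $e$ and satisfies (\ref{condition:relationPairsBox}); for $q_0$ that condition is just $F_p\subseteq\charop(q_0)$. Then $(p,e)\,R_A\,(q_0,b_{q_0,\bot})$ with $q_0(a)=0$ shows directly that $(p,e)\in\phi_A(b)$ but $(p,e)\notin\algboxclass\phi_A(a)$. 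The paper handles this row by citing \cite{hil00}, describing it as a simplified version of the $\relspq$ argument. As written, your proposal omits it.
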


\begin{proof}
First, the box-strictness. In the case of $\relspqc$ and $\cmfrms$, or $\eqspqc$ and $\ceqfrm$, we have already seen that $\phi_A$ is box-strict in Lemma~\ref{result:uscTripleImplyBoxStrict}. However, if, for any $a$, we have that $\phi_A(b) \subseteq \algbox \phi_A(a)$ implies $b \leq \algbox a$ for all $b$, then it also implies $\phi_A (b) \subseteq \phi_A (\algbox a)$. By taking 
$\phi_A (b) = \algbox \phi_A(a)$ we get 
$\algbox \phi_A (a) \subseteq \phi_A (\algbox a)$. 

Now suppose $b \nleq \algbox a$ and consider the various cases of the construction to show $\phi_A (b) \nsubseteq \algbox \phi_A(a)$.

    \begin{itemize}
        \item Take the case $\Ccal = \cmfrms$ and $\Dcal = \relspqc$. The case of $\ceqfrm$ and $\eqspqc$ is of course included in this. Suppose $b \nleq \algbox a$. By Lemma~\ref{result:spatialSep} there is a point $(p,a_p,F_p)$ such that $p(b) = 1$ and $p(\algbox a) =0$. We have $a \notin F_p$ so there is a point $(q,a_q,F_q)$ with $(p,a_p,F_p)R_A (q,a_q,F_q)$ and $q(a) = 0$, by (\ref{condition:pointsBox}). It follows that $(p,a_p,F_p)$ is in $\phi_A (b)$ but not $\algbox \phi_A (a)$.

        \item Suppose $\Ccal = \mfrm$ and $\Dcal = \relspq$.

        If $b \nleq \algbox a$, then by Lemma~\ref{result:spatialSep} there is a point $(p,c,F)$ with $p(b)=1$ and $p(\algbox a)=0$. We have $a \notin F_p$ and $F_p$ Scott-open. There is therefore a frame character $q_0$ such that 
        $F_p \subseteq \charop(q_0)$ and $q_0 (a) = 0$. It is then the case that $(q_0 , \bot , F_{q_0})$ is  a pre-point. From this follows the existence of a point $(q_0,e,G)$ by Lemma~\ref{result:pairprepoint2Point}.

        Let $c' = \bigvee \{ d \leq c \mid q_0(d)=0 \}$ and $F'  = F \cap \charop(q_0)$. 
        Now $p(\algdiamond c') \leq p(\algdiamond c) = 0$ and, moreover, $F'$ is a filter containing $F_p$. 
        Therefore the triple $(p,c',F')$ is a pre-point satisfying conditions (\ref{condition:pairs}) and (\ref{condition:triples}).
        
        We have $q_0 (c') = 0$ and $F' \subseteq \charop (q_0)$ and so 
        $(p,c',F') R_A (q_0,e,G)$. It
        then remains only to show that $(p,c',F')$ is a point. 
        
        Regarding the point condition (\ref{condition:pointsBox}), if $d \notin F'$, then $d \notin \charop(q_0)$ or  $d \notin F$. 
        If $d \notin \charop(q_0)$, then 
        $(q_0,e,G)$ provides the required witness.
        If $d \notin F$, then there is a point $(r,f,K)$ such that $(p,c,F) R_A (r,f,K)$ and $r(d) = 0$. 
        Then $r(c') \leq r(c) = 0$ and 
        $F' \subseteq F \subseteq \charop (r)$, so that $(p,c',F') R_A (r,f,K)$ and $(r,f,K)$ provides the witness.

        For the point condition (\ref{condition:pointsDiamond}), suppose $d \nleq c'$. Then $q_0 (d) = 1$ or $d \nleq c$. If $q_0 (d)=1$, then $(q_0,e,G)$ provides the required witness.
        If $d \nleq c$, then there is a point $(r,f,K)$ such that $(p,c,F) R_A (r,f,K)$ and $r(d) = 1$. 
        We then have $r(c')=0$ and $F' \subseteq \charop(r)$, so that 
        $(p,c',F') R_A (r,f,K)$ and $(r,f,K)$ provides the witness. 

        \item 
        The full proof in the case $\Ccal = \mfrm$ and $\Dcal = \relsp$ can be found in \cite{hil00}. The proof is essentially a slightly simplified version of the case above with $\Dcal = \relspq$, except the evident changes: (pre-)points are pairs, we don't need to check point condition (\ref{condition:pointsDiamond}) and, effectively, we are using $F=F'=F_p$ in the definition of $R_A$ for points $(p,c)$ and $(p,c')$ rather than, respectively, $(p,c,F)$ and $(p,c',F')$. 

    \end{itemize}
\end{proof}

\begin{theorem}
Consider the cases of $\Fcal: \Ccal \longrightarrow \Dcal$ for the pairs of categories shown in Table~\ref{table:modalSpatiality}. 
\begin{table}[ht]
\begin{center}
\begin{tabular}{|l|l|l|}
\hline
Space,  $\Dcal$    & 
Frames, $\Ccal$  &
Points \\ 
\hline
$\relsp$   & $\mfrm$ & $(p,a)$ \\
$\relspq$  & $\mfrm$ & $(p,a,F)$ \\ 
$\relspqc$ & $\cmfrms$ & $(p,a_p,F_p)$\\
$\eqspqc$  & $\ceqfrm$ & $(p,a_p,F_p)$ \\
\hline
\end{tabular}
\caption{Modally Spatial Situations}
\label{table:modalSpatiality}
\end{center}
\end{table} 
If $A$ is modally spectral, then $\phi_A$ is an isomorphism of $\Ccal$. 
\end{theorem}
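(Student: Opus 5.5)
The plan is to assemble the isomorphism from facts already established, since an isomorphism in each category $\Ccal$ of Table~\ref{table:modalSpatiality} amounts to a bijective frame morphism that is a modal frame morphism whose inverse is also a modal frame morphism of the right kind. First, $\phi_A$ is a modal frame morphism by the lemma following the definition of $\Fcal(A)$. Second, it is surjective onto $\Ofrak_A$ by construction, because the topology $\Ofrak_A$ is defined as the image of $\phi_A$. Third, $\phi_A$ is one-one by Lemma~\ref{result:spatialSep}: each pair of Table~\ref{table:modalSpatiality} also appears in Table~\ref{table:spatiality}. For the equivalence case, the points $(p,a_p,F_p)$ are covered by the convex case. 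Hence $\phi_A$ is a frame isomorphism $A \cong \Ofrak_A$, and in particular order-reflecting: $\phi_A(b)\subseteq\phi_A(a)$ iff $b\le a$.

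Next I would upgrade to strictness, which is what makes the inverse a modal frame morphism. I would apply the two preceding lemmas. The diamond lemma gives $\algdiamond\phi_A(a) = \phi_A(\algdiamond a)$. Its case analysis covers $\relsp$ and $\relspq$ with $\mfrm$; for $\relspqc$ and $\eqspqc$, Lemma~\ref{result:lscImpliesDiamoindStrict} applies, because points satisfy (\ref{condition:triples-lsc}). The box lemma gives $\algbox\phi_A(a)=\phi_A(\algbox a)$ for all four rows of Table~\ref{table:modalSpatiality}. So $\phi_A$ is strict.

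A bijective strict frame morphism has an inverse $\phi_A^{-1}$ that is a frame morphism, and for this inverse
\[
\phi_A^{-1}(\algbox U)=\algbox\phi_A^{-1}(U), \qquad \phi_A^{-1}(\algdiamond U)=\algdiamond\phi_A^{-1}(U).
\]
To verify these, write $U=\phi_A(a)$ and use strictness. The inverse is therefore a strict modal frame morphism, so it lies in $\mfrm$ as well as in $\cmfrms$ and $\ceqfrm$.

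The remaining point is that the target $\Omega\Fcal(A)$ is an object of $\Ccal$. For $\mfrm$ this follows from Lemma~\ref{results:Omega}. For the convex cases I would note that $\Fcal(A)$ is continuous by Lemmas~\ref{result:uscTripleImplyBoxStrict} and~\ref{result:lscImpliesDiamoindStrict}, so $\Omega\Fcal(A)$ is convex. Alternatively, convexity and the equivalence axioms transfer along the strict isomorphism directly, since they are inequalities in the modal frame signature.

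The main obstacle is bookkeeping rather than mathematics. The key check is that each row's point construction matches a case already treated in the one-one lemma and the two strictness lemmas; in particular, that the $\relsp$ box case, deferred to \cite{hil00}, is available. Once this is confirmed, the theorem is a direct combination of those lemmas.
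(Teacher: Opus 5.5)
Your proposal is correct and takes the same route as the paper: its proof combines injectivity (Lemma~\ref{result:spatialSep}), surjectivity by construction of $\Ofrak_A$, and the diamond- and box-strictness lemmas to conclude that $\phi_A$ is a strict frame isomorphism. Your extra checks are left implicit in the paper but are the right bookkeeping: that the inverse is strict, and that $\Omega\Fcal(A)$ lies in $\Ccal$ (for example by transferring the defining inequalities along the strict isomorphism).
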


\begin{proof}
The morphisms $\phi _A$ is a frame isomorphism since it is one-one and surjective. It is diamond-strict and box-strict. 
\end{proof}

Because of the coincidence recorded in Lemma~\ref{result:convexImpliesContinuous}, it is also the case that for the point construction $\Fcal : \lmfrmds \longrightarrow \relspl$, that if $A$ is both modally spectral and convex, then $\phi_A$ is box-strict and $A$ is modally spatial.

We recall that relations $R\subseteq X \times X$ such that all points $x \in X$ have images $R(x)$ tht are compact are an important case with many applications. The Vietoris space is normally constructed on compact subsets. If a relational space $(X,\Ocal,R)$ satisfies this restriction and $R$ is continuous, then all of the Vietoris axioms will apply, not merely (\ref{m1}), (\ref{m2}), (\ref{m3}), (\ref{m4}), (\ref{axiom:diamondpreservesbinaryjoin}), (\ref{axiom:lscboxveemixed}). It is therefore of interest to understand when the relation $R_A$ constructed from a modal frame $A$ has this property. Lemma~\ref{tripmot} gives the following characterization.
\begin{proposition}
For any modal frame point $(p,a,F)$ in the construction of a relational space in $\relspq$ from a modal frame in $\mfrm$, the image $R\dirim (p,a,F)$ is compact iff $F$ is Scott-open. 
\end{proposition}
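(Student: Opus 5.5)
The proof rests on Lemma~\ref{tripmot}(2): for a modal frame point $(p,a,F)$ of the $\relspq$ construction, which satisfies the point condition (\ref{condition:pointsBox}), we have $c \in F$ iff $R_A\dirim(p,a,F) \subseteq \phi_A(c)$. We combine this with two facts. First, every open set of $\Ofrak_A$ has the form $\phi_A(c)$. Second, $\phi_A$ is a frame morphism, so $\phi_A(\bigvee S) = \bigcup_{s\in S}\phi_A(s)$ and $\phi_A(s \vee t) = \phi_A(s)\cup\phi_A(t)$. Write $K = R_A\dirim(p,a,F)$.

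For ($\Rightarrow$), assume $K$ is compact. Let $S \subseteq A$ be directed with $\bigvee S \in F$. By Lemma~\ref{tripmot}, $K \subseteq \phi_A(\bigvee S) = \bigcup_{s \in S}\phi_A(s)$, so $\{\phi_A(s)\}_{s\in S}$ is an open cover of $K$. Compactness gives finitely many $s_1,\dots,s_n$ with $K \subseteq \phi_A(s_1)\cup\dots\cup\phi_A(s_n)$. Since $S$ is directed, some $s \in S$ lies above all the $s_i$; if $n=0$, we use that directed sets are nonempty. Then $K \subseteq \phi_A(s)$, and Lemma~\ref{tripmot} gives $s \in F$. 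Hence $F$ is Scott-open. We also need $F$ to be an upper set, which holds because it is a filter.

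For ($\Leftarrow$), assume $F$ is Scott-open. Let $\{U_i\}_{i\in I}$ be an open cover of $K$ and write $U_i = \phi_A(c_i)$. Let $S$ be the set of finite joins $c_{i_1}\vee\dots\vee c_{i_n}$, including $\bot$; then $S$ is directed and $\bigvee S = \bigvee_i c_i$. We have $K \subseteq \bigcup_i \phi_A(c_i) = \phi_A(\bigvee S)$, so $\bigvee S \in F$ by Lemma~\ref{tripmot}. Scott-openness yields some finite join $c_{i_1}\vee\dots\vee c_{i_n} \in F$. Applying Lemma~\ref{tripmot} again gives $K \subseteq \phi_A(c_{i_1})\cup\dots\cup\phi_A(c_{i_n})$, which is the required finite subcover.

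The only delicate point is that an arbitrary family of opens is indexed by elements of $A$ that need not form a directed set. This is why we pass to finite joins, using that $\phi_A$ preserves joins. The rest is a direct translation through Lemma~\ref{tripmot}, which is exactly where the point condition (\ref{condition:pointsBox}) of the $\relspq$ construction is used.
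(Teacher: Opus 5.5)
Your proof is correct and follows the paper's argument: the paper gives no separate proof, only the remark that Lemma~\ref{tripmot} (part 2: $c\in F$ iff $R_A\dirim(p,a,F)\subseteq\phi_A(c)$) yields the characterization, and your two directions, including passing to finite joins so the cover becomes directed, are the routine unpacking of that remark. Your explicit appeal to the convention that directed sets are nonempty is genuinely needed, since the paper's definition of directedness leaves it implicit and a point with no $R_A$-successors has empty (hence compact) image while $\bot\in F$.
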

In particular, for modally spectral frames, the set  
$R\dirim (p , a, F_p )$
is compact for all points 
$(p,a,F_p) \in \Pbb_A$. 
If we are using the construction of a continuous relational space, so that condition (\ref{condition:triples-usc}) holds of all points, then all points of the space have this property. By Lemma~\ref{result:convexImpliesContinuous}, this will also be the case if we are constructing a continuous relational space from a modally spectral convex frame, as would arise from ideal completion of a convex modal distributive lattice.

\bibliographystyle{plain}
\bibliography{refs}

\end{document}